\DeclareMathOperator{\supp}{supp}
\DeclareMathOperator{\dist}{dist}
\definecolor{myblue}{rgb}{0,0,0.6}
\begin{document}
\newcommand{\hs}[1]{\hspace{#1mm}}
\newcommand{\rd}{\mathrm{d}}
\newcommand{\R}{\mathbb{R}}
\newcommand{\N}{\mathbb{N}}
\newcommand{\Z}{\mathbb{Z}}
\newcommand{\C}{\mathbb{C}}
\newcommand{\ri}{{\mathrm{i}}}
\newcommand{\re}{{\mathrm{e}}}
\newcommand{\scrD}{\mathscr{D}}
\newcommand{\scrS}{\mathscr{S}}
\newcommand{\tH}{\widetilde{H}}

\newtheorem{thm}{Theorem}[section]
\newtheorem{lem}[thm]{Lemma}
\newtheorem{defn}[thm]{Definition}
\newtheorem{prop}[thm]{Proposition}
\newtheorem{cor}[thm]{Corollary}
\newtheorem{rem}[thm]{Remark}
\newtheorem{example}[thm]{Example}

\title{Interpolation of Hilbert and Sobolev Spaces:\\ Quantitative Estimates and Counterexamples}
\author{S.\ N.\ Chandler-Wilde\footnotemark[1], D.\ P.\ Hewett\footnotemark[2], A.\ Moiola\footnotemark[1]}

\renewcommand{\thefootnote}{\fnsymbol{footnote}}
\footnotetext[1]{Department of Mathematics and Statistics, University of Reading, Whiteknights PO Box 220, Reading RG6 6AX, UK. Email: \texttt{s.n.chandler-wilde@reading.ac.uk, a.moiola@reading.ac.uk}.}
\footnotetext[2]{Mathematical Institute, University of Oxford, Radcliffe Observatory Quarter, Woodstock Road, Oxford, OX2 6GG, UK. Email: \texttt{hewett@maths.ox.ac.uk}}

\maketitle

\begin{center}
{\em Dedicated to Vladimir Maz'ya, on the occasion of his 75th Birthday}
\end{center}

\begin{abstract}
This paper provides an overview of interpolation of Banach and Hilbert spaces, with a focus on establishing when equivalence of norms is in fact equality of norms in the key results of the theory. (In brief, our conclusion for the Hilbert space case is that, with the right normalisations, all the key results hold with equality of norms.) In the final section we apply the Hilbert space results to the Sobolev spaces $H^s(\Omega)$ and $\widetilde{H}^s(\Omega)$, for $s\in \mathbb{R}$ and an open $\Omega\subset \mathbb{R}^n$. We exhibit examples in one and two dimensions of sets $\Omega$ for which these scales of Sobolev spaces are not interpolation scales. In the cases when they {\em are} interpolation scales (in particular, if $\Omega$ is Lipschitz) we exhibit examples that show that, in general, the interpolation norm does not coincide with the intrinsic Sobolev norm and, in fact, the ratio of these two norms can be arbitrarily large.
\end{abstract}

\section{Introduction} \label{sec:intro}

This paper\footnotemark[3]\footnotetext[3]{This is the version of the paper as published in {\em Mathematika,} {\bf 61} (2015) 414--443. {\bf Please read this version alongside the Corrigendum to the published paper, which has been accepted for publication in {\em Mathematika} and is reproduced at the end of this document.}} provides in the first two sections a self-contained overview of the key results of the real method of interpolation for Banach and Hilbert spaces. This is a classical subject of study (see, e.g., \cite{BeLo,Triebel78ITFSDO,BeSh,Tartar} and the recent review paper \cite{Ameur2014} for the Hilbert space case), and it might be thought that there is little more to be said on the subject. The novelty of our presentation---this the perspective of numerical analysts who, as users of interpolation theory, are ultimately concerned with the computation of interpolation norms and the computation of error estimates expressed in terms of interpolation norms---is that we pay particular attention to the question:
``When is equivalence of norms in fact equality of norms in the interpolation of Banach and Hilbert spaces?''

\renewcommand{\thefootnote}{\arabic{footnote}}

At the heart of the paper is the study, in Section \ref{sec:interpHil}, of the interpolation of Hilbert spaces $H_0$ and $H_1$ embedded in a larger linear space $V$, in the case when the interpolating space is also Hilbert (this the so-called problem of {\em quadratic interpolation}, see, e.g., \cite{Donoghue,LiMaI,McCarthy,Ameur2004,Ameur2014}). The one line summary of this section is that all the key results of interpolation theory hold with ``equality of norms'' in place of ``equivalence of norms'' in this Hilbert space case, and this with minimal assumptions, in particular we assume nowhere that our Hilbert spaces are separable (as, e.g., in \cite{LiMaI,McCarthy,Ameur2004,Ameur2014}).

Real interpolation between Hilbert spaces $H_0$ and $H_1$ produces interpolation spaces $H_\theta$, $0<\theta<1$,  intermediate between $H_0$ and $H_1$. In the last section of the paper we apply the Hilbert space interpolation results of \S\ref{sec:interpHil} to the  Sobolev spaces $H^s(\Omega):= \{U|_\Omega: U\in H^s(\R^n)\}$ and $\tH^s(\Omega)$ (defined as the closure of $C_0^\infty(\Omega)$ in $H^s(\R^n)$), for $s\in \R$.  Questions we address are:
\begin{enumerate}
\item[(i)] For what ranges of $s$ are $H^s(\Omega)$ and $\tH^s(\Omega)$ interpolation scales, meaning that the interpolation space $H_\theta$, when interpolating between $s=s_0$ and $s=s_1$, is the expected intermediate Sobolev space with $s=s_0(1-\theta)+s_1\theta$?
\item[(ii)] When the interpolation space is the expected intermediate Sobolev space, do the interpolation space norm and intrinsic Sobolev norm coincide (the interpolation scale is exact), or, if not, how different can they be?
\end{enumerate}
A main result of the paper is to exhibit one- and two-dimensional counterexamples that show that $H^s(\Omega)$ and $\tH^s(\Omega)$ are not in general interpolation scales. It is well-known that these Sobolev spaces {\em are} interpolation scales for all $s\in \R$ when $\Omega$ is Lipschitz. In that case we demonstrate, via a number of counterexamples that, in general (we suspect, in fact, whenever $\Omega \subsetneqq \R^n$), $H^s(\Omega)$ and $\tH^s(\Omega)$ are not exact interpolation scales. Indeed, we exhibit simple examples where the ratio of interpolation norm to intrinsic Sobolev norm may be arbitrarily large.  Along the way we give explicit formulas for some of the interpolation norms arising that may be of interest in their own right.
We remark that our investigations, which are inspired by applications arising in boundary integral equation methods (see \cite{ChaHewMoi:13}), in particular are inspired by McLean \cite{McLean}, and by its appendix on interpolation of Banach and Sobolev spaces. However a result of \S\ref{sec:sob} is that one result claimed by McLean (\!\cite[Theorem B.8]{McLean}) is false.

Much of the Hilbert space Section \ref{sec:interpHil} builds strongly on previous work. In particular, our result that, with the right normalisations, the norms in the $K$- and $J$-methods of interpolation coincide in the Hilbert space case is a (corrected version of) an earlier result of Ameur \cite{Ameur2004} (the normalisations proposed and the definition of the $J$-method norm seem inaccurate in \cite{Ameur2004}). What is new in our Theorem \ref{thm:JequalK} is the method of proof---all of our proofs in this section are based on the spectral theorem that every bounded normal operator is unitarily equivalent to a multiplication operator on $L^2(\mathcal{X},M,\mu)$, for some measure space  $(\mathcal{X},M,\mu)$, this coupled with an elementary explicit treatment of interpolation on weighted $L^2$ spaces---which deals seamlessly with the general Hilbert space case without an assumption of separability or that $H_0\cap H_1$ is dense in $H_0$ and $H_1$. Again, our result in Theorem \ref{thm:unique} that
there is only one (geometric) interpolation space of exponent $\theta$, when interpolating Hilbert spaces, is a version of McCarthy's  \cite{McCarthy} uniqueness theorem. What is new is that we treat the general Hilbert space case by a method of proof based on the aforementioned spectral theorem. Our focus in this section is real interpolation, but we note in Remark \ref{rem:functor} that, as a consequence of this uniqueness result (as noted in \cite{McCarthy}), complex and real interpolation coincide in this Hilbert space case.

While our focus is primarily on interpolation of Hilbert spaces, large parts of the theory of interpolation spaces are appropriately described in the more general
Banach space context, not least when trying to clarify those results independent of the extra structure a Hilbert space brings. The first \S\ref{sec:interp} describes real interpolation in this general Banach space context. Mainly this section sets the scene. What is new is that our perspective leads us to pay close attention to the precise choice of normalisation in the definitions of the $K$- and $J$-methods of real interpolation (while at the same time making definitions suited to the later Hilbert space case).

We intend primarily that, throughout, vector space, Banach space, Hilbert space, should be read as {\em complex} vector space, Banach space, Hilbert space. But all the definitions and results proved apply equally in the real case with fairly obvious and minor changes and extensions to the arguments.

We finish this introduction by a few words on the history of interpolation (and see \cite{BeLo,Triebel78ITFSDO,BeSh,Tartar}).
There are two standard procedures for constructing interpolation spaces (see, e.g., \cite{BeLo}) in the Banach space setting. The first is the {\em complex method} due to Lions and Calder\'on, this two closely-related procedures for constructing interpolation spaces \cite[Section 4.1]{BeLo}, inspired by the classical proof of the Riesz--Thorin interpolation theorem. (These two procedures applied to a compatible pair $\overline X =(X_0,X_1)$ (defined in \S\ref{sec:interp}) produce the identical Banach space (with the identical norm) if either one of $X_0$ or $X_1$ is reflexive, in particular if either is a Hilbert space \cite[Theorem 4.3.1]{BeLo}.) We will mention the complex method only briefly, in Remark~\ref{rem:functor}. Our focus is on the so-called {\em real interpolation method}. This term is used to denote a large class of methods for  constructing interpolation spaces from a compatible pair,
all these methods constructing the same interpolation spaces \cite{Triebel78ITFSDO} (to within isomorphism, see Theorem \ref{JKsame} below).
In this paper we focus on the two standard such methods, the {\em K-method} and the {\em J-method}, which are complementary, dual constructions due to Peetre and Lions (see e.g.\ \cite{Peetre}),  inspired by the classical Marcinkiewicz interpolation theorem \cite[Section 1.3]{BeLo}.

\section{Real interpolation of Banach spaces} \label{sec:interp}

Suppose that $X_0$ and $X_1$ are Banach spaces that are linear subspaces of some larger vector space $V$. In this case we say that $\overline X = (X_0,X_1)$ is a {\em compatible pair} and $\Delta = \Delta(\overline X) := X_0\cap X_1$ and $\Sigma =\Sigma(\overline X) := X_0+X_1$ are also linear subspaces of $V$: we equip these subspaces with the norms
\begin{equation*}
\|\phi\|_{\Delta} := \max\big\{ \|\phi\|_{X_0},\|\phi\|_{X_1} \big\}
\end{equation*}
and
\begin{equation*} 
\|\phi\|_{\Sigma} := \inf\big\{\|\phi_0\|_{X_0} + \|\phi_1\|_{X_1} :\phi_0\in X_0, \, \phi_1\in X_1, \, \phi_0+\phi_1=\phi\big\},
\end{equation*}
with which $\Delta$ and $\Sigma$ are Banach spaces \cite[Lemma 2.3.1]{BeLo}. 
We note that, for $j=0,1$,
$
\Delta \subset X_j\subset \Sigma
$,
and these inclusions are continuous as $\|\phi\|_{\Sigma} \leq \|\phi\|_{X_j}$, $\phi \in X_j$, and $\|\phi\|_{X_j} \leq \|\phi\|_{\Delta}$, $\phi\in \Delta$. Thus every compatible pair is a pair of Banach spaces that are subspaces of, and continuously embedded in, a larger Banach space. In our later application to Sobolev spaces we will be interested in the important special case where $X_1\subset X_0$. In this case $\Delta= X_1$ and $\Sigma=X_0$ with equivalence of norms, indeed equality of norms if $\|\phi\|_{X_1} \geq \|\phi\|_{X_0}$, for $\phi \in X_1$.

If $X$ and $Y$ are Banach spaces and $B:X\to Y$ is a bounded linear map, we will denote the norm of $B$ by $\|B\|_{X,Y}$, abbreviated as $\|B\|_X$ when $X=Y$. Given compatible pairs $\overline X = (X_0,X_1)$ and $\overline Y = (Y_0,Y_1)$ one calls the linear map $A:\Sigma(\overline X)\to \Sigma(\overline Y)$ a {\em couple map}, and writes $A:\overline X\to \overline Y$, if $A_j$, the restriction of $A$ to $X_j$, is a bounded linear map from $X_j$ to $Y_j$. Automatically $A:\Sigma(\overline X)\to\Sigma(\overline Y)$ is bounded and $A_\Delta$, the restriction of $A$ to $\Delta(\overline X)$, is also a bounded linear map from $\Delta(\overline X)$ to $\Delta(\overline Y)$. On the other hand, given bounded linear operators $A_j:X_j\to Y_j$, for $j=0,1$, one says that $A_0$ and $A_1$ are {\em compatible} if $A_0\phi = A_1\phi$, for $\phi\in \Delta(\overline X)$. If $A_0$ and $A_1$ are compatible then there exists a unique couple map $A:\Sigma(\overline X)\to \Sigma(\overline Y)$ which has $A_0$ and $A_1$ as its
restrictions to $X_0$ and $X_1$, respectively.

 Given a compatible pair $\overline X=(X_0,X_1)$ we will call a Banach space $X$ an {\em intermediate space} between $X_0$ and $X_1$  \cite{BeLo} if $\Delta\subset X\subset \Sigma$ with continuous inclusions. We will call an intermediate space $X$ an {\em interpolation space} relative to $\overline X$ if, whenever $A:\overline X\to \overline X$, it holds that $A(X) \subset X$ and $A:X\to X$ is a bounded linear operator. Generalising this notion, given compatible pairs $\overline X$ and $\overline Y$, and Banach spaces $X$ and $Y$, we will call $(X,Y)$  {\em a pair of interpolation spaces relative to $(\overline X, \overline Y)$} if $X$ and $Y$ are intermediate with respect to $\overline X$ and $\overline Y$, respectively, and if, whenever $A:\overline X\to \overline Y$, it holds that $A(X)\subset Y$ and $A:X\to Y$ is a bounded linear operator \cite{BeLo}. If $(X,Y)$ is a pair of interpolation spaces relative to $(\overline X, \overline Y)$ then \cite[Theorem~2.4.2]{BeLo} there exists $C>0$ such that, whenever
$A:\overline X\to \overline Y$, it holds that
 \begin{equation} \label{eq:is}
 \|A\|_{X,Y} \leq C \max\big\{\|A\|_{X_0,Y_0},\|A\|_{X_1,Y_1}\big\}.
 \end{equation}
 If the bound \eqref{eq:is} holds for every $A:\overline X\to \overline Y$ with $C=1$, then $(X,Y)$ are said to be {\em exact} interpolation spaces: for example the pairs $(\Delta(\overline X), \Delta(\overline Y))$ and $(\Sigma(\overline X), \Sigma(\overline Y))$ are exact interpolation spaces with respect to $(\overline X,\overline Y)$, for all compatible pairs $\overline X$ and $\overline Y$ \cite[Section~2.3]{BeLo}. If, for all $A:\overline X\to\overline Y$,
\begin{equation} \label{eq:is1}
 \|A\|_{X,Y} \leq \|A\|^{1-\theta}_{X_0,Y_0}\,\|A\|^\theta_{X_1,Y_1},
 \end{equation}
 then the interpolation space pair $(X,Y)$ is said to be {\em exact of exponent } $\theta$.

\subsection{The \texorpdfstring{$K$}{K}-method for real interpolation}
\label{sec:K}

To explain the $K$-method, for every compatible pair $\overline X=(X_0,X_1)$ define the  $K${\em-functional} by
\begin{equation} \label{eq:Kfunct}
K(t,\phi) = K(t,\phi, \overline X) := \inf \left\{\left(\|\phi_0\|^2_{X_0} + t^2 \|\phi_1\|^2_{X_1}\right)^{1/2}: \phi_0\in X_0, \, \phi_1\in X_1, \, \phi_0+\phi_1=\phi \right\},
\end{equation}
for $t>0$ and $\phi\in \Sigma(\overline X)$; our definition is precisely that of \cite[p.~98]{LiMaI}, \cite{Bramble,McLean}. (More usual, less suited to the Hilbert space case, but leading to the same interpolation spaces and equivalent norms, is to replace the 2-norm $\left(\|\phi_0|^2_{X_0} + t^2 \|\phi_1\|^2_{X_1}\right)^{1/2}$ by the 1-norm $\|\phi_0\|_{X_0} + t \|\phi_1\|_{X_1}$ in this definition, e.g.~\cite{BeLo}.) Elementary properties of this $K$-functional are noted in \cite[p.~319]{McLean}. An additional elementary calculation is that, for $\phi\in \Delta$,
\begin{equation} \label{K1boundsK}
K(t,\phi) \leq K_1(t,\phi) := \inf_{a\in \C} \left(|a|^2\|\phi\|^2_{X_0} + t^2|1-a|^2 \|\phi\|^2_{X_1}\right)^{1/2} = \frac{t\|\phi\|_{X_0}\|\phi\|_{X_1}}{\left(\|\phi\|_{X_0}^2 + t^2\|\phi\|_{X_1}^2\right)^{1/2}\,}\, ,
\end{equation}
this infimum achieved by the choice
$a = t^2\|\phi\|_{X_1}^2/(\|\phi\|_{X_0}^2 + t^2\|\phi\|_{X_1}^2)$.

Next we define a weighted $L^q$ norm by
$$
\|f\|_{\theta,q} := \left(\int_0^\infty |t^{-\theta} f(t)|^q  \,\frac{\rd t}{t}\right)^{1/q}, \quad \mbox{for } 0<\theta<1 \mbox{ and } 1\leq q<\infty,
$$
with the modification when $q=\infty$, that
\begin{equation} \label{eqInfCase}
\|f\|_{\theta,\infty} := \mathrm{ess}\,\sup_{t>0} |t^{-\theta} f(t)|.
\end{equation}
Now define, for every compatible pair $\overline X=(X_0,X_1)$,  and for $0<\theta<1$ and $1\leq q\leq \infty$,
\begin{equation} \label{defKnorm}
K_{\theta,q}(\overline X) := \big\{\phi\in \Sigma(\overline X): \|K(\cdot,\phi)\|_{\theta,q} <\infty\big\},
\end{equation}
this a normed space (indeed a Banach space \cite[Theorem 3.4.2]{BeLo}) with the norm
\begin{equation} \label{eq:normK}
\|\phi\|_{K_{\theta,q}(\overline X)} := N_{\theta,q} \|K(\cdot,\phi)\|_{\theta,q}.
\end{equation}
Here the constant $N_{\theta,q}>0$ is an arbitrary normalisation factor. We can, of course, make the (usual) choice $N_{\theta,q}=1$, but our preferred choice of $N_{\theta,q}$ will be, where $g(s) := s/\sqrt{1+s^2}$,
\begin{equation} \label{Ntheta}
N_{\theta,q} := \|g\|^{-1}_{\theta,q} = \left\{\begin{array}{cc}
                         \left(\int_0^\infty s^{q(1-\theta)-1}(1+s^2)^{-q/2}  \,\rd s\right)^{-1/q}, & 1\leq q<\infty, \\
                         \theta^{-\theta/2}(1-\theta)^{-(1-\theta)/2}, & q=\infty;
                       \end{array}\right.
\end{equation}
the supremum in \eqref{eqInfCase} when $f=g$ achieved for $t=\sqrt{(1-\theta)/\theta}$. We note that, with this choice, $N_{\theta,q}=N_{1-\theta,q}$ (substitute $s=t^{-1}$ in \eqref{Ntheta}). Further, $\min(1,s)/\sqrt{2}\leq g(s)\leq \min(1,s)$, so that $N^\prime_{\theta,q}\leq N_{\theta,q}\leq\sqrt{2}\,N_{\theta,q}^\prime$, where
$$
N^\prime_{\theta,q} := \|\min(1,\cdot)\|^{-1}_{\theta,q} = \left\{\begin{array}{cc}
                         [q\theta(1-\theta)]^{1/q}, & 1\leq q<\infty, \\
                         1, & q=\infty.
                       \end{array}\right.
$$
We note also that \cite[Exercise B.5]{McLean}, with the choice \eqref{Ntheta},
\begin{equation} \label{eq:Nchoice}
N_{\theta,2} = \big((2/\pi)\sin(\pi\theta)\big)^{1/2}.
\end{equation}
The normalisation $N^\prime_{\theta,q}$ is used in \cite[(B.4)]{McLean} (and see \cite[Theorem 3.4.1(e)]{BeLo}); \eqref{eq:Nchoice} is used in \cite[(B.9)]{McLean}, \cite[p.~143]{Bramble}, and dates back at least to \cite[p.~99]{LiMaI}.

$K_{\theta,q}(\overline X)$, for $0<\theta<1$ and $1\leq q\leq \infty$,  is the family of spaces constructed by the $K$-method. We will often use the alternative notation $(X_0,X_1)_{\theta,q}$ for $K_{\theta,q}(\overline X)$.

Our preference for the normalisation \eqref{Ntheta} is explained by part (iii) of the following lemma.
\begin{lem} \label{intnormub}
Suppose that $\overline X=(X_0,X_1)$ is a compatible pair and define the norm on $K_{\theta,q}(\overline X)$ with the normalisation \eqref{Ntheta}.
\begin{enumerate}
\item[(i)] If $\phi\in \Delta(\overline X)$ then $\phi\in K_{\theta,q}(\overline X)$ and
$\displaystyle{
\|\phi\|_{K_{\theta,q}(X)} \leq \|\phi\|_{X_0}^{1-\theta} \, \|\phi\|_{X_1}^\theta \leq \|\phi\|_{\Delta(\overline X)}
}$.

\item[(ii)]  If $\phi\in K_{\theta,q}(\overline X)$ then $\phi\in \Sigma(\overline X)$ and
$
\|\phi\|_{\Sigma(\overline X)} \leq \|\phi\|_{K_{\theta,q}(X)}.
$

\item[(iii)] If $X_0=X_1$ (with equality of norms) then $X_0=X_1=\Sigma(\overline X)=\Delta(\overline X) = K_{\theta,q}(\overline X)$, with equality of norms.
\end{enumerate}
\end{lem}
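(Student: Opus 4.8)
The plan is to prove all three parts by sandwiching the $K$-functional $K(t,\phi)$ pointwise between constant multiples of the single profile function $g(t)=t/\sqrt{1+t^2}$, and then to exploit the homogeneity and monotonicity of the weighted norm $\|\cdot\|_{\theta,q}$ together with the normalisation $N_{\theta,q}=\|g\|_{\theta,q}^{-1}$ from \eqref{Ntheta}, so that all constants cancel exactly. Part (iii) will then fall out as an immediate corollary of (i) and (ii). Throughout I may assume, in (i), that both $\|\phi\|_{X_0}$ and $\|\phi\|_{X_1}$ are nonzero, since if either norm vanishes then $\phi=0$ in $V$ and every quantity in sight is zero.

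For (i), I would start from the elementary bound $K(t,\phi)\le K_1(t,\phi)$ in \eqref{K1boundsK}, valid for $\phi\in\Delta(\overline X)$, and rewrite the explicit right-hand side as $K_1(t,\phi)=\|\phi\|_{X_0}\,g\!\big(t\|\phi\|_{X_1}/\|\phi\|_{X_0}\big)$. Substituting $s=t\|\phi\|_{X_1}/\|\phi\|_{X_0}$ (so that $\rd t/t=\rd s/s$) in the integral defining $\|K_1(\cdot,\phi)\|_{\theta,q}$ --- or, for $q=\infty$, in the supremum \eqref{eqInfCase} --- turns the scaling factors into a clean power and yields $\|K_1(\cdot,\phi)\|_{\theta,q}=\|\phi\|_{X_0}^{1-\theta}\|\phi\|_{X_1}^{\theta}\,\|g\|_{\theta,q}$. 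Multiplying by $N_{\theta,q}=\|g\|_{\theta,q}^{-1}$ gives $\|\phi\|_{K_{\theta,q}(\overline X)}\le\|\phi\|_{X_0}^{1-\theta}\|\phi\|_{X_1}^{\theta}$, and the weighted geometric mean is bounded above by $\max\{\|\phi\|_{X_0},\|\phi\|_{X_1}\}=\|\phi\|_{\Delta(\overline X)}$, closing (i); finiteness of the norm simultaneously shows $\phi\in K_{\theta,q}(\overline X)$.

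For (ii), membership $\phi\in\Sigma(\overline X)$ is built into the definition \eqref{defKnorm}, so only the norm estimate is at issue, and here I would establish the matching lower bound $K(t,\phi)\ge g(t)\,\|\phi\|_{\Sigma(\overline X)}$. Given any decomposition $\phi=\phi_0+\phi_1$, Cauchy--Schwarz applied to the vectors $(1,t^{-1})$ and $(\|\phi_0\|_{X_0},t\|\phi_1\|_{X_1})$ gives $\|\phi_0\|_{X_0}+\|\phi_1\|_{X_1}\le\sqrt{1+t^{-2}}\,\big(\|\phi_0\|_{X_0}^2+t^2\|\phi_1\|_{X_1}^2\big)^{1/2}$; since the left-hand side dominates $\|\phi\|_{\Sigma(\overline X)}$, taking the infimum over decompositions yields $\|\phi\|_{\Sigma(\overline X)}\le g(t)^{-1}K(t,\phi)$, which is the desired pointwise bound. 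Because $\|\cdot\|_{\theta,q}$ is monotone and positively homogeneous, this gives $\|K(\cdot,\phi)\|_{\theta,q}\ge\|\phi\|_{\Sigma(\overline X)}\,\|g\|_{\theta,q}$, and multiplying by $N_{\theta,q}$ again cancels $\|g\|_{\theta,q}$ to leave $\|\phi\|_{\Sigma(\overline X)}\le\|\phi\|_{K_{\theta,q}(\overline X)}$.

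Finally, (iii) is immediate: if $X_0=X_1$ with equal norms then $\Delta(\overline X)=\Sigma(\overline X)=X_0$ with $\|\phi\|_{\Delta(\overline X)}=\|\phi\|_{\Sigma(\overline X)}=\|\phi\|_{X_0}$ (the $\Sigma$-norm identity following from the triangle inequality together with the decomposition $\phi=\phi+0$). Then the inclusions $\Delta(\overline X)\subset K_{\theta,q}(\overline X)\subset\Sigma(\overline X)$ supplied by (i)--(ii) collapse to equality of sets, while (i) gives $\|\phi\|_{K_{\theta,q}(\overline X)}\le\|\phi\|_{X_0}$ and (ii) gives $\|\phi\|_{X_0}=\|\phi\|_{\Sigma(\overline X)}\le\|\phi\|_{K_{\theta,q}(\overline X)}$, forcing equality of norms. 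I do not expect a genuine obstacle here; the only real care needed is bookkeeping --- verifying that the change of variables and the Cauchy--Schwarz step go through uniformly in the $q=\infty$ case and when one of the norms degenerates --- and recognising that the normalisation \eqref{Ntheta} has been engineered precisely so that the two profile computations cancel exactly rather than merely up to a constant.
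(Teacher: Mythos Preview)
Your proposal is correct and follows essentially the same approach as the paper: bound $K(t,\phi)$ above by $K_1(t,\phi)$ for (i), below by $g(t)\,\|\phi\|_{\Sigma(\overline X)}$ via the same Cauchy--Schwarz-type inequality for (ii), and deduce (iii) from the two. Your packaging of (i) through the identification $K_1(t,\phi)=\|\phi\|_{X_0}\,g\big(t\|\phi\|_{X_1}/\|\phi\|_{X_0}\big)$ and the substitution $s=t\|\phi\|_{X_1}/\|\phi\|_{X_0}$ is marginally tidier than the paper's direct appeal to the integral identity \eqref{useful}, since it treats the cases $q<\infty$ and $q=\infty$ uniformly, but the content is the same.
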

\begin{proof}
If $\phi\in \Delta(\overline X)$ is non-zero then, for $0<\theta<1$, $1\leq q<\infty$, using \eqref{K1boundsK},
$$
\|\phi\|^q_{K_{\theta,q}(X)} \leq N^q_{\theta,q} \|K_1(\cdot,\phi)\|_{\theta,q}^q = N^q_{\theta,q} \|\phi\|^q_{X_0}\|\phi\|^q_{X_1}\int_0^\infty \left[\frac{t^{1-\theta}}{\left(\|\phi\|_{X_0}^2 + t^2\|\phi\|_{X_1}^2\right)^{1/2}}\right]^q \,\frac{\rd t}{t} = \|\phi\|_{X_0}^{q(1-\theta)} \, \|\phi\|_{X_1}^{q\theta}\, ,
$$
the last equality a consequence of the identity
\begin{equation} \label{useful}
\int_0^\infty \frac{t^\alpha}{(a+bt^2)^{q/2}}\, \rd t
= a^{(\alpha+1-q)/2} \,b^{-(1+\alpha)/2} \int_0^\infty \frac{t^\alpha}{(1+t^2)^{q/2}}\, \rd t
=a^{(\alpha+1-q)/2} \,b^{-(1+\alpha)/2} N_{(q-\alpha-1)/q,q}^{-q},
\end{equation}
for $a,b>0$ and $-1<\alpha<q-1$.
Similarly,
$$
\|\phi\|_{K_{\theta,\infty}(X)} \leq N_{\theta,\infty} \|K_1(\cdot,\phi)\|_{\theta,\infty} = N_{\theta,\infty}  \|\phi\|_{X_0}^{1-\theta} \, \|\phi\|_{X_1}^{\theta} \sup_{s>0} \frac{s^{1-\theta}}{\sqrt{1+s^2}} = \|\phi\|_{X_0}^{1-\theta} \, \|\phi\|_{X_1}^{\theta}\, .
$$
Clearly also $\|\phi\|_{X_0}^{1-\theta} \, \|\phi\|_{X_1}^\theta \leq \|\phi\|_{\Delta(\overline X)}$ so that (i) holds.

For $\phi_0\in X_0$, $\phi_1\in X_1$, $\|\phi_0\|_{X_0}^2 + t^2 \|\phi_1\|_{X_1}^2 \geq (t^2/(1+t^2))(\|\phi_0\|_{X_0} + \|\phi_1\|_{X_1})^2$, from which it follows that
$$
K(t,\phi) \geq g(t) \|\phi\|_{\Sigma(\overline X)}, \quad \mbox{for } \phi\in \Sigma(\overline X), \; t>0,
$$
where $g(t)=t/\sqrt{1+t^2}$, which implies (ii).

To see (iii), we recall that we have observed already that, if $X_1\subset X_0$, with $\|\phi\|_{X_0}\leq \|\phi\|_{X_1}$, then $X_1=\Delta(\overline X)$ and $X_0=\Sigma(\overline X)$, with equality of norms. Thus (iii) follows from (i) and (ii).
\end{proof}

 The following theorem collects key properties of the spaces $K_{\theta,q}(\overline X)$, in the first place that they are indeed interpolation spaces. Of these properties: (i) is proved, for example, as \cite[Theorem B.2]{McLean}; (ii) in \cite[Theorem 3.4.1]{BeLo}; (iii) follows immediately from the definitions and Lemma \ref{intnormub}; (iv) and (v) are part of \cite[Theorem 3.4.2]{BeLo}; (vi) is obvious from the  definitions. Finally (vii) is the {\em reiteration} or {\em stability} theorem, that $K$-method interpolation of $K$-interpolation spaces gives the expected $K$-interpolation spaces, proved, for example, in \cite[Theorem B.6]{McLean}.
\begin{thm} \label{thm:interp} Suppose that $\overline X=(X_0,X_1)$ and $\overline Y=(Y_0,Y_1)$ are compatible pairs.  Then we have the following statements:
\begin{enumerate}
\item[(i)] For $0<\theta<1$, $1\leq q\leq \infty$, $(K_{\theta,q}(\overline X),K_{\theta,q}(\overline Y))$ is a pair of interpolation spaces with respect to $(\overline X,\overline Y)$ that is exact of exponent $\theta$.

\item[(ii)] For $0<\theta<1$, $1\leq q\leq \infty$, $(X_0,X_1)_{\theta,q} = (X_1,X_0)_{1-\theta,q}$, with equality of norms if $N_{\theta,q}=N_{1-\theta,q}$ (which holds for the choice \eqref{Ntheta}).
\item[(iii)] For $0<\theta_1<\theta_2<1$ and $1\leq q\leq \infty$, if $X_1\subset X_0$, then $X_1 \subset K_{\theta_2,q}(\overline X)\subset K_{\theta_1,q}(\overline X) \subset X_0$, and the inclusion mappings are continuous. Furthermore, if $\|\phi\|_{X_0}\leq \|\phi\|_{X_1}$, for $\phi\in X_1$, then, with the choice of normalisation \eqref{Ntheta}, $\|\phi\|_{K_{\theta_1,q}(\overline X)}\leq \|\phi\|_{K_{\theta_2,q}(\overline X)}$ for $\phi\in K_{\theta_2,q}(\overline X)$,
$$
\|\phi\|_{X_0} \leq \|\phi\|_{K_{\theta_1,q}(\overline X)}, \; \mbox{ for } \phi \in K_{\theta_1,q}(\overline X), \quad \mbox{and} \quad  \|\phi\|_{K_{\theta_2,q}(\overline X)}\leq \|\phi\|_{X_1}, \; \mbox{ for } \phi \in X_1.
$$
\item[(iv)] For $0<\theta < 1$, $1\leq q<\infty$, $\Delta(\overline X)$ is dense in $K_{\theta,q}(\overline X)$.
\item[(v)] For $0<\theta < 1$, $1\leq q<\infty$, where $X_j^\circ$ denotes the closure of $\Delta(\overline X)$ in $X_j$,
$$
(X_0,X_1)_{\theta,q} = (X_0^\circ,X_1)_{\theta,q} = (X_0,X_1^\circ)_{\theta,q} = (X_0^\circ,X_1^\circ)_{\theta,q},
$$
with equality of norms.
\item[(vi)] For $0<\theta < 1$, $1\leq q\leq \infty$, if $Z_j$ is a closed subspace of $X_j$, for $j=0,1$, and $\overline Z=(Z_0,Z_1)$, then
$$
K_{\theta,q}(\overline Z) \subset K_{\theta,q}(\overline X), \quad \mbox{with} \quad \|\phi\|_{K_{\theta,q}(\overline X)} \leq  \|\phi\|_{K_{\theta,q}(\overline Z)}, \; \mbox{ for } \phi\in K_{\theta,q}(\overline Z).
$$
\item[(vii)] Suppose that $1\leq q\leq \infty$,  $\theta_0,\theta_1\in [0,1]$, and, for $j=0,1$, $Z_j := (X_0,X_1)_{\theta_j,q}$, if $0<\theta_j<1$, while $Z_j:= X_{\theta_j}$, if $\theta_j\in \{0,1\}$. Then $(Z_0,Z_1)_{\eta,q} = (X_0,X_1)_{\theta,q}$, with equivalence of norms,
 for  $\theta = (1-\eta)\theta_0 + \eta \theta_1$ and $0<\eta<1$.
\end{enumerate}
\end{thm}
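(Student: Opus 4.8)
The plan is to group the seven parts by difficulty. Parts (ii), (iii) and (vi) are elementary manipulations of the $K$-functional together with Lemma~\ref{intnormub}; part (i)---the interpolation property that justifies the whole construction---rests on a single rescaling estimate; and the genuine analytic content sits in the density statements (iv)--(v) and, above all, in the reiteration theorem (vii). I would prove (i) first, since it is what makes the $K_{\theta,q}(\overline X)$ deserve to be called interpolation spaces, then harvest the easy parts, and leave (vii) until last.

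For (i) the key is a rescaling estimate for the $K$-functional under a couple map $A:\overline X\to\overline Y$. Writing $M_j=\|A\|_{X_j,Y_j}$, any splitting $\phi=\phi_0+\phi_1$ gives $A\phi=A\phi_0+A\phi_1$ with $\|A\phi_0\|_{Y_0}\le M_0\|\phi_0\|_{X_0}$ and $\|A\phi_1\|_{Y_1}\le M_1\|\phi_1\|_{X_1}$; substituting into the $2$-norm definition \eqref{eq:Kfunct} and taking the infimum over splittings yields
\[
K(t,A\phi,\overline Y)\le M_0\,K\!\left(\tfrac{M_1}{M_0}\,t,\phi,\overline X\right).
\]
Feeding this into $\|\cdot\|_{\theta,q}$ and substituting $s=(M_1/M_0)t$, the scale invariance of the weight $t^{-\theta}\,\rd t/t$ produces exactly the factor $M_0^{1-\theta}M_1^\theta$; since the normalisation $N_{\theta,q}$ depends only on $\theta$ and $q$ it cancels between the two sides, giving \eqref{eq:is1} with $C=1$. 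That $K_{\theta,q}(\overline X)$ is intermediate (so the pair is admissible) is Lemma~\ref{intnormub}(i),(ii), and completeness is \cite[Theorem 3.4.2]{BeLo}.

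The elementary tier is then quick. Part (ii) follows from the identity $K(t,\phi,(X_1,X_0))=t\,K(1/t,\phi,(X_0,X_1))$, read directly off \eqref{eq:Kfunct}, combined with the substitution $t\mapsto1/t$ in $\|\cdot\|_{1-\theta,q}$ and the symmetry $N_{\theta,q}=N_{1-\theta,q}$. Part (vi) is immediate because, for closed $Z_j\subset X_j$, the infimum defining $K(t,\phi,\overline Z)$ ranges over a subset of the splittings allowed for $K(t,\phi,\overline X)$, so $K(t,\phi,\overline X)\le K(t,\phi,\overline Z)$ pointwise. Part (iii) combines Lemma~\ref{intnormub}(i),(ii) (which give the outer inclusions and their norm bounds once one notes $\Delta=X_1$, $\Sigma=X_0$) with the monotonicity properties of $t\mapsto K(t,\phi)$ (nondecreasing, with $K(t,\phi)/t$ nonincreasing) that control the comparison of the weighted norms across $\theta$. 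For density (iv) I would invoke the fundamental lemma of interpolation to write $\phi\in K_{\theta,q}(\overline X)$ as $\phi=\int_0^\infty u(t)\,\rd t/t$ with $u(t)\in\Delta$ controlled by $K(t,\phi)$, and then truncate the range of $t$; the tails vanish in $\|\cdot\|_{\theta,q}$ precisely because $q<\infty$. Finally (v) follows because, for $\phi\in\Delta$, \emph{every} admissible splitting $\phi=\phi_0+\phi_1$ automatically has $\phi_0,\phi_1\in\Delta\subset X_j^\circ$, so the four $K$-functionals coincide on $\Delta$; as $\Delta$ is their common intersection and is dense in each by (iv), the four spaces coincide with equality of norms.

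The main obstacle is the reiteration theorem (vii), and its subtlety is visible in the fact that only \emph{equivalence}, not equality, of norms is claimed. My plan is the classical route. First I would record the two-sided comparison of the $K$- and $J$-functionals and their norms (the forthcoming Theorem~\ref{JKsame}), since reiteration is cleanest when one can pass freely between the two descriptions. The heart of the argument is to show that each $Z_j$ is of ``class $\theta_j$'' relative to $\overline X$, meaning $K(t,\phi,\overline X)$ is controlled above and below by $t^{\theta_j}\|\phi\|_{Z_j}$ in the appropriate sense, with the endpoints $\theta_j\in\{0,1\}$ (where $Z_j=X_{\theta_j}$) treated separately. Granting these class estimates, one computes $K\big(t,\phi,(Z_0,Z_1)\big)$ and finds it equivalent to a rescaled average of $s\mapsto K(s,\phi,\overline X)$ against the weight characterising $(X_0,X_1)_{\theta,q}$ with $\theta=(1-\eta)\theta_0+\eta\theta_1$; inserting this into $\|\cdot\|_{\eta,q}$ and changing variables closes the loop. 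The delicate points are the weight bookkeeping at the endpoints and the fact that the comparison constants depend irreducibly on $\theta_0,\theta_1,\eta,q$, so that, unlike in (i) and (ii), no choice of normalisation can upgrade the conclusion to equality. For the explicit constants I would defer to \cite[Theorem B.6]{McLean} or \cite[Theorem 3.5.3]{BeLo} rather than reproduce the estimates.
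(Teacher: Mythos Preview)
The paper does not actually prove this theorem: it simply records, item by item, where each part may be found in the literature (part (i) in \cite[Theorem~B.2]{McLean}, (ii) in \cite[Theorem~3.4.1]{BeLo}, (iii) from the definitions together with Lemma~\ref{intnormub}, (iv) and (v) in \cite[Theorem~3.4.2]{BeLo}, (vi) from the definitions, and (vii) in \cite[Theorem~B.6]{McLean}). Your sketch is therefore not competing with a proof in the paper but rather reconstructing what those references do---and the strategy you outline (the rescaling $K(t,A\phi,\overline Y)\le M_0\,K(tM_1/M_0,\phi,\overline X)$ for (i); the symmetry $K(t,\phi,(X_1,X_0))=tK(1/t,\phi,(X_0,X_1))$ for (ii); the fundamental lemma plus truncation for (iv); the class-$\theta_j$ route to reiteration for (vii)) is exactly the standard one found there.

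Two minor points. For (iii), the bare monotonicity of $t\mapsto K(t,\phi)$ and $t\mapsto K(t,\phi)/t$ does not by itself yield the quantitative inequality $\|\phi\|_{K_{\theta_1,q}}\le\|\phi\|_{K_{\theta_2,q}}$ with the particular normalisation~\eqref{Ntheta}; the paper's hint is to combine the definitions with Lemma~\ref{intnormub}, and the cleanest route is to apply the already-proved part (i) to the identity as a couple map $(X_0,X_1)\to(X_0,X_0)$ and $(X_1,X_1)\to(X_0,X_1)$, using Lemma~\ref{intnormub}(iii). For (v), your observation that any splitting of $\phi\in\Delta$ automatically has both pieces in $\Delta$ is the key and correct step; just note that passing from ``the four $K$-norms agree on the dense set $\Delta$'' to ``the four spaces coincide'' uses that each space is complete and that (vi) already gives one containment with a norm inequality, so a Cauchy-sequence argument closes the gap.
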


\subsection{The  \texorpdfstring{$J$}{J}-method}
\label{sec:J}
We turn now to study of the $J$-method, which we will see is complementary and dual to the $K$-method. Given a compatible pair $\overline X = (X_0,X_1)$, define the {\em J-functional} by
$$
J(t,\phi) = J(t,\phi,\overline X) :=  \left(\|\phi\|^2_{X_0} + t^2 \|\phi\|^2_{X_1}\right)^{1/2}, \quad \mbox{for }t>0 \mbox{ and } \phi\in \Delta(\overline X),
$$
our precise definition here that of \cite{McLean}. (More usual, less suited to the Hilbert space case but leading to the same interpolation spaces and equivalent norms, is to define $J(t,\phi):=\max(\|\phi\|_{X_0},t \|\phi\|_{X_1})$, e.g.~\cite{BeLo}.)
The space $J_{\theta,q}(\overline X)$ is now defined as follows. The elements of $J_{\theta,q}(\overline X)$ are those $\phi\in \Sigma(\overline X)$ that can be represented in the form
\begin{equation} \label{eq:intcon}
\phi = \int_0^\infty f(t)\,\frac{\rd t}{t},
\end{equation}
for some function $f:(0,\infty)\to \Delta(\overline X)$ that is strongly $\Delta(\overline X)$-measurable (see, e.g., \cite[p.~321]{McLean}) when $(0,\infty)$ is equipped with Lebesgue measure, and such that
\begin{equation} \label{Jconstraint}
\int_a^b \|f(t)\|_{\Delta(\overline X)} \,\frac{\rd t}{t} < \infty \; \mbox{ if } 0<a<b<\infty, \quad \mbox{and }\int_0^\infty \|f(t)\|_{\Sigma(\overline X)} \,\frac{\rd t}{t} < \infty.
\end{equation}
$J_{\theta,q}(\overline X)$ is a normed space with the norm defined by
\begin{equation} \label{Jnormdef}
\|\phi\|_{J_{\theta,q}(\overline X)} := N^{-1}_{\theta,q^*}\inf_f \|g_f\|_{\theta,q},
\end{equation}
where
\begin{equation} \label{qstar}
\frac{1}{q^*}+\frac{1}{q} = 1,
\end{equation}
$g_f(t) := J(t,f(t))$, and the infimum is taken over all measurable $f:(0,\infty)\to \Delta(\overline X)$ such that \eqref{eq:intcon} and \eqref{Jconstraint} hold. Our definition is standard (precisely as in \cite{McLean}), except for the normalisation factor $N^{-1}_{\theta,q^*}$. It is a standard result, e.g.\ \cite[Theorem B.3]{McLean}, that the spaces $K_{\theta,q}(\overline X)$ and $J_{\theta,q}(\overline X)$ coincide.

\begin{thm} \label{JKsame} For $0<\theta < 1$, $1\leq q\leq \infty$, $J_{\theta,q}(\overline X)=K_{\theta,q}(\overline X)$, with equivalence of norms.
\end{thm}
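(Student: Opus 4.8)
The plan is to establish Theorem~\ref{JKsame} as two separate continuous inclusions, $J_{\theta,q}(\overline X)\hookrightarrow K_{\theta,q}(\overline X)$ and $K_{\theta,q}(\overline X)\hookrightarrow J_{\theta,q}(\overline X)$, each with an explicit norm bound, so that together they yield equivalence of norms. Everything hinges on the elementary pointwise relation between the two functionals: for $\psi\in\Delta(\overline X)$ and $s,t>0$ one has $K(t,\psi)\le\min(1,t/s)\,J(s,\psi)$. This follows at once from the two admissible decompositions $\psi=\psi+0$ and $\psi=0+\psi$ in \eqref{eq:Kfunct}, which give $K(t,\psi)\le\|\psi\|_{X_0}$ and $K(t,\psi)\le t\|\psi\|_{X_1}$, together with the trivial lower bounds $\|\psi\|_{X_0},\,s\|\psi\|_{X_1}\le J(s,\psi)$. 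I would also record that $K(t,\cdot)$ is, for each fixed $t>0$, a norm on $\Sigma(\overline X)$ equivalent to $\|\cdot\|_{\Sigma(\overline X)}$ by Lemma~\ref{intnormub}(ii).

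For the easy inclusion $J\hookrightarrow K$, take $\phi=\int_0^\infty f(t)\,\rd t/t$ admissible for the $J$-method. Minkowski's integral inequality applied to the norm $K(t,\cdot)$, followed by the fundamental inequality above, gives $K(t,\phi)\le\int_0^\infty K(t,f(s))\,\rd s/s\le\int_0^\infty\min(1,t/s)\,g_f(s)\,\rd s/s$, where $g_f(s)=J(s,f(s))$. Passing to the logarithmic variables $u=\log t$, $v=\log s$ turns the right-hand side into a genuine convolution on $\R$, namely $\widetilde K=h*\widetilde g$ with kernel $h(w)=\min(1,\re^w)\,\re^{-\theta w}$, where $\widetilde K(u)=\re^{-\theta u}K(\re^u,\phi)$ and $\widetilde g(v)=\re^{-\theta v}g_f(\re^v)$ have $L^q(\R)$-norms precisely $\|K(\cdot,\phi)\|_{\theta,q}$ and $\|g_f\|_{\theta,q}$. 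Since $\theta,1-\theta>0$ the kernel is integrable with $\|h\|_{L^1(\R)}=\theta^{-1}+(1-\theta)^{-1}=[\theta(1-\theta)]^{-1}$, so Young's inequality yields $\|K(\cdot,\phi)\|_{\theta,q}\le[\theta(1-\theta)]^{-1}\|g_f\|_{\theta,q}$; taking the infimum over admissible $f$ and inserting the normalisations in \eqref{eq:normK} and \eqref{Jnormdef} produces $\|\phi\|_{K_{\theta,q}(\overline X)}\le N_{\theta,q}N_{\theta,q^*}[\theta(1-\theta)]^{-1}\|\phi\|_{J_{\theta,q}(\overline X)}$.

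The reverse inclusion $K\hookrightarrow J$ is the \emph{fundamental lemma} of interpolation theory, and this is where the real work lies. Given $\phi\in K_{\theta,q}(\overline X)$, finiteness of $\|K(\cdot,\phi)\|_{\theta,q}$ together with monotonicity of $t\mapsto K(t,\phi)$ forces $K(t,\phi)\to0$ as $t\to0$ and $K(t,\phi)/t\to0$ as $t\to\infty$ (for $q=\infty$ both are immediate from $K(t,\phi)\le Ct^\theta$; for $q<\infty$ they follow by a dyadic-averaging argument). Working on the dyadic scale, for each $\nu\in\Z$ I would choose a near-optimal decomposition $\phi=a_\nu+b_\nu$, $a_\nu\in X_0$, $b_\nu\in X_1$, with $(\|a_\nu\|_{X_0}^2+4^\nu\|b_\nu\|_{X_1}^2)^{1/2}\le2K(2^\nu,\phi)$, so that $\|a_\nu\|_{X_0}\le2K(2^\nu,\phi)$ and $2^\nu\|b_\nu\|_{X_1}\le2K(2^\nu,\phi)$. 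The two endpoint limits then give $a_\nu\to0$ in $\Sigma(\overline X)$ as $\nu\to-\infty$ and $b_\nu\to0$ in $\Sigma(\overline X)$ as $\nu\to+\infty$, so the telescoping sum of $u_\nu:=a_\nu-a_{\nu-1}=b_{\nu-1}-b_\nu\in\Delta(\overline X)$ converges to $\phi$ in $\Sigma(\overline X)$. Applying the triangle inequality to $\|u_\nu\|_{X_0}$ and $2^\nu\|u_\nu\|_{X_1}$ and using monotonicity of $K$, one bounds $J(2^\nu,u_\nu)\le CK(2^\nu,\phi)$ with an absolute constant $C$. I would then pass to the integral form \eqref{eq:intcon} by setting $f(t):=u_\nu/\log2$ for $t\in[2^\nu,2^{\nu+1})$; since $\int_{2^\nu}^{2^{\nu+1}}\rd t/t=\log2$ this gives $\int_0^\infty f(t)\,\rd t/t=\sum_\nu u_\nu=\phi$, the function $f$ is countably-valued and hence strongly $\Delta(\overline X)$-measurable and satisfies \eqref{Jconstraint}, and on each block $g_f(t)=J(t,f(t))\le C'K(t,\phi)$ pointwise (using $t\le2\cdot2^\nu$ and $K(2^\nu,\phi)\le K(t,\phi)$), whence $\|g_f\|_{\theta,q}\le C'\|K(\cdot,\phi)\|_{\theta,q}$ and the reverse norm bound follows.

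I expect the \textbf{principal obstacle} to be precisely this second inclusion. The delicate points are: deducing the endpoint decay of $K(t,\phi)$ and $K(t,\phi)/t$ from mere $L^q$-integrability when $q<\infty$; verifying that the telescoping series genuinely converges in $\Sigma(\overline X)$ rather than only formally, which is exactly what the endpoint decay supplies; and controlling $J(2^\nu,u_\nu)$ by $K(2^\nu,\phi)$ uniformly in $\nu$, where the $2$-norm form of the functionals must be reconciled with the doubling and monotonicity of $K$. A pleasant feature of the dyadic construction is that it uses only countably many decompositions, one per integer $\nu$, so no measurable-selection argument is needed and $f$ is automatically measurable; the case $q=\infty$ is handled throughout by replacing the $L^q$ estimates with their $L^\infty$ analogues. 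The argument delivers equivalence of norms with explicit constants depending only on $\theta$ and $q$; the sharper question of when these constants may be taken equal to $1$ is exactly what the Hilbert-space analysis of \S\ref{sec:interpHil} addresses.
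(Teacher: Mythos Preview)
Your proof is correct and follows precisely the standard route found in \cite[Theorem~B.3]{McLean}, which is all the paper invokes for this result --- no proof is given in the paper itself. The two halves are exactly as you describe: the easy direction $J\hookrightarrow K$ via the pointwise bound $K(t,\psi)\le\min(1,t/s)J(s,\psi)$, Minkowski, and Young's convolution inequality on the multiplicative group $(0,\infty)$; and the hard direction via the fundamental lemma, using a dyadic telescoping decomposition of $\phi$ controlled by $K(2^\nu,\phi)$.

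One small point worth making explicit when you write it up: the second condition in \eqref{Jconstraint} demands \emph{absolute} integrability, i.e.\ $\sum_\nu\|u_\nu\|_{\Sigma(\overline X)}<\infty$, whereas the telescoping-plus-endpoint-decay argument you give establishes only conditional convergence of $\sum_\nu u_\nu$ in $\Sigma(\overline X)$. The absolute convergence does hold, but for a slightly different reason: finiteness of $\|K(\cdot,\phi)\|_{\theta,q}$ forces the sequence $(2^{-\nu\theta}K(2^\nu,\phi))_{\nu\in\Z}$ to be bounded (trivially for $q=\infty$; for $q<\infty$ because its $\ell^q$-norm is finite), and then the geometric weights $2^{\nu\theta}$ for $\nu\le0$ (combined with $\|u_\nu\|_\Sigma\le\|u_\nu\|_{X_0}\le C\,K(2^\nu,\phi)$) and $2^{-\nu(1-\theta)}$ for $\nu>0$ (combined with $\|u_\nu\|_\Sigma\le\|u_\nu\|_{X_1}\le C\,2^{-\nu}K(2^\nu,\phi)$) deliver summability.
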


A major motivation for introducing the $J$-method is the following duality result. Here, for a Banach space $X$, $X^*$ denotes the dual of $X$.

\begin{thm} \label{JKdual} If $\overline X=(X_0, X_1)$ is a compatible pair and $\Delta(\overline X)$ is dense in $X_0$ and $X_1$, then $\Delta(\overline X)$ is dense in $\Sigma(\overline X)$ and $\overline X^*:=(X_0^*,X_1^*)$ is a compatible pair, and moreover
\begin{equation} \label{JKdual1}
\Delta(\overline X)^* = \Sigma(\overline X^*) \quad \mbox{and} \quad \Sigma(\overline X)^* = \Delta(\overline X^*),
\end{equation}
with equality of norms.
Further, for $0<\theta<1$, $1\leq q< \infty$, with $q^*$ defined by \eqref{qstar},
$$
(X_0,X_1)_{\theta,q}^* = (X_0^*,X_1^*)_{\theta,q^*},
$$
with equivalence of norms: precisely, if we use the normalisation \eqref{Ntheta}, for $\phi\in (X_0,X_1)_{\theta,q}$,
$$
\|\phi\|_{K_{\theta,q}(\overline X)^*} \leq  \|\phi\|_{J_{\theta,q^*}(\overline X^*)} \quad \mbox{and} \quad \|\phi\|_{K_{\theta,q^*}(\overline X^*)} \leq  \|\phi\|_{J_{\theta,q}(\overline X)^*}.
$$
\end{thm}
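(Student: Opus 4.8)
The plan is to reduce everything to two one-sided norm inequalities between the $K$- and $J$-constructions applied to $\overline X$ and to the dual pair $\overline X^*$, exploiting that the chosen normalisation \eqref{Ntheta} is engineered to make all constants equal to $1$. Before the interpolation estimates I would dispose of the preliminary identifications. Density of $\Delta(\overline X)$ in $\Sigma(\overline X)$ is immediate: given $\phi=\phi_0+\phi_1$ with $\phi_j\in X_j$, approximate each $\phi_j$ in $\|\cdot\|_{X_j}$ by elements of $\Delta(\overline X)$ and use $\|\cdot\|_{\Sigma(\overline X)}\le\|\cdot\|_{X_j}$. Since $\Delta(\overline X)$ is dense in each $X_j$, restriction to $\Delta(\overline X)$ embeds each $X_j^*$ injectively into $\Delta(\overline X)^*$, and taking $\Delta(\overline X)^*$ as the ambient space makes $\overline X^*=(X_0^*,X_1^*)$ a compatible pair. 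For the identity $\Delta(\overline X)^*=\Sigma(\overline X^*)$ I would embed $\Delta(\overline X)$ isometrically and diagonally into the $\ell^\infty$-sum $X_0\oplus X_1$ (whose norm on the diagonal is exactly $\max(\|\cdot\|_{X_0},\|\cdot\|_{X_1})=\|\cdot\|_{\Delta(\overline X)}$), extend a functional by Hahn--Banach, and read off that the dual of an $\ell^\infty$-sum is the $\ell^1$-sum, which is precisely the $\Sigma(\overline X^*)$-norm; the reverse inequality is trivial. Dually, $\Sigma(\overline X)^*=\Delta(\overline X^*)$ follows by restricting a functional on $\Sigma(\overline X)$ to each $X_j$, using $\|\cdot\|_{\Sigma(\overline X)}\le\|\cdot\|_{X_j}$, and the decomposition estimate $|\langle\psi,\phi_0+\phi_1\rangle|\le\|\psi\|_{\Delta(\overline X^*)}(\|\phi_0\|_{X_0}+\|\phi_1\|_{X_1})$.

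The analytic core rests on the elementary pairing inequality, valid for $\phi\in\Delta(\overline X)$, $\psi\in\Delta(\overline X^*)$ and any $t>0$,
\[
|\langle\psi,\phi\rangle|\le J(t,\psi,\overline X^*)\,K(1/t,\phi,\overline X),
\]
obtained by writing $\phi=\phi_0+\phi_1$, estimating $|\langle\psi,\phi\rangle|\le\|\psi\|_{X_0^*}\|\phi_0\|_{X_0}+\|\psi\|_{X_1^*}\|\phi_1\|_{X_1}$, applying Cauchy--Schwarz with the weights $(1,t)$ for $\psi$ and $(1,1/t)$ for $\phi$, and taking the infimum over decompositions. For the first displayed inequality, $\|\psi\|_{K_{\theta,q}(\overline X)^*}\le\|\psi\|_{J_{\theta,q^*}(\overline X^*)}$, I would represent $\psi=\int_0^\infty h(t)\,\rd t/t$ as in \eqref{Jnormdef}, integrate the pairing inequality, and apply H\"older with exponents $q^*$ and $q$ after inserting the weights $t^{-\theta}$ and $t^{\theta}$; the change of variables $t\mapsto 1/t$ turns $\|t^\theta K(1/t,\phi,\overline X)\|_{L^q(\rd t/t)}$ into $\|K(\cdot,\phi,\overline X)\|_{\theta,q}$, and taking the infimum over $h$ produces $\inf_h\|g_h\|_{\theta,q^*}=N_{\theta,q}\|\psi\|_{J_{\theta,q^*}(\overline X^*)}$. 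Since $\|\phi\|_{K_{\theta,q}(\overline X)}=N_{\theta,q}\|K(\cdot,\phi,\overline X)\|_{\theta,q}$, the factors $N_{\theta,q}$ cancel exactly, which is the whole point of the normalisation, and density of $\Delta(\overline X)$ in $K_{\theta,q}(\overline X)$ (Theorem~\ref{thm:interp}(iv), valid since $q<\infty$) extends the bound to all of $K_{\theta,q}(\overline X)$.

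For the converse inequality $\|\psi\|_{K_{\theta,q^*}(\overline X^*)}\le\|\psi\|_{J_{\theta,q}(\overline X)^*}$ I would first establish the dual characterisation of the $K$-functional,
\[
K(1/t,\psi,\overline X^*)=\sup_{0\ne\phi\in\Delta(\overline X)}\frac{|\langle\psi,\phi\rangle|}{J(t,\phi,\overline X)},
\]
which is exactly the statement that the norm on $\Delta(\overline X)^*=\Sigma(\overline X^*)$ dual to $\phi\mapsto J(t,\phi,\overline X)$ is the $\ell^2$-type infimum defining $K(1/t,\psi,\overline X^*)$; this follows from the duality between the weighted $\ell^2$-sums together with the density identifications of the first paragraph. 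Given $\psi\in J_{\theta,q}(\overline X)^*$, I would choose, for each $t$, a near-optimal test element $\phi_t\in\Delta(\overline X)$ with $J(t,\phi_t,\overline X)\le 1$ and $|\langle\psi,\phi_t\rangle|$ close to $K(1/t,\psi,\overline X^*)$, and form $f(t)=c(t)\phi_t$ for a scalar weight $c\ge 0$. Then $\phi:=\int_0^\infty f\,\rd t/t$ satisfies $g_f(t)=J(t,f(t),\overline X)\le c(t)$, so $\|\phi\|_{J_{\theta,q}(\overline X)}\le N_{\theta,q^*}^{-1}\|c\|_{\theta,q}$, while $\langle\psi,\phi\rangle\gtrsim\int_0^\infty c(t)K(1/t,\psi,\overline X^*)\,\rd t/t$. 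Optimising $c$ by the $L^q$--$L^{q^*}$ duality maximises the right-hand side to $\|t^\theta K(1/t,\psi,\overline X^*)\|_{L^{q^*}(\rd t/t)}=\|K(\cdot,\psi,\overline X^*)\|_{\theta,q^*}$ (again via $t\mapsto 1/t$), and the bound $\langle\psi,\phi\rangle\le\|\psi\|_{J_{\theta,q}(\overline X)^*}\|\phi\|_{J_{\theta,q}(\overline X)}$ rearranges, with the $N$-factors cancelling, to the claim. Combining the two inequalities with the coincidence $J_{\theta,q}=K_{\theta,q}$ of Theorem~\ref{JKsame} (and its analogue for $\overline X^*$) closes the loop and yields $(X_0,X_1)_{\theta,q}^*=(X_0^*,X_1^*)_{\theta,q^*}$ with equivalent norms.

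The step I expect to be genuinely delicate is the construction of $f(t)=c(t)\phi_t$ in the converse inequality: the near-optimisers $\phi_t$ must be selected so that $f$ is strongly $\Delta(\overline X)$-measurable and the representing integral \eqref{eq:intcon} converges in the sense of \eqref{Jconstraint}, a measurable-selection issue that is not automatic without separability. I would circumvent it by first proving the bound for weights $c$ that are simple, or compactly supported in $(0,\infty)$, for which only finitely many selections $\phi_t$ are needed and the constraints \eqref{Jconstraint} are obvious, and then passing to the supremum over such $c$; the complex case requires only aligning phases so that $\langle\psi,c(t)\phi_t\rangle$ is nonnegative. All other steps are routine once the pairing inequality and the dual characterisation of the $K$-functional are in hand.
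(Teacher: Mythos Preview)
Your proposal is correct and follows the standard route that the paper itself defers to: the paper's own proof consists entirely of references, citing \cite[Theorem~2.7.1]{BeLo} for the identifications~\eqref{JKdual1} and \cite[Theorem~B.5]{McLean} for the remainder, and what you have written out is essentially the content of those references adapted to the $\ell^2$-type $K$- and $J$-functionals and the normalisation~\eqref{Ntheta}. Your flagging of the measurable-selection issue in the converse inequality, and the workaround via simple or compactly supported weights $c$, match how this is handled in practice.
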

\begin{proof} We embed $X_j^*$ in $\Delta(\overline X)^*$, for $j=0,1$, in the obvious way, mapping $\psi\in X_j^*$ to its restriction to $\Delta(\overline X)$, this mapping injective since $\Delta(\overline X)$ is dense in $X_j$. That \eqref{JKdual1} holds is shown as Theorem 2.7.1 in \cite{BeLo}.
The remainder of the theorem is shown in the proof of \cite[Theorem B.5]{McLean}.
\end{proof}

The above theorem has the following corollary that is one motivation for our choice of normalisation in \eqref{Jnormdef} (cf., the corresponding result for $K$-norms in Lemma 2.1 (iii)).
\begin{cor} \label{Jlemma}
If $\overline X=(X,X)$ then $J_{\theta,q}(\overline X)=X$ with equality of norms.
\end{cor}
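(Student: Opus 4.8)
The plan is to read off both inequalities directly from the definition \eqref{Jnormdef}, using that when $X_0=X_1=X$ with equal norms one has $\Delta(\overline X)=\Sigma(\overline X)=X$ isometrically (as already recorded in the excerpt) and hence $J(t,\phi)=\|\phi\|_X(1+t^2)^{1/2}$ for every $\phi$ and $t>0$. Membership $J_{\theta,q}(\overline X)=X$ as a set is already guaranteed by Theorem \ref{JKsame} together with Lemma \ref{intnormub}(iii), so the whole task is the norm identity. Everything rests on one computation: the weighted $L^{q^*}$-norm of the multiplier $t\mapsto t^\theta(1+t^2)^{-1/2}$. Substituting $g(s)=s/\sqrt{1+s^2}$ gives $\bigl\|t^\theta(1+t^2)^{-1/2}\bigr\|_{L^{q^*}(\rd t/t)}=\|g\|_{1-\theta,q^*}=N_{1-\theta,q^*}^{-1}=N_{\theta,q^*}^{-1}$, the last step being the symmetry $N_{\theta,q^*}=N_{1-\theta,q^*}$ noted after \eqref{Ntheta}. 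This is precisely the reciprocal of the factor in \eqref{Jnormdef}, which is exactly why that normalisation delivers equality of norms and not merely equivalence.

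For the lower bound $\|\phi\|_{J_{\theta,q}(\overline X)}\geq\|\phi\|_X$, I would take an arbitrary admissible representation $\phi=\int_0^\infty f(t)\,\rd t/t$ satisfying \eqref{Jconstraint}. Since this integral converges absolutely in $\Sigma(\overline X)=X$, we have $\|\phi\|_X\leq\int_0^\infty\|f(t)\|_X\,\rd t/t=\int_0^\infty g_f(t)(1+t^2)^{-1/2}\,\rd t/t$, using $\|f(t)\|_X=J(t,f(t))(1+t^2)^{-1/2}=g_f(t)(1+t^2)^{-1/2}$. Writing the integrand as $(t^{-\theta}g_f(t))\cdot(t^\theta(1+t^2)^{-1/2})$ and applying H\"older's inequality on $(0,\infty)$ with measure $\rd t/t$ and exponents $q,q^*$ bounds it by $\|g_f\|_{\theta,q}\,\bigl\|t^\theta(1+t^2)^{-1/2}\bigr\|_{L^{q^*}(\rd t/t)}=N_{\theta,q^*}^{-1}\|g_f\|_{\theta,q}$. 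Taking the infimum over $f$ and recalling \eqref{Jnormdef} yields $\|\phi\|_X\leq\|\phi\|_{J_{\theta,q}(\overline X)}$.

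For the matching upper bound I would restrict to scalar representations $f(t)=h(t)\phi$, which represent $\phi$ exactly when $\int_0^\infty h(t)\,\rd t/t=1$; then $g_f(t)=|h(t)|\,\|\phi\|_X(1+t^2)^{1/2}$ and $\|\phi\|_{J_{\theta,q}(\overline X)}\leq N_{\theta,q^*}^{-1}\|\phi\|_X\inf_h\bigl\|t^{-\theta}|h(t)|(1+t^2)^{1/2}\bigr\|_{L^q(\rd t/t)}$. The constrained infimum over $h$ is the extremal problem dual to the H\"older estimate above, with value $1/\bigl\|t^\theta(1+t^2)^{-1/2}\bigr\|_{L^{q^*}(\rd t/t)}=N_{\theta,q^*}$; for $1<q\leq\infty$ it is attained by the H\"older extremiser $h\propto(t^\theta(1+t^2)^{-1/2})^{q^*}$, which lies in $L^1(\rd t/t)$ and so gives an admissible $f$ in the sense of \eqref{Jconstraint}. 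Hence $\|\phi\|_{J_{\theta,q}(\overline X)}\leq N_{\theta,q^*}^{-1}\|\phi\|_X\,N_{\theta,q^*}=\|\phi\|_X$, and combining with the previous paragraph proves equality.

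The main obstacle is the upper bound: whereas the lower bound is a single application of H\"older valid for every representation, the upper bound requires producing a (near-)optimal representation, so it hinges on solving the constrained extremal problem and checking that the extremiser is genuinely admissible, with the endpoint $q=1$ needing a concentrating minimising sequence rather than an attained extremiser and $q=\infty$ the usual separate treatment of the $L^\infty$/$L^1$ H\"older pair. Once the identity $\bigl\|t^\theta(1+t^2)^{-1/2}\bigr\|_{L^{q^*}(\rd t/t)}=N_{\theta,q^*}^{-1}$ is in hand, both inequalities fall out and the two constants cancel exactly against the factor $N_{\theta,q^*}^{-1}$ in \eqref{Jnormdef}. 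As an alternative one can instead derive the result from the duality of Theorem \ref{JKdual} combined with the isometry $K_{\theta,q^*}(\overline X^*)=X^*$ from Lemma \ref{intnormub}(iii), but the direct computation above seems the most transparent.
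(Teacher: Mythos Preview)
Your proof is correct. The lower bound via H\"older with exponents $q,q^*$ against the explicit multiplier $t^\theta(1+t^2)^{-1/2}$, and the upper bound via the dual extremal problem with the scalar representation $f(t)=h(t)\phi$, are both sound; the identification $\|t^\theta(1+t^2)^{-1/2}\|_{L^{q^*}(\rd t/t)}=\|g\|_{1-\theta,q^*}=N_{\theta,q^*}^{-1}$ is exactly right and the admissibility check for the H\"older extremiser is straightforward since here $\|\cdot\|_{\Delta}=\|\cdot\|_{\Sigma}=\|\cdot\|_X$.

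Your route is genuinely different from the paper's. The paper derives the case $1<q\leq\infty$ from the duality Theorem~\ref{JKdual}: it shows $\|\phi\|_X=\|\phi\|_{K_{\theta,q^*}(\overline X^*)^*}\leq\|\phi\|_{J_{\theta,q}(\overline X^{**})}\leq\|\phi\|_{J_{\theta,q}(\overline X)}$ in one direction and uses the dual pairing with $K_{\theta,q^*}(\overline X^*)=X^*$ in the other. In other words, the paper takes precisely the alternative you mention in your closing sentence. Your direct H\"older argument is more elementary and self-contained: it needs nothing beyond the definition \eqref{Jnormdef} and the normalisation identity, and avoids the detour through $X^{**}$ and the $K$--$J$ duality. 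The trade-off is that the paper's route makes transparent \emph{why} the normalisation $N_{\theta,q^*}^{-1}$ is the right one (it is forced by duality with the $K$-norm), whereas in your approach this emerges as the answer to the extremal problem. At the endpoint $q=1$ both arguments need the same concentrating-sequence patch, and the paper's treatment there is essentially what you sketch.
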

\begin{proof} It is clear, from Lemma \ref{intnormub} and Theorem \ref{JKsame}, that $J_{\theta,q}(\overline X)=X$. It remains to show equality of the norms which we will deduce from Theorem \ref{JKdual} for $1<q\leq \infty$.

We first observe (cf. part (vi) of Theorem \ref{thm:interp}) that, for $0<\theta < 1$, $1\leq q\leq \infty$, it follows immediately from the definitions that if $Z_j$ is a closed subspace of $Y_j$, for $j=0,1$, and $\overline Z=(Z_0,Z_1)$, $\overline Y = (Y_0,Y_1)$, then $\|\phi\|_{J_{\theta,q}(\overline Y)} \leq  \|\phi\|_{J_{\theta,q}(\overline Z)}$, for $\phi\in J_{\theta,q}(\overline Z)$.
We will apply this result in the case where, for some Banach space $X$, $Z_j=X$ and $Y_j=X^{**}$, the second dual of $X$, for $j=0,1$. (Recall that $X$ is canonically and isometrically embedded as a closed subspace of $X^{**}$, this subspace the whole of $X^{**}$ if $X$ is reflexive.)

Now suppose that $0<\theta<1$ and $1<q\leq \infty$. Then, for every Banach space $X$, where $\overline X = (X,X)$ and $\overline X^* = (X^*,X^*)$, it holds by Lemma \ref{intnormub} that $X^*=K_{\theta,q^*}(\overline X^*)$ with equality of norms. Applying Theorem \ref{JKdual}  it holds for $\phi\in X$ that
$$
\|\phi\|_X = \|\phi\|_{X^{**}} =  \|\phi\|_{K_{\theta,q^*}(\overline X^*)^*} \leq \|\phi\|_{J_{\theta,q}(\overline X^{**})} \leq  \|\phi\|_{J_{\theta,q}(\overline X)}
$$
and, where $\langle \cdot, \cdot \rangle$ is the duality pairing on $X\times X^*$,
$$
\|\phi\|_{J_{\theta,q}(\overline X)} = \sup_{0\neq \psi\in X^*} \frac{|\langle \phi, \psi \rangle|}{\|\psi\|_{J_{\theta,q}(\overline X)^*}} \leq
\sup_{0\neq \psi\in X^*} \frac{|\langle \phi, \psi \rangle|}{\|\psi\|_{K_{\theta,q^*}(\overline X^*)}}
= \sup_{0\neq \psi\in X^*} \frac{|\langle \phi, \psi \rangle|}{\|\psi\|_{X^*}} = \|\phi\|_X.
$$
Thus, for $\phi\in X$, $\|\phi\|_{J_{\theta,q}(\overline X)} = \|\phi\|_X$ for $0<\theta< 1$ and $1<q\leq \infty$.

To see that this holds also for $q=1$ we note that, for $1\leq q < \infty$, $0<\theta<1$, and $\phi \in X$,
\begin{equation*} 
\|\phi\|_{J_{\theta,q}(\overline X)} = \inf_f \mathcal{J}_{\theta,q}(f), \mbox{ where } \mathcal{J}_{\theta,q}(f) := N^{-1}_{\theta,q^*}\left(\int_0^\infty \left(\frac{\|f(t)\|_X}{g_\theta(t)}\right)^q \frac{\rd t}{t}\right)^{1/q},
\end{equation*}
$g_\theta(t) := t^\theta/\sqrt{1+t^2}$, and the infimum is taken over all $f$ that satisfy \eqref{eq:intcon} with $\int_0^\infty (\|f(t)\|_X\,/t)\, \rd t < \infty$. Note that $g_\theta(t)$ has a global maximum on $[0,\infty)$ at $t_0=\sqrt{\theta/(1-\theta)}$, with $g_\theta(t_0)= N_{\theta,\infty}^{-1}\leq 2^{-1/2}<1$, and is decreasing on $[t_0,\infty)$. Given $\epsilon>0$ set $f(t) = \epsilon^{-1} t\,\chi_{(t_0,t_0+\epsilon)} \,\phi$, for $t>0$, where $\chi_{(a,b)}$ denotes the characteristic function of $(a,b)\subset\R$. Then \eqref{eq:intcon} holds and
$$
\|\phi\|_{J_{\theta,1}(\overline X)} \leq \mathcal{J}_{\theta,1}(f)  = \frac{\|\phi\|_X}{\epsilon\, N_{\theta,\infty}} \int_{t_0}^{t_0+\epsilon} \frac{\rd t}{g_\theta(t)} \leq \frac{g_\theta(t_0)}{g_\theta(t_0+\epsilon)}\, \|\phi\|_X\, .
$$
As this holds for arbitrary $\epsilon >0$, $\|\phi\|_{J_{\theta,1}(\overline X)} \leq  \|\phi\|_X$.

On the other hand, if $\epsilon>0$ and $f$ satisfies \eqref{eq:intcon} with $\int_0^\infty (\|f(t)\|_X\,/t)\, \rd t < \infty$ and $\mathcal{J}_{\theta,1}(f)\leq \|\phi\|_{J_{\theta,1}(\overline X)} + \epsilon$, then, choosing $\eta\in (0,1)$ such that $\int_{(0,\infty)\setminus (\eta,\eta^{-1})} (\|f(t)\|_X/(tg_\theta(t)))\rd t <\epsilon$, it follows (since $g_\theta(t)\leq 1$) that $\|\phi-\phi_\eta\|_X < \epsilon$, where $\phi_\eta := \int_0^\infty (f_\eta(t)/t)\rd t$ and $f_\eta := f\, \chi_{(\eta,\eta^{-1})}$. Thus
$$
\|\phi\|_X -\epsilon \leq \|\phi_\eta\|_X = \lim_{q\to 1^+} \|\phi_\eta\|_{J_{\theta,q}(\overline X)}\leq \lim_{q\to 1^+} \mathcal{J}_{\theta,q}(f_\eta) =  \mathcal{J}_{\theta,1}(f_\eta) \leq  \mathcal{J}_{\theta,1}(f) + \epsilon \leq   \|\phi\|_{J_{\theta,1}(\overline X)} + 2\epsilon.
$$
As $\epsilon>0$ here is arbitrary, it follows that $\|\phi\|_{J_{\theta,1}(\overline X)} = \|\phi\|_X$.

\end{proof}

\section{Interpolation of Hilbert spaces} \label{sec:interpHil}

We focus in this section  on so-called {\em quadratic interpolation}, meaning the special case of interpolation where the compatible pairs are pairs of Hilbert spaces and the interpolation spaces are also Hilbert spaces. For the remainder of the paper we assume the normalisations \eqref{Ntheta} and \eqref{Jnormdef} for the $K$- and $J$-methods, and focus entirely on the case $q=2$, in which case the normalisation factors are given explicitly by \eqref{eq:Nchoice}. With the norms we have chosen, the $K$-method and $J$-method interpolation spaces $K_{\theta,2}(\overline X)$ and $J_{\theta,2}(\overline X)$ are Hilbert spaces (in fact, as we will see, the same Hilbert space if $\overline X$ is a Hilbert space compatible pair).

\subsection{The  \texorpdfstring{$K$}{K}- and \texorpdfstring{$J$}{J}-methods in the Hilbert space case} We begin with a result on real interpolation that at first sight appears to be a special case, but we will see later is generic.

\begin{thm} \label{thm:L2case}
Let $(\mathcal{X},M,\mu)$ be a measure space and let $\mathcal{Y}$ denote the set of measurable functions $\mathcal{X}\to \C$. Suppose that, for $j=0,1$, $w_j\in \mathcal{Y}$, with $w_j> 0$ almost everywhere, and let
$H_j:= L^2(\mathcal{X},M,w_j\mu)\subset \mathcal{Y}$, a Hilbert space with norm
$$
\|\phi\|_{H_j} := \left(\int_{\mathcal{X}} w_j |\phi|^2 \, \rd\mu\right)^{1/2}\,, \quad \mbox{for } \phi \in H_j \, .
$$
For $0<\theta < 1$, where $w_\theta:= w_0^{1-\theta}w_1^\theta$, let $H^\theta := L^2(\mathcal{X}, M, w_\theta\mu)$,  a Hilbert space with norm
$$
\|\phi\|_{H^\theta} := \left(\int_{\mathcal{X}} w_\theta|\phi|^2 \, \rd\mu\right)^{1/2},\quad \mbox{for } \phi \in H^\theta \, .
$$
Then, for $0<\theta<1$, where $\overline H=(H_0,H_1)$,
$$
H^\theta=K_{\theta,2}(\overline H) = J_{\theta,2}(\overline H),
$$
with equality of norms.
\end{thm}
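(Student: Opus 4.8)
The plan is to reduce both interpolation norms to a \emph{pointwise} (in $\mathcal X$) scalar minimisation, after which the weighted-$L^2$ structure makes the $K$- and $J$-norms computable in closed form via the identity \eqref{useful}. First I would compute the $K$-functional explicitly. For $\phi\in\Sigma(\overline H)$ and $t>0$, any splitting $\phi=\phi_0+\phi_1$ with $\phi_j\in H_j$ gives $\|\phi_0\|_{H_0}^2+t^2\|\phi_1\|_{H_1}^2=\int_{\mathcal X}(w_0|\phi_0|^2+t^2w_1|\phi_1|^2)\,\rd\mu$, which decouples over $\mathcal X$, so the infimum in \eqref{eq:Kfunct} is attained by minimising the integrand pointwise subject to $\phi_0(x)+\phi_1(x)=\phi(x)$. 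Minimising $w_0|a|^2+t^2w_1|\phi-a|^2$ over $a\in\C$ yields the pointwise minimum $t^2w_0w_1|\phi|^2/(w_0+t^2w_1)$, whence
\[
K(t,\phi)^2=\int_{\mathcal X}\frac{t^2w_0w_1}{w_0+t^2w_1}\,|\phi|^2\,\rd\mu .
\]

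Next I would insert this into \eqref{eq:normK}. As the integrand is nonnegative, Tonelli's theorem exchanges the $t$- and $x$-integrations in $\|K(\cdot,\phi)\|_{\theta,2}^2=\int_0^\infty t^{-2\theta}K(t,\phi)^2\,\rd t/t$, leaving the scalar integral $\int_0^\infty t^{1-2\theta}(w_0+t^2w_1)^{-1}\,\rd t$, which is exactly \eqref{useful} with $a=w_0$, $b=w_1$, $q=2$, $\alpha=1-2\theta$ (the admissibility constraint $-1<\alpha<q-1$ being precisely $0<\theta<1$). This evaluates to $w_0^{-\theta}w_1^{-(1-\theta)}N_{\theta,2}^{-2}$, so $\|K(\cdot,\phi)\|_{\theta,2}^2=N_{\theta,2}^{-2}\int_{\mathcal X}w_\theta|\phi|^2\,\rd\mu$ and $\|\phi\|_{K_{\theta,2}(\overline H)}=N_{\theta,2}\|K(\cdot,\phi)\|_{\theta,2}=\|\phi\|_{H^\theta}$. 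This identifies the norms and shows $K_{\theta,2}(\overline H)=\Sigma(\overline H)\cap H^\theta$; to upgrade this to $K_{\theta,2}(\overline H)=H^\theta$ I would verify $H^\theta\subset\Sigma(\overline H)$ using the splitting $\phi=\frac{w_1}{w_0+w_1}\phi+\frac{w_0}{w_0+w_1}\phi$, whose summands lie in $H_0$ and $H_1$ respectively because the relevant ratios to $w_\theta$ are bounded functions of $w_1/w_0\in(0,\infty)$.

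For the $J$-method I would run the same pointwise strategy on the representation integral. Given $\phi\in H^\theta$ and an admissible $f$ as in \eqref{eq:intcon}--\eqref{Jconstraint}, Tonelli turns $\|g_f\|_{\theta,2}^2$ into $\int_{\mathcal X}\big[\int_0^\infty t^{-2\theta-1}(w_0+t^2w_1)|f(t)|^2\,\rd t\big]\,\rd\mu$, and for a.e.\ fixed $x$ this is a scalar weighted-$L^2$ minimisation of $\int f^2\,\rd\nu$ subject to the linear constraint $\int_0^\infty f(t)\,\rd t/t=\phi(x)$. Cauchy--Schwarz (equivalently a one-parameter Lagrange computation) gives the minimiser $f(t)(x)\propto t^{2\theta}/(w_0+t^2w_1)$ and pointwise minimum $N_{\theta,2}^2w_\theta|\phi|^2$, the constant again coming from \eqref{useful} (now with $\alpha=2\theta-1$, using $N_{1-\theta,2}=N_{\theta,2}$). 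Integrating over $\mathcal X$ gives the lower bound $\inf_f\|g_f\|_{\theta,2}^2\ge N_{\theta,2}^2\|\phi\|_{H^\theta}^2$, i.e.\ $\|\phi\|_{J_{\theta,2}(\overline H)}\ge\|\phi\|_{H^\theta}$, while the matching upper bound follows by exhibiting this explicit minimiser as a genuine representation. I expect the one delicate step to be this admissibility check: one must confirm that the pointwise-optimal $f$ is strongly $\Delta(\overline H)$-measurable, meets both integrability conditions in \eqref{Jconstraint}, and reconstructs $\phi$ as a $\Delta$-valued integral --- the pointwise identity $\int_0^\infty f(t)(x)\,\rd t/t=\phi(x)$ holding by the very definition of the normalising constant, and the lower-bound step relying on the standard identification of the $\Sigma$-valued Bochner integral with pointwise integration. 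Since Theorem \ref{JKsame} already gives $J_{\theta,2}(\overline H)=K_{\theta,2}(\overline H)=H^\theta$ as sets, these two bounds deliver equality of all three norms.
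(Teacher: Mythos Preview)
Your proposal is correct and follows essentially the same route as the paper: pointwise minimisation over $\mathcal X$ to compute $K(t,\phi)^2$ explicitly, Tonelli plus the identity \eqref{useful} to collapse the $t$-integral, and for the $J$-method the same explicit minimiser $f(t)\propto t^{2\theta}\phi/(w_0+t^2w_1)$, with optimality verified (the paper does this via a first-variation/cross-term argument rather than Cauchy--Schwarz, but these are equivalent here). Your explicit check that $H^\theta\subset\Sigma(\overline H)$ via the splitting $\phi=\tfrac{w_1}{w_0+w_1}\phi+\tfrac{w_0}{w_0+w_1}\phi$ is a nice addition that the paper leaves implicit.
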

\begin{proof}
We have to show that, for $\phi\in \Sigma(\overline H)$, $\|\phi\|_{H^\theta}=\|\phi\|_{K_{\theta,2}(\overline H)}=\|\phi\|_{J_{\theta,2}(\overline H)}$, for $0<\theta<1$. Now
\begin{eqnarray*}
\|\phi\|^2_{K_{\theta,2}(\overline H)}
& = & N^2_{\theta,2} \int_0^\infty t^{-1-2\theta} \inf_{\phi_0+\phi_1=\phi} \left(\|\phi_0\|^2_{H_0} + t^2 \|\phi_1\|^2_{H_1}\right) \,\rd t
\end{eqnarray*}
and
\begin{eqnarray*}
\|\phi_0\|^2_{H_0} + t^2 \|\phi_1\|^2_{H_1} = \int_{\mathcal{X}}\left( w_0 |\phi_0|^2 + w_1t^2 |\phi_1|^2\right)\, \rd \mu.
\end{eqnarray*}
Further (by applying \cite[B.4, p. 333]{McLean} pointwise inside the integral),
$$
\inf_{\phi_0+\phi_1=\phi}  \int_{\mathcal{X}}\left( w_0 |\phi_0|^2 + w_1t^2 | \phi_1|^2\right)\, \rd \mu = \int_{\mathcal{X}} \frac{w_0w_1t^2}{w_0+t^2w_1} | \phi|^2\, \rd \mu,
$$
this infimum achieved by $\phi_1 = w_0 \phi/(w_0+t^2w_1)$. Hence, and by Tonelli's theorem and \eqref{useful},
\begin{eqnarray*}
\|\phi\|^2_{K_{\theta,2}(\overline H)}
& = & N^2_{\theta,2} \int_{\mathcal{X}}\left(\int_0^\infty t^{-1-2\theta} \frac{w_0w_1t^2}{w_0+t^2w_1} \, \rd t\right) | \phi|^2\, \rd \mu = \|\phi\|^2_{H^\theta}\, .
\end{eqnarray*}

Also,
\begin{eqnarray*}
\|\phi\|_{J_{\theta,2}(\overline H)}^2
& = & N^{-2}_{\theta,2} \inf_f\int_0^\infty t^{-1-2\theta}  \left(\|f(t)\|^2_{H_0} + t^2 \|f(t)\|^2_{H_1}\right) \,\rd t
\end{eqnarray*}
and
\begin{eqnarray*}
\|f(t)\|^2_{H_0} + t^2 \|f(t)\|^2_{H_1} = \int_{\mathcal{X}} (w_0 + w_1t^2) |f(t)|^2\, \rd \mu,
\end{eqnarray*}
so that, by Tonelli's theorem,
\begin{equation} \label{eq:norm}
\|\phi\|_{J_{\theta,2}(\overline H)}^2
 =  N^{-2}_{\theta,2} \inf_f\int_{\mathcal{X}}\left(\int_0^\infty t^{-1-2\theta}   (w_0 + w_1t^2) |f(t)|^2\, \rd t\right) \,\rd \mu.
\end{equation}
Now we show below that this infimum is achieved for the choice
\begin{equation} \label{eq:choice}
f(t) = \frac{t^{2\theta}\phi}{(w_0+w_1t^2)\int_0^\infty s^{2\theta-1}/(w_0+w_1s^2)\, \rd s} = \frac{w_\theta N_{\theta,2}^2 t^{2\theta}\phi}{ w_0+w_1t^2}\, ;
\end{equation}
to get the second equality we use that, from \eqref{useful},
$$
\int_0^\infty \frac{s^{2\theta-1}}{w_0+w_1s^2}\, \rd s = \int_0^\infty \frac{s^{1-2\theta}}{w_0s^2+w_1}\, \rd s = \frac{w_{1-\theta}}{N_{\theta,2}^2 w_0 w_1}= \frac{1}{w_\theta N_{\theta,2}^2}.
$$
Substituting from \eqref{eq:choice} in \eqref{eq:norm} gives that
$$\|\phi\|_{J_{\theta,2}(\overline H)}^2
 =  N^{2}_{\theta,2} \int_{\mathcal{X}} w_\theta^2 |\phi|^2 \left(\int_0^\infty \frac{t^{-1+2\theta}}{w_0 + w_1t^2}\, \rd t\right) \,\rd \mu
= \int_{\mathcal{X}} w_\theta| \phi|^2\, \rd \mu = \|\phi\|_{H^\theta}^2.
$$

It remains to justify that the infimum is indeed attained by \eqref{eq:choice}. We note first that the definition of $f$ implies that $\int_0^\infty (f(t)/t)\rd t = \phi$, so that \eqref{eq:intcon} holds. Now suppose that $g$ is another eligible function such that \eqref{eq:intcon} holds, and let $\delta = g-f$. Then
$\int_0^\infty (\delta(t)/t)\, \rd t = 0$
and, using \eqref{eq:choice},
\begin{eqnarray*}
& & \int_{\mathcal{X}}\left(\int_0^\infty t^{-1-2\theta}   (w_0 + w_1t^2) |g(t)|^2\, \rd t\right) \rd \mu - \int_{\mathcal{X}}\left(\int_0^\infty t^{-1-2\theta}   (w_0 + w_1 t^2) |f(t)|^2\, \rd t\right) \rd \mu\\
 &\geq&2\Re \int_{\mathcal{X}}\left(\int_0^\infty t^{-1-2\theta}   (w_0 + w_1t^2) f(t) \bar \delta(t)\rd t\right) \rd \mu = 2N_{\theta,2}^2\Re \int_{\mathcal{X}}w_\theta \phi \left(\int_0^\infty \frac{ \bar\delta(t)}{t}\rd t\right) \rd \mu= 0.
\end{eqnarray*}
\end{proof}

The following is a straightforward corollary of the above theorem.
\begin{cor} \label{thm:choiceoftheta}
Let $\overline H = (H_0,H_1)$ be a compatible pair of Hilbert spaces, $(\mathcal{X},M,\mu)$ be a measure space and let $\mathcal{Y}$ denote the set of measurable functions $\mathcal{X}\to \C$. Suppose that there exists a linear map $\mathcal{A}:\Sigma(\overline H)\to \mathcal{Y}$ and, for $j=0,1$, functions $w_j\in \mathcal{Y}$, with $w_j>0$ almost everywhere, such that the mappings $\mathcal{A}:H_j\to L^2(\mathcal{X},M,w_j\mu)$ are unitary isomorphisms. For $0<\theta < 1$ define intermediate spaces $H^\theta$, with $\Delta(\overline H)\subset H^\theta \subset \Sigma(\overline H)$, by
$$
H^\theta := \bigg\{\phi\in \Sigma(\overline H): \|\phi\|_{H^\theta} := \bigg(\int_{\mathcal{X}} \big|w_\theta \mathcal{A}\phi\big|^2 \, \rd\mu\bigg)^{1/2} < \infty\bigg\},
$$
where $w_\theta:= w_0^{1-\theta}w_1^\theta$. Then, for $0<\theta<1$, $H^\theta=K_{\theta,2}(\overline H) = J_{\theta,2}(\overline H)$, with equality of norms.
\end{cor}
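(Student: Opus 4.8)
The plan is to reduce the statement to Theorem \ref{thm:L2case} by transporting the entire problem through the isomorphism $\mathcal{A}$. Write $G_j := L^2(\mathcal{X},M,w_j\mu)$ for $j=0,1$ and $\overline G := (G_0,G_1)$, which is precisely a compatible pair of the type treated in Theorem \ref{thm:L2case}, with associated intermediate space $G^\theta = L^2(\mathcal{X},M,w_\theta\mu)$. By hypothesis $\mathcal{A}$ is a couple map $\overline H\to\overline G$ whose components $\mathcal{A}:H_j\to G_j$ are unitary, hence isometric bijections. The first thing I would record is that such an $\mathcal{A}$ is then an isometric isomorphism of $\Sigma(\overline H)$ onto $\Sigma(\overline G)$ and of $\Delta(\overline H)$ onto $\Delta(\overline G)$; the only point needing care here is the injectivity of $\mathcal{A}$ on $\Sigma(\overline H)$, which is what guarantees that decompositions and representations in $\overline H$ and $\overline G$ are in genuine bijective correspondence.

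The core of the argument is that $\mathcal{A}$ intertwines the interpolation functors, which I would establish at the level of the $K$- and $J$-functionals. For the $K$-functional, linearity of $\mathcal{A}$ sends each decomposition $\phi=\phi_0+\phi_1$ (with $\phi_j\in H_j$) to a decomposition $\mathcal{A}\phi=\mathcal{A}\phi_0+\mathcal{A}\phi_1$ (with $\mathcal{A}\phi_j\in G_j$), and conversely every decomposition of $\mathcal{A}\phi$ in $\overline G$ pulls back through $(\mathcal{A}|_{H_j})^{-1}$; since $\|\phi_j\|_{H_j}=\|\mathcal{A}\phi_j\|_{G_j}$, the two infima defining $K(t,\phi,\overline H)$ and $K(t,\mathcal{A}\phi,\overline G)$ coincide, so that $K(t,\phi,\overline H)=K(t,\mathcal{A}\phi,\overline G)$ for all $t>0$ and $\phi\in\Sigma(\overline H)$. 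Feeding this into the definitions \eqref{defKnorm} and \eqref{eq:normK} immediately yields $\phi\in K_{\theta,2}(\overline H)\iff\mathcal{A}\phi\in K_{\theta,2}(\overline G)$ together with $\|\phi\|_{K_{\theta,2}(\overline H)}=\|\mathcal{A}\phi\|_{K_{\theta,2}(\overline G)}$.

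For the $J$-functional the same mechanism works one level up: the map $f\mapsto\mathcal{A}\circ f$ is a bijection between the admissible representing functions for $\phi$ relative to $\overline H$ and those for $\mathcal{A}\phi$ relative to $\overline G$, it preserves the integral representation \eqref{eq:intcon} and the constraints \eqref{Jconstraint}, and it leaves $g_f(t)=J(t,f(t))$ pointwise unchanged because $\mathcal{A}$ is isometric on $\Delta$; hence $\|\phi\|_{J_{\theta,2}(\overline H)}=\|\mathcal{A}\phi\|_{J_{\theta,2}(\overline G)}$. With these invariances in hand, I would finish by invoking Theorem \ref{thm:L2case} for the pair $\overline G$, which gives $K_{\theta,2}(\overline G)=J_{\theta,2}(\overline G)=G^\theta=L^2(\mathcal{X},M,w_\theta\mu)$ with equality of norms. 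Transporting back through the isometry $\mathcal{A}$ then shows $K_{\theta,2}(\overline H)=J_{\theta,2}(\overline H)$, that both coincide with $H^\theta=\{\phi\in\Sigma(\overline H):\mathcal{A}\phi\in G^\theta\}$, and that their common norm coincides with the norm $\|\cdot\|_{H^\theta}$ defined in the statement.

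The step I expect to be the main obstacle is the functorial invariance, and within it the two surjectivity/pullback directions: verifying that every $\overline G$-decomposition (for $K$) and every $\overline G$-representation (for $J$) arises from the corresponding object in $\overline H$. This is where injectivity of $\mathcal{A}$ on $\Sigma(\overline H)$ and surjectivity of each $\mathcal{A}:H_j\to G_j$ are genuinely used, so that the inverse maps $(\mathcal{A}|_{H_0})^{-1}$ and $(\mathcal{A}|_{H_1})^{-1}$ agree on $\Delta(\overline G)$. For the $J$-method one must additionally confirm that the strong measurability and the integrability constraints \eqref{Jconstraint} are preserved when composing with $\mathcal{A}$ and its inverse — a routine but not entirely automatic check, since it relies on $\mathcal{A}$ and $\mathcal{A}^{-1}$ being bounded on both $\Sigma$ and $\Delta$.
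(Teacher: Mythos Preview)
Your proposal is correct and is precisely the approach the paper has in mind: the paper gives no explicit proof, simply introducing the result as ``a straightforward corollary of the above theorem'' (i.e., Theorem~\ref{thm:L2case}), and your transport-through-$\mathcal{A}$ argument is exactly how one makes that straightforwardness precise. Your careful check that $\mathcal{A}$ intertwines both the $K$- and $J$-functionals (using that each $\mathcal{A}|_{H_j}$ is a unitary bijection, so decompositions and representations pull back) is the right way to fill in the details the paper omits.
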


In the next theorem we justify our earlier statement that the situation described in Theorem \ref{thm:L2case} is generic, the point being that it follows from the spectral theorem for bounded normal operators that every Hilbert space compatible pair is unitarily equivalent to a compatible pair of weighted $L^2$-spaces. We should make clear that, while our method of proof that the $K$-method and $J$-method produce the same interpolation space, with equality of norms, appears to be new, this result (for the somewhat restricted separable Hilbert space case, with $\Delta(\overline H)$ dense in $H_0$ and $H_1$) is claimed recently in Ameur \cite[Example~4.1]{Ameur2004} (see also \cite[Section~7]{Ameur2014}),  though the choice of normalisation, details of the argument, and the definition of the $J$-method norm appear inaccurate in \cite{Ameur2004}.

In the following theorem and subsequently, for a Hilbert space $H$, $(\cdot,\cdot)_H$ denotes the inner product on $H$.
We note that $\Delta(\overline H)$ and $\Sigma(\overline H)$ are Hilbert spaces if we equip them with the equivalent norms defined by $\|\phi\|^\prime_{\Delta(\overline H)} := J(1,\phi,\overline H)$ and $\|\phi\|^\prime_{\Sigma(\overline H)} := K(1,\phi,\overline H)$, respectively.
In the next theorem we use standard results (e.g., \cite[Section VI, Theorem 2.23]{Kato}) on non-negative, closed symmetric forms and their associated self-adjoint operators.

\begin{thm} \label{thm:JequalK} Suppose that $\overline H=(H_0,H_1)$ is a compatible pair of Hilbert spaces. Then, for $0<\theta<1$,  $\|\phi\|_{K_{\theta,2}(\overline H)} = \|\phi\|_{J_{\theta,2}(\overline H)}$, for $\phi\in (H_0,H_1)_{\theta,2}$. Further, where $H^\circ_1$ denotes the closure in $H_1$ of $\Delta(\overline H)$, defining the unbounded, self-adjoint, injective operator  $T:H^\circ_1\to H^\circ_1$ by
$$
(T\phi,\psi)_{H_1} = (\phi,\psi)_{H_0}, \quad \phi,\psi\in \Delta(\overline H),
$$
and where $S$ is the unique non-negative square root of $T$, it holds that 
$$
\|\phi\|_{H_0} = \|S\phi\|_{H_1} \; \mbox{ and } \; \|\phi\|_{K_{\theta,2}(\overline H)} = \|\phi\|_{J_{\theta,2}(\overline H)} =\|S^{1-\theta}\phi\|_{H_1}, \quad \mbox{for } \phi \in \Delta(\overline H),
$$
so that $K_{\theta,2}(\overline H)$ is the closure of $\Delta(\overline H)$ in $\Sigma(\overline H)$ with respect to the norm defined by $\|\phi\|_\theta := \|S^{1-\theta}\phi\|_{H_1}$.
\end{thm}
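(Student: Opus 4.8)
The plan is to reduce Theorem~\ref{thm:JequalK} to the already-established weighted-$L^2$ case of Theorem~\ref{thm:L2case} (or rather its corollary, Corollary~\ref{thm:choiceoftheta}) by producing a concrete spectral model for the compatible pair $\overline H$. The whole statement then becomes a pointwise computation in a multiplication-operator realisation, exactly as in Theorem~\ref{thm:L2case}. The key geometric content is the identity $(T\phi,\psi)_{H_1}=(\phi,\psi)_{H_0}$: I would first check that this genuinely defines a self-adjoint operator. The form $(\phi,\psi)\mapsto(\phi,\psi)_{H_0}$ on $\Delta(\overline H)$, regarded as a densely defined form on $H_1^\circ$, is symmetric, non-negative, and closable (indeed closed after passing to the completion), so the standard form-representation theorem (the cited \cite[Section~VI, Theorem~2.23]{Kato}) yields a non-negative self-adjoint operator $T$ on $H_1^\circ$ with $(T\phi,\psi)_{H_1}=(\phi,\psi)_{H_0}$ on the form core $\Delta(\overline H)$. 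Injectivity of $T$ follows because $T\phi=0$ forces $\|\phi\|_{H_0}=(T\phi,\phi)_{H_1}=0$, and $\phi\in\Delta$ with $\|\phi\|_{H_0}=0$ gives $\phi=0$ by the compatible-pair structure. Then $S:=T^{1/2}$ is the unique non-negative self-adjoint square root, and the identity $\|\phi\|_{H_0}^2=(T\phi,\phi)_{H_1}=\|S\phi\|_{H_1}^2$ is immediate for $\phi\in\Delta(\overline H)$.

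Next I would invoke the spectral theorem for the (bounded or unbounded) non-negative self-adjoint operator $S$: $S$ is unitarily equivalent to multiplication by a non-negative measurable function on some $L^2(\mathcal{X},M,\mu)$. Concretely, I would seek a unitary $\mathcal{A}:H_1^\circ\to L^2(\mathcal{X},M,\mu)$ under which $S$ becomes multiplication by a positive measurable function $m$, i.e. $\mathcal{A}S\mathcal{A}^{-1}=M_m$. Setting $w_1:=1$ and $w_0:=m^2$, the identity $\|\phi\|_{H_0}=\|S\phi\|_{H_1}$ translates precisely into $\|\phi\|_{H_0}^2=\int_{\mathcal{X}}w_0|\mathcal{A}\phi|^2\,\rd\mu$ and $\|\phi\|_{H_1}^2=\int_{\mathcal{X}}w_1|\mathcal{A}\phi|^2\,\rd\mu$, so that $\mathcal{A}$ is simultaneously a unitary isomorphism $H_j\to L^2(\mathcal{X},M,w_j\mu)$ for $j=0,1$. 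This is exactly the hypothesis of Corollary~\ref{thm:choiceoftheta}. Applying that corollary gives $K_{\theta,2}(\overline H)=J_{\theta,2}(\overline H)=H^\theta$ with equality of norms, where $w_\theta=w_0^{1-\theta}w_1^\theta=m^{2(1-\theta)}$. Translating back through $\mathcal{A}$, the weight $w_\theta$ corresponds to multiplication by $m^{1-\theta}$, which is the spectral representative of $S^{1-\theta}$, so $\|\phi\|_{K_{\theta,2}(\overline H)}=\|\phi\|_{H^\theta}=\|S^{1-\theta}\phi\|_{H_1}$ for $\phi\in\Delta(\overline H)$.

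Finally, to obtain the closure statement, I would note that the formula $\|\phi\|_\theta:=\|S^{1-\theta}\phi\|_{H_1}$ defines a norm on $\Delta(\overline H)$ equalling the $K$-norm there, and by part~(iv) of Theorem~\ref{thm:interp} $\Delta(\overline H)$ is dense in $K_{\theta,2}(\overline H)$; hence $K_{\theta,2}(\overline H)$ is precisely the completion of $(\Delta(\overline H),\|\cdot\|_\theta)$ inside $\Sigma(\overline H)$.

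I expect the main obstacle to be the reduction to the spectral model in the fully general, possibly non-separable case where $\Delta(\overline H)$ need not be dense in $H_0$ or in $H_1$. Two points require care. First, I must work on $H_1^\circ$ rather than $H_1$: the operator $T$ naturally lives on the closure of $\Delta$ in $H_1$, and I need to argue (as in Theorem~\ref{thm:interp}(v), which asserts that passing to the closures $X_j^\circ$ does not change the interpolation spaces) that restricting attention to $H_1^\circ$, and symmetrically to $H_0^\circ$, loses nothing for the interpolation norms. Second, the spectral theorem must be applied in the form guaranteeing a genuine multiplication-operator model (valid without separability, since any bounded normal operator—and by functional calculus any non-negative self-adjoint operator via, e.g., its bounded transform $S(1+S)^{-1}$—is unitarily equivalent to a multiplication operator on some $L^2(\mathcal{X},M,\mu)$); here I would be careful that the measure space produced need not be $\sigma$-finite, and check that Tonelli's theorem, used in Theorem~\ref{thm:L2case}, still applies. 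Once the unitary model is in hand, everything else is a direct appeal to Corollary~\ref{thm:choiceoftheta} and the pointwise computation already carried out there.
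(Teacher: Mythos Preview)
Your proposal is correct and follows essentially the same strategy as the paper: build a spectral (multiplication-operator) model for the pair $(H_0,H_1)$, invoke Theorem~\ref{thm:interp}(v) to replace $H_j$ by $H_j^\circ$, and then read off everything from Corollary~\ref{thm:choiceoftheta}.

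The one organisational difference worth noting is the choice of ``base'' Hilbert space for the spectral theorem. You diagonalise the unbounded operator $S$ (or $T$) on $H_1^\circ$, which forces you through the bounded-transform detour and the extra check that $\mathcal{A}(\Delta)$ is dense in $L^2(\mathcal{X},M,m^2\mu)$. The paper instead works on $\Delta(\overline H)$ equipped with the Hilbert norm $\|\cdot\|'_{\Delta(\overline H)}=J(1,\cdot)$: there both inner products $(\cdot,\cdot)_{H_j}$ are represented by \emph{bounded} non-negative injective operators $A_0,A_1$ satisfying $A_0+A_1=I$, so one applies the bounded spectral theorem directly to $A_0$, gets $w_0$ bounded with $w_1=1-w_0$, and only afterwards reads off $T=U^{-1}(w_0/w_1)U$ and $S=T^{1/2}$. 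This buys a slightly cleaner argument (no unbounded-operator machinery, and the surjectivity of the extensions $U:H_j^\circ\to L^2(w_j\mu)$ is immediate since $U(\Delta)=L^2(\mu)$ is already the full unweighted space), but the substance is the same as what you propose.
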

\begin{proof} 
For $j=0,1$, define the non-negative bounded, injective operator $A_j:\Delta(\overline H)\to \Delta(\overline H)$ by the relation $(A_j\phi,\psi)_{\Delta(\overline H)} = (\phi,\psi)_{H_j}$, for $\phi,\psi\in \Delta(\overline H)$, where $(\cdot,\cdot)_{\Delta(\overline H)}$ denotes the inner product induced by the norm $\|\cdot\|^\prime_{\Delta(\overline H)}$.
 By the spectral theorem \cite[Corollary 4, p.~911]{DSII} there exists a measure space $(\mathcal{X},M,\mu)$, a bounded $\mu$-measurable function $w_0$, and a unitary isomorphism $U:\Delta(\overline H)\to L^2(\mathcal{X},M,\mu)$ such that
\begin{equation*} 
A_0\phi = U^{-1}w_0U\phi, \quad \mbox{for }\phi\in \Delta(\overline H),
\end{equation*}
and $w_0> 0$ $\mu$-almost everywhere since $A_0$ is non-negative and injective. Defining $w_1 := 1-w_0$ we see that $A_1\phi = U^{-1}w_1 U\phi$, for $\phi\in \Delta(\overline H)$, so that also $w_1>0$ $\mu$-almost everywhere.

For $\phi\in \Delta(\overline H)$,
$$
\|\phi\|_{H_j}^2 = (U^{-1}w_jU\phi,\phi)_{\Delta(\overline H)} = (w_jU\phi,U\phi)_{L^2(\mathcal{X},M,\mu)} = \|U\phi\|_{L^2(\mathcal{X},M,w_j\mu)}^2, \quad \mbox{for } j=0,1.
$$
Thus, where (similarly to $H_1^\circ$) $H_0^\circ$ denotes the closure of $\Delta(\overline H)$ in $H_0$,   $U$ extends to an isometry $U:H^\circ_j\to L^2(\mathcal{X},M,w_j\mu)$ for $j=0,1$. These extensions are unitary operators since their range contains $L^2(\mathcal{X},M,\mu)$, which is dense in $L^2(\mathcal{X},M,w_j\mu)$ for $j=0,1$. Where $\overline H^\circ:=(H_0^\circ,H_1^\circ)$, $U$ extends further to a  linear operator $U:\Sigma(\overline H^\circ)\to \mathcal Y$, the space of $\mu$-measurable functions defined on $\mathcal{X}$. Thus, applying Corollary \ref{thm:choiceoftheta} and noting part (v) of Theorem \ref{thm:interp}, we see that
$H^\theta=K_{\theta,2}(\overline H) = J_{\theta,2}(\overline H)$, with equality of norms, where
$$
H^\theta := \big\{\phi\in \Sigma(\overline H): \|\phi\|_{H^\theta} := \|U\phi\|_{L^2(\mathcal{X},M,w_\theta\mu)} < \infty\big\},
$$
and $w_\theta := w_0^{1-\theta}w_1^\theta$.
Moreover, for $\phi\in \Delta(\overline H)$, the unbounded operator $T:H_1^\circ\to H_1^\circ$ satisfies $T\phi = U^{-1}(w_0/w_1)U\phi$ so that
$\|S^{1-\theta}\phi\|_{H_1}^2 = (T^{1-\theta}\phi,\phi)_{H_1} = (A_1T^{1-\theta}\phi,\phi)_{\Delta(\overline H)} = (w_\theta U\phi,U\phi)_{L^2(\mathcal{X},M,\mu)}= \|\phi\|_{H^\theta}^2$, for $0<\theta<1$, and $\|S\phi\|_{H_1}^2 = (w_0 U\phi,U\phi)_{L^2(\mathcal{X},M,\mu)}= \|\phi\|_{H_0}^2$.
\end{proof}

Suppose that $\phi\in \Delta(\overline H)$. The above proof shows that
$$
\|\phi\|_{H_j} = \left(\int_{\mathcal{X}}w_j|U\phi|^2\, \rd \mu\right)^{1/2}, \;\; \mbox{for } j=0,1, \quad \mbox{and} \quad \|\phi\|_{K_{\theta,2}(\overline H)} = \left(\int_{\mathcal{X}}w_\theta|U\phi|^2\, \rd \mu\right)^{1/2}, \;\; \mbox{for } 0<\theta<1,
$$
with $U\phi\in L^2(\mathcal{X},M, \mu)$, $0<w_j\leq 1$ $\mu$-almost everywhere, for $j=0,1$, and $w_\theta := w_0^{1-\theta}w_1^\theta$. It follows from the dominated convergence theorem that $\|\phi\|_{K_{\theta,2}(\overline H)}$ depends continuously on $\theta$ and that
$$
\lim_{\theta\to 0^+}\|\phi\|_{K_{\theta,2}(\overline H)} = \|\phi\|_{H_0} \quad \mbox{and} \quad \lim_{\theta\to 1^-}\|\phi\|_{K_{\theta,2}(\overline H)} = \|\phi\|_{H_1}.
$$

In the special case, considered in \cite{LiMaI}, that $H_0$ is densely and continuously embedded in $H_1$, when $\Delta(\overline H) = H_0$ and $\Sigma(\overline H) = H_1$, the above theorem can be interpreted as stating that $(H_0,H_1)_{\theta,2}$ is the domain of the unbounded self-adjoint operator $S^{1-\theta}:H_1\to H_1$ (and $H_0$ the domain of $S$), this a standard characterisation of the $K$-method interpolation spaces in this special case, see, e.g., \cite[p.~99]{LiMaI} or \cite{Bramble}.
The following theorem (cf., \cite[Theorem B.2]{Bramble}), further illustrating the application of Corollary \ref{thm:choiceoftheta}, treats the special case when $H_1\subset H_0$, with a compact and dense embedding (which implies that both $H_0$ and $H_1$ are separable).

\begin{thm} \label{thm:compact} Suppose that $\overline H=(H_0,H_1)$ is a compatible pair of Hilbert spaces, with $H_1$ densely and compactly embedded in $H_0$.  Then the operator $T:H_1\to H_1$, defined  by
\begin{equation*} 
(T\phi,\psi)_{H_1} = (\phi,\psi)_{H_0}, \quad \phi,\psi\in H_1,
\end{equation*}
is compact, self-adjoint, and injective, and there exists  an orthogonal basis, $\{\phi_j:j\in \N\}$, for $H_1$, where each $\phi_j$ is  an eigenvector of $T$ with corresponding eigenvalue $\lambda_j$. Further, $\lambda_1\geq \lambda_2 \geq ... > 0$ and $\lambda_j\to 0$ as $j\to\infty$. Moreover, normalising this basis so that $\|\phi_j\|_{H_0}=1$ for each $j$, it holds for $0<\theta< 1$ that
\begin{equation*}
(H_0,H_1)_{\theta,2} = \Bigg\{\phi = \sum_{k=1}^\infty a_j \phi_j \in H_0: \|\phi\|_\theta^* := \bigg(\sum_{j=1}^\infty \lambda_j^{-\theta} |a_j|^2\bigg)^{1/2} < \infty\Bigg\}.
\end{equation*}
 Further, for $0<\theta<1$, $\|\phi\|_\theta^* =\|\phi\|_{K_{\theta,2}(\overline H)} = \|\phi\|_{J_{\theta,2}(\overline H)}$, for $\phi \in (H_0,H_1)_{\theta,2}$, and, for $j=0,1$, $\|\phi\|_j^* =\|\phi\|_{H_j}$, for $\phi \in H_j$.
\end{thm}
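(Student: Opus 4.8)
The plan is to reduce the entire statement to the diagonal, weighted-$\ell^2$ model already handled by Theorem \ref{thm:L2case} and its Corollary \ref{thm:choiceoftheta}, exploiting the fact that the single operator $T$ simultaneously diagonalises both inner products. First I would check that $T$ is well defined, bounded, self-adjoint, non-negative and injective. For fixed $\phi\in H_1$ the functional $\psi\mapsto(\phi,\psi)_{H_0}$ is bounded on $H_1$ (the embedding being continuous), so Riesz representation produces a unique $T\phi\in H_1$; self-adjointness and non-negativity are immediate from $(T\phi,\psi)_{H_1}=(\phi,\psi)_{H_0}$ and $(T\phi,\phi)_{H_1}=\|\phi\|_{H_0}^2$, and injectivity follows since $T\phi=0$ forces $\|\phi\|_{H_0}=0$. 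Compactness of $T$ I would obtain by writing $T=\iota^*\iota$, where $\iota:H_1\to H_0$ is the (compact) inclusion and $\iota^*:H_0\to H_1$ its Hilbert-space adjoint, a product of a compact operator with a bounded one being compact.

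With $T$ compact, self-adjoint, injective and non-negative, the spectral theorem for compact self-adjoint operators supplies an $H_1$-orthonormal system of eigenvectors $\{e_j\}$ with eigenvalues $\lambda_j>0$ (positivity from non-negativity together with injectivity), which may be ordered $\lambda_1\geq\lambda_2\geq\cdots$ and which tend to $0$ by compactness; since the eigenspaces are finite-dimensional and the eigenvalues accumulate only at $0$, the index set is countable and $H_1$ (hence $H_0$, by density) is separable, so $\{e_j\}$ is an orthonormal basis of $H_1$. The key computation is then that these eigenvectors are also orthogonal in $H_0$, with $\|e_j\|_{H_0}^2=(Te_j,e_j)_{H_1}=\lambda_j$; rescaling to $\phi_j:=e_j/\sqrt{\lambda_j}$ gives a system that is orthonormal in $H_0$ and orthogonal in $H_1$ with $\|\phi_j\|_{H_1}^2=\lambda_j^{-1}$, is still an eigenbasis of $T$, and whose span (that of $\{e_j\}$, dense in $H_1$ and hence in $H_0$) makes $\{\phi_j\}$ an orthonormal basis of $H_0$. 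For $\phi=\sum_j a_j\phi_j\in H_0$ this yields $\|\phi\|_{H_0}^2=\sum_j|a_j|^2$ and $\|\phi\|_{H_1}^2=\sum_j\lambda_j^{-1}|a_j|^2$, which already delivers the two endpoint identities $\|\phi\|_j^*=\|\phi\|_{H_j}$.

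Finally I would invoke Corollary \ref{thm:choiceoftheta} with $(\mathcal{X},M,\mu)=(\N,2^{\N},\text{counting measure})$, taking the coefficient map $\mathcal{A}\phi:=\big((\phi,\phi_j)_{H_0}\big)_{j}$ as the couple map and the weights $w_0\equiv1$ and $w_1(j):=\lambda_j^{-1}$, both positive. The two norm identities above say precisely that $\mathcal{A}:H_j\to L^2(\mathcal{X},M,w_j\mu)$ is isometric, and it is surjective since $\{\phi_j\}$ is a basis, so $\mathcal{A}$ is a unitary isomorphism for $j=0,1$. Then $w_\theta(j)=w_0^{1-\theta}w_1^\theta=\lambda_j^{-\theta}$, and the corollary identifies $K_{\theta,2}(\overline H)=J_{\theta,2}(\overline H)$, with equality of norms, as the weighted space with norm $\big(\sum_j w_\theta(j)|a_j|^2\big)^{1/2}=\big(\sum_j\lambda_j^{-\theta}|a_j|^2\big)^{1/2}=\|\phi\|_\theta^*$, which is exactly the asserted description of $(H_0,H_1)_{\theta,2}$.

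I expect the main obstacle to be the bookkeeping around the two inner products: verifying that one eigenbasis is simultaneously orthogonal for both norms, pinning down the normalisation $\phi_j=e_j/\sqrt{\lambda_j}$ so that the $H_0$-coefficients are the correct coordinates, and confirming that $\mathcal{A}$ is genuinely a unitary isomorphism onto \emph{each} weighted $\ell^2$ space (surjectivity in particular), so that the hypotheses of Corollary \ref{thm:choiceoftheta} are met. Once this model is correctly set up, the interpolation identity is simply read off from that corollary.
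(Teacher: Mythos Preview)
Your proposal is correct and follows essentially the same route as the paper: establish the spectral properties of $T$, show that the eigenbasis is simultaneously orthogonal in both inner products, renormalise so that $\|\phi_j\|_{H_0}=1$, read off the two endpoint norm formulae, and then invoke Corollary~\ref{thm:choiceoftheta} with the counting measure on $\N$ and the coefficient map $\mathcal{A}\phi=\big((\phi,\phi_j)_{H_0}\big)_j$. Your write-up is in fact a little more explicit than the paper's in two places: you spell out the compactness of $T$ via the factorisation $T=\iota^*\iota$, and you record the correct weight $w_1(j)=\lambda_j^{-1}$ (the paper's proof states $w_1(j)=\lambda_j^{-1/2}$, which is a slip given the convention $\|\psi\|_{L^2(w_j\mu)}^2=\int w_j|\psi|^2\,\mathrm{d}\mu$ used in Theorem~\ref{thm:L2case}).
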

\begin{proof} Clearly $T$ is injective and self-adjoint, and we see easily (this a standard argument) that $T$ is compact. 
The existence of an orthogonal basis of $H_1$ consisting of eigenvectors of $T$, and the properties of the eigenvalues claimed, follow from standard results \cite[Theorem 2.36]{McLean}, and the positivity and injectivity of $T$. Further,
$(\phi_i,\phi_j)_{H_0} = (T\phi_i,\phi_j)_{H_1} = \lambda_i (\phi_i,\phi_j)_{H_1}$.
Thus, normalising by $\|\phi_j\|_{H_0}=1$, it holds that
$(\phi_i,\phi_j)_{H_0} = \delta_{ij}$, and $(\phi_i,\phi_j)_{H_1} = \lambda_i^{-1} \delta_{ij}$,
for $i,j\in \N$.
Since $H_1$ is dense  in $H_0$, $\{\phi_j\}$ is an orthonormal basis of $H_0$. Further, for $\phi\in H_1$ and $j\in \N$,
$(\phi_j,\phi)_{H_1} = \lambda_j^{-1} (T\phi_j,\phi)_{H_1} = \lambda_j^{-1} (\phi_j,\phi)_{H_0}.$
 Thus, for $\phi\in H_0$,
$\|\phi\|_{H_0}^2 = \sum_{j=1}^\infty |(\phi_j,\phi)_{H_0}|^2$,
while, for $\phi\in H_1$,
$$
\|\phi\|_{H_1}^2 = \sum_{j=1}^\infty \lambda_i |(\phi,\phi_j)_{H_1}|^2  = \sum_{j=1}^\infty \lambda_i^{-1} |(\phi,\phi_j)_{H_0}|^2.
$$
To complete the proof we use Corollary \ref{thm:choiceoftheta}, with the measure space $(\N, 2^\N, \mu)$, where $\mu$ is counting measure, and where $\mathcal{A}\phi$, for $\phi\in H_0$, is the function $\mathcal{A}\phi:\N\to\C$ defined by $\mathcal{A}\phi(j) = (\phi,\phi_j)_{H_0}$, $j\in\N$, and $w_0$ and $w_j$ are defined by $w_0(j) = 1$ and $w_1(j) = \lambda_j^{-1/2}$, $j\in\N$.
\end{proof}

\subsection{Uniqueness of interpolation in the Hilbert space case} \label{sec:unique}

Theorem \ref{thm:JequalK} is a statement that, in the Hilbert space case, three standard methods of interpolation produce the same interpolation space, with the same norm.
This is illustrative of a more general result. It turns out, roughly speaking, that all methods of interpolation between Hilbert spaces that produce, for $0<\theta<1$, interpolation spaces that are Hilbert spaces and that are exact of exponent $\theta$, must coincide. To make a precise statement we need the following definition: given a Hilbert space compatible pair $\overline H = (H_0,H_1)$,  an intermediate space $H$ between $H_0$ and $H_1$ is said to be a {\em geometric interpolation space of exponent $\theta$} \cite{McCarthy}, for some $0<\theta<1$, relative to $\overline H$, if $H$ is a Hilbert space, $\Delta(\overline H)$ is dense in $H$, and the following three conditions hold for linear operators $T$:
\begin{enumerate}
\item[i)] If $T$ maps $\Delta(\overline H)$ to $\Delta(\overline H)$ and $\|T\phi\|_{H_0} \leq \lambda_0 \|\phi\|_{H_0}$ and $\|T\phi\|_{H_1} \leq \lambda_1\|\phi\|_{H_1}$, for all $\phi\in \Delta(\overline H)$, then $\|T\phi\|_{H} \leq \lambda_0^{1-\theta}\lambda_1^\theta \|\phi\|_{H}$, for all $\phi\in \Delta(\overline H)$;
\item[ii)] If $T$ maps $\Delta(\overline H)$ to $\mathcal{H}$, for some Hilbert space $\mathcal{H}$, and $\|T\phi\|_{\mathcal{H}} \leq \lambda_0 \|\phi\|_{H_0}$ and $\|T\phi\|_{\mathcal{H}} \leq \lambda_1\|\phi\|_{H_1}$, for all $\phi\in \Delta(\overline H)$, then $\|T\phi\|_{\mathcal{H}} \leq \lambda_0^{1-\theta}\lambda_1^\theta \|\phi\|_{H}$, for all $\phi\in \Delta(\overline H)$;
\item[iii)] If $T$ maps $\mathcal{H}$ to $\Delta(\overline H)$, for some Hilbert space $\mathcal{H}$, and $\|T\phi\|_{H_0} \leq \lambda_0 \|\phi\|_{\mathcal{H}}$ and $\|T\phi\|_{H_1} \leq \lambda_1\|\phi\|_{\mathcal{H}}$, for all $\phi\in \mathcal{H}$, then $\|T\phi\|_{H} \leq \lambda_0^{1-\theta}\lambda_1^\theta \|\phi\|_{\mathcal{H}}$, for all $\phi\in \mathcal{H}$.
\end{enumerate}
More briefly but equivalently, in the language introduced in Section~\ref{sec:interp}, $H$ is a geometric interpolation space of exponent $\theta$ if $\Delta(\overline H)\subset H\subset \Sigma(\overline H)$, with continuous embeddings and $\Delta(\overline H)$ dense in $H$, and if: (i) $(H,H)$ is a pair of interpolating spaces relative to $(\overline H,\overline H)$ that is exact of exponent $\theta$; and (ii) for every Hilbert space $\mathcal{H}$, where $\overline{\mathcal{H}} := (\mathcal{H},\mathcal{H})$, $(H,\mathcal{H})$ and $(\mathcal{H},H)$ are pairs of interpolation spaces, relative to $(\overline H,\overline{\mathcal{H}})$ and $(\overline{\mathcal{H}},\overline H)$, respectively, that are exact of exponent $\theta$.

The following is the key uniqueness and existence theorem; the uniqueness part is due to McCarthy \cite{McCarthy} in the separable Hilbert space case with $\Delta(\overline H)$ dense in $H_0$ and $H_1$.
We emphasise that this theorem states that, given a Hilbert space compatible pair, two geometric interpolation spaces with the same exponent must have equal norms, not only equivalent norms.

\begin{thm} \label{thm:unique}Suppose that $\overline H = (H_0,H_1)$ is a compatible pair of Hilbert spaces. Then, for $0<\theta<1$, $K_{\theta,2}(\overline H)$ is the unique geometric interpolation space of exponent $\theta$ relative to $\overline H$.
\end{thm}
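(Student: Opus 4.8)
The plan is to prove the two halves of the statement separately: that $K_{\theta,2}(\overline H)$ \emph{is} a geometric interpolation space of exponent $\theta$ (existence), and that it is the \emph{only} one (uniqueness). Existence is a bookkeeping exercise using results already in hand. That $K_{\theta,2}(\overline H)$ is a Hilbert space with $\Delta(\overline H)$ dense in it follows from Theorem \ref{thm:JequalK} (its norm is a weighted $L^2$-norm) and Theorem \ref{thm:interp}(iv), while the continuous embeddings $\Delta(\overline H)\subset K_{\theta,2}(\overline H)\subset\Sigma(\overline H)$ are Lemma \ref{intnormub}(i),(ii). The three operator conditions are exactly the reformulated assertion of \S\ref{sec:interp} that $(K_{\theta,2}(\overline H),K_{\theta,2}(\overline H))$ and, for every Hilbert $\mathcal H$, the pairs $(K_{\theta,2}(\overline H),\mathcal H)$ and $(\mathcal H,K_{\theta,2}(\overline H))$ are exact of exponent $\theta$; since $K_{\theta,2}(\overline{\mathcal H})=\mathcal H$ with equal norms by Lemma \ref{intnormub}(iii), all three follow from Theorem \ref{thm:interp}(i), after extending the operators (given only on $\Delta(\overline H)$) to couple maps on $\overline{H^\circ}=(H_0^\circ,H_1^\circ)$ and invoking $K_{\theta,2}(\overline H)=K_{\theta,2}(\overline{H^\circ})$ from Theorem \ref{thm:interp}(v).

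For uniqueness the key move is to pass, via Theorem \ref{thm:JequalK}, to the model in which $\Delta(\overline H)=L^2(\mathcal X,M,\mu)$, $\|\cdot\|_{H_j}=\|\cdot\|_{L^2(w_j\mu)}$ with $0<w_0,w_1<1$, and $\|\cdot\|_{K_{\theta,2}(\overline H)}=\|\cdot\|_{H^\theta}$ with $H^\theta=L^2(w_\theta\mu)$, $w_\theta=w_0^{1-\theta}w_1^\theta$. As all hypotheses and conclusions are unitarily invariant, it suffices to show that an arbitrary geometric interpolation space $H$ of exponent $\theta$ satisfies $\|\psi\|_H=\|\psi\|_{H^\theta}$ for $\psi\in\Delta(\overline H)=L^2(\mu)$; equality of the \emph{spaces} then follows since $H$ and $K_{\theta,2}(\overline H)$ are both intermediate spaces in which $\Delta(\overline H)$ is dense, hence each is the completion of $(\Delta(\overline H),\|\cdot\|)$ inside $\Sigma(\overline H)$. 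The first structural step is an orthogonal decomposition: for measurable $E$, multiplication by $\chi_E$ is a contraction on each $L^2(w_j\mu)$, so condition (i) (with $\lambda_0=\lambda_1=1$) makes it a contraction on $H$; being a contractive idempotent on a Hilbert space it is an orthogonal projection $P_E$, with $P_EP_F=0$ for disjoint $E,F$. Thus for any countable measurable partition $\{E_n\}$ one gets $\|\psi\|_H^2=\sum_n\|\psi\chi_{E_n}\|_H^2$ (and trivially the same for $H^\theta$), using the continuous embedding $\Delta(\overline H)\hookrightarrow H$ and dominated convergence in $L^2(\mu)$. Fixing $\rho=1+\epsilon$ and partitioning into the ``doubly dyadic'' pieces $E=\{w_0\in[\rho^k,\rho^{k+1}),\,w_1\in[\rho^l,\rho^{l+1})\}$, it remains to compare $\|\phi\|_H$ and $\|\phi\|_{H^\theta}$ for $\phi$ supported on a single such $E$, on which $w_0,w_1,w_\theta$ are each within a factor $\rho$ of constants $\gamma_0,\gamma_1,\gamma_0^{1-\theta}\gamma_1^\theta$.

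On such a piece I would extract a two-sided local bound from conditions (ii) and (iii) applied to the simplest maps, always taking $\mathcal H=L^2(E,\mu)$ so that boundedness into $\Delta(\overline H)=L^2(\mu)$ is automatic. For the upper bound, condition (iii) applied to the inclusion $\iota:L^2(E,\mu)\to L^2(\mu)$, with norms into $L^2(w_j\mu)$ at most $(\sup_E w_j)^{1/2}\le(\rho\gamma_j)^{1/2}$, gives $\|\phi\|_H\le\rho^{1/2}(\gamma_0^{1-\theta}\gamma_1^\theta)^{1/2}\|\phi\|_{L^2(E,\mu)}\le\rho^{1/2}\|\phi\|_{H^\theta}$, the last step using $\inf_E w_\theta\ge\gamma_0^{1-\theta}\gamma_1^\theta$. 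For the lower bound, condition (ii) applied to the restriction $L^2(\mu)\to L^2(E,\mu)$, with norms out of $L^2(w_j\mu)$ at most $(\inf_E w_j)^{-1/2}\le\gamma_j^{-1/2}$, gives $\|\phi\|_{H^\theta}\le\rho^{1/2}\|\phi\|_H$. Summing squares over the partition yields $\rho^{-1}\|\psi\|_{H^\theta}^2\le\|\psi\|_H^2\le\rho\|\psi\|_{H^\theta}^2$, and letting $\epsilon\to0$ gives the equality.

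I expect the localisation to be the main obstacle. The seemingly natural single-weight choice $\mathcal H=L^2(E,w_\theta\mu)$ cancels the weights immediately but is fatal for condition (iii): the inverse-weight multiplier needed to hit a prescribed $\phi$ is unbounded where $w_0$ is small, so the candidate $T$ fails to map $\mathcal H$ into $\Delta(\overline H)=L^2(\mu)$. Pinching the \emph{ratio} $w_1/w_0$ alone does not fix this; one must pinch $w_0$ and $w_1$ separately, so that on each piece $L^2(E,\mu)$ is uniformly comparable to each of $L^2(E,w_0\mu)$, $L^2(E,w_1\mu)$, $L^2(E,w_\theta\mu)$ and only bounded (indeed constant) multipliers are ever used. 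The second delicate point is the passage from ``contractive idempotent'' to ``orthogonal projection'' and the resulting countable orthogonal decomposition, which is precisely what licenses reducing the global norm identity to the pinched local estimates.
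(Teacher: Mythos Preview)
Your proof is correct and follows essentially the same McCarthy-style strategy as the paper: pass to the spectral model of Theorem~\ref{thm:JequalK}, observe that multiplication by characteristic functions gives contractive idempotents on the candidate space $H$ (hence orthogonal projections), decompose dyadically, and on each piece apply conditions (ii) and (iii) with a suitable auxiliary Hilbert space to obtain two-sided local bounds that tighten to equality as the mesh parameter tends to~$1$.

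The implementation differs in two linked details. The paper partitions only by the ratio $\omega=w_0/w_1$, into sets $\{p^n\le\omega<p^{n+1}\}$, and takes $\mathcal H$ to be the range of the corresponding projection equipped with the $H_1$ norm; this yields directly $p^{n(1-\theta)}\|\phi\|_{H_1}^2\le\|\phi\|_G^2\le p^{(n+1)(1-\theta)}\|\phi\|_{H_1}^2$ (and the analogous bound for $H_\theta$), sidestepping the obstacle you identified because the inclusion $\mathcal H\hookrightarrow H_1^\circ$ is automatically bounded with norm one and no inverse weight ever appears. Your double-dyadic partition in $w_0$ and $w_1$ separately, with $\mathcal H=L^2(E,\mu)$, is an equally valid way around the same obstacle (now all multipliers are bounded because both weights are pinched), and is perhaps more symmetric; the cost is a two-parameter sum in place of one, which is harmless. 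Either choice delivers the same conclusion.
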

\begin{proof} That $H_\theta:= K_{\theta,2}(\overline H)$ is a geometric interpolation space of exponent $\theta$ follows from Lemma \ref{intnormub}(iii) and Theorem \ref{thm:interp} (i) and (iv).
To see that $H_\theta$ is the unique geometric interpolation space we adapt the argument of \cite{McCarthy}, but using the technology (and notation) of the proof of Theorem \ref{thm:JequalK}.

So suppose that $G$ is another geometric interpolation space of exponent $\theta$ relative to $\overline H$. To show that $G=H_\theta$ with equality of norms it is enough to show that $\|\phi\|_G = \|\phi\|_{H_\theta}$, for $\phi\in \Delta(\overline H)$.

Using the notation of the proof of Theorem \ref{thm:JequalK}, recall that $T:H^\circ_1\to H^\circ_1$ is given by $T=U^{-1}\omega U$, where $\omega:= w_0/w_1$. For $0\leq a<b$ let $\chi_{a,b}\in \mathcal{Y}$ denote the characteristic function of the set $\{x\in \mathcal{X}: a\leq \omega(x) < b\}$, and define the projection operator  $P(a,b):\Sigma(\overline H^\circ)\to \Delta(\overline H)$ by $P(a,b)\phi = U^{-1} \chi_{a,b} U\phi$. Recalling that $U:H_j^\circ\to L^2(\mathcal{X},M,w_j\mu)$ is unitary, we see that the mapping $P(a,b):H_j^\circ \to H_j^\circ$ has norm one, for $j=0,1$: since $G$ and $H_\theta$ are geometric interpolation spaces, also $P(a,b):G\to G$ and $P(a,b):H_\theta\to H_\theta$ have norm one. Thus $P(a,b)$ is an orthogonal projection operator on each of $H_j^\circ$, $j=0,1$, $G$, and $H_\theta$, for otherwise there exists a $\psi$ in the null-space of $P(a,b)$ which is not orthogonal to some $\phi$ in the range of $P(a,b)$, and then, for some $\eta\in\C$, $\|\phi\|> \|\phi+\eta \psi\| \geq
\|P(\phi+\eta\psi)\| =\|\phi\|$, a contradiction.

Let $\mathcal{H}$ denote the range of $P(a,b):\Sigma(\overline H)\to \Delta(\overline H)$ equipped with the norm of $H_1$. Clearly $P(a,b):H^\circ_1\to \mathcal{H}$ has norm one, while it is a straightforward calculation that $P(a,b):H^\circ_0\to \mathcal{H}$ has norm $\leq \|\chi_{a,b} \omega^{-1/2}\|_{L^\infty(\mathcal{X},M,\mu)}\leq a^{-1/2}$, so that $P(a,b):G\to \mathcal{H}$ has norm $\leq a^{-(1-\theta)/2}$. Similarly, where $R$ is the inclusion map (so $R\phi=\phi$), $R:\mathcal{H}\to H^\circ_1$ has norm one, $R:\mathcal{H}\to H^\circ_0$ has norm $\leq \|\chi_{a,b} \omega^{1/2}\|_{L^\infty(\mathcal{X},M,\mu)}\leq b^{1/2}$, so that $R:\mathcal{H}\to G$ has norm $\leq b^{(1-\theta)/2}$. Thus, for $\phi \in \mathcal{H}$,
\begin{equation} \label{MC2}
a^{1-\theta}\|\phi\|^2_{H_1} \leq \|\phi\|^2_G\leq b^{1-\theta}\|\phi\|^2_{H_1}.
\end{equation}

Finally, for every $p>1$, we observe that, for $\phi \in \Delta(\overline H)$, where $\phi_n := P(p^n,p^{n+1})\phi$, since $\{x:\omega(x)=0\}$ has $\mu$-measure zero,
\begin{equation} \label{MC1}
\|\phi\|_{H_\theta}^2 = \sum_{n=-\infty}^\infty \|\phi_n\|^2_{H_\theta}
\quad\mbox{ and } \quad
\|\phi\|_{G}^2 = \sum_{n=-\infty}^\infty \|\phi_n\|^2_{G}\, .
\end{equation}
Further, for each $n$,
$$
\|\phi_n\|_{H_\theta}^2 = \int_{\mathcal X} w_\theta |U\phi_n|^2 \, \rd \mu = \int_{\mathcal X} \chi_{p^n,p^{n+1}}w_\theta |U\phi_n|^2 \, \rd \mu = \int_{\mathcal X} \chi_{p^n,p^{n+1}}\omega^{1-\theta}w_1 |U\phi_n|^2 \, \rd \mu
$$
so that
$$
p^{n(1-\theta)} \|\phi_n\|_{H_1}^2\leq \|\phi_n\|_{H_\theta}^2 \leq p^{(n+1)(1-\theta)} \|\phi_n\|_{H_1}^2\, .
$$
Combining these inequalities with \eqref{MC2} (taking $a=p^n$, $b=p^{n+1}$) and \eqref{MC1}, we see that
$$
p^{-(1-\theta)} \|\phi\|_{G}^2\leq \|\phi\|_{H_\theta}^2 \leq p^{1-\theta} \|\phi\|_{G}^2.
$$
Since this holds for all $p>1$, $\|\phi\|_{H_\theta}=\|\phi\|_G$.
\end{proof}

\begin{rem} \label{rem:functor} For those who like the language of category theory (commonly used in the interpolation space context, e.g.~\cite{BeLo,Triebel78ITFSDO}), the above theorem says that there exists a unique functor $F$ from the category
of Hilbert space compatible pairs to the category
of Hilbert spaces such that:
\emph{(i)} for every Hilbert space compatible pair $\overline H$,  $\Delta(\overline H) \subset F(\overline H) \subset \Sigma (\overline H)$, with the embeddings continuous and $\Delta(\overline H)$ dense in $F(\overline H)$;
\emph{(ii)} for every Hilbert space $H$, where $\overline H := (H,H)$, it holds that $F(\overline H) = H$;
\emph{(iii)} for every pair $(\overline H,\overline{\mathcal{H}})$ of Hilbert space compatible pairs,  $(F(\overline H),F(\overline{\mathcal{H}}))$ is a pair of interpolation spaces, relative to $(\overline H,\overline{\mathcal{H}})$, that is exact of exponent $\theta$. Theorems \ref{thm:unique}, \ref{thm:interp} (i), and \ref{thm:JequalK} tell us that the $K$-method and the $J$-method are both instances of this functor. It follows from Theorems 4.1.2, 4.2.1(c), 4.2.2(a) in \cite{BeLo} that the complex interpolation method is also an instance of this functor, so that, for every Hilbert space compatible pair $\overline H=(H_0,H_1)$, the standard complex interpolation space $(H_0,H_1)_{[\theta]}$ (in the
notation of \cite{BeLo}) coincides with $K_{\theta,2}(\overline H)$, with equality of norms.
\end{rem}

\subsection{Duality and interpolation scales} \label{sec:dualscales}

Theorems \ref{thm:JequalK} and \ref{thm:unique} and Remark \ref{rem:functor} make clear that life is simpler in the Hilbert space case. Two further simplications are captured in the following theorem (cf., Theorem \ref{thm:interp} (vii) and Theorem \ref{JKdual}).

\begin{thm} \label{simpler} Suppose  that $\overline H=(H_0, H_1)$ is a Hilbert space compatible pair.
\begin{enumerate}
\item[(i)] If $\theta_0,\theta_1\in [0,1]$, and, for $j=0,1$, $\mathcal{H}_j := (H_0,H_1)_{\theta_j,2}$, if $0<\theta_j<1$, while $\mathcal{H}_j:= H_{\theta_j}$, if $\theta_j\in \{0,1\}$, then $(\mathcal{H}_0,\mathcal{H}_1)_{\eta,2} = (H_0,H_1)_{\theta,2}$, with equal norms,
 for  $\theta = (1-\eta)\theta_0 + \eta \theta_1$ and $0<\eta<1$.
\item[(ii)] If
 $\Delta(\overline H)$ is dense in $H_0$ and $H_1$, so that $\overline H^*:=(H_0^*,H_1^*)$ is a Hilbert space compatible pair, then
$$
(H_0,H_1)_{\theta,2}^* = (H_0^*,H_1^*)_{\theta,2},
$$
for $0<\theta<1$, with equality of norms.
\end{enumerate}
\end{thm}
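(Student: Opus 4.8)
The plan is to prove both parts by transporting everything, via the spectral representation constructed in the proof of Theorem~\ref{thm:JequalK}, to a family of weighted $L^2$-spaces over a single measure space $(\mathcal{X},M,\mu)$, where reiteration and duality both reduce to elementary arithmetic on the weights. These are the Hilbert-space, equal-norms refinements of the reiteration Theorem~\ref{thm:interp}(vii) and the duality Theorem~\ref{JKdual}, so the whole point is to upgrade \emph{equivalence} of norms to \emph{equality}. Concretely, writing $H_j^\circ$ for the closure of $\Delta(\overline H)$ in $H_j$, the proof of Theorem~\ref{thm:JequalK} furnishes weights $w_0,w_1>0$ $\mu$-a.e.\ and a map $U$ that restricts to unitary isomorphisms $U:H_j^\circ\to L^2(\mathcal{X},M,w_j\mu)$ for $j=0,1$, and which by Theorem~\ref{thm:L2case} identifies $(H_0,H_1)_{\theta,2}=(H_0^\circ,H_1^\circ)_{\theta,2}$ with $L^2(\mathcal{X},M,w_\theta\mu)$, $w_\theta=w_0^{1-\theta}w_1^\theta$. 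The strategy is to realise all spaces appearing in (i) and (ii) as weighted $L^2$-spaces over the \emph{same} $(\mathcal{X},M,\mu)$ and identification $U$, and then to appeal to Corollary~\ref{thm:choiceoftheta}.

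For part (i) I would first use Theorem~\ref{thm:interp}(v) (legitimate since $0<\eta<1$) to replace $(\mathcal{H}_0,\mathcal{H}_1)_{\eta,2}$ by $(\mathcal{H}_0^\circ,\mathcal{H}_1^\circ)_{\eta,2}$, where $\mathcal{H}_j^\circ$ is the closure of $\Delta(\overline{\mathcal{H}})=\mathcal{H}_0\cap\mathcal{H}_1$ in $\mathcal{H}_j$. The key claim is that, for every $\theta_j\in[0,1]$, $U$ restricts to a unitary isomorphism $\mathcal{H}_j^\circ\to L^2(\mathcal{X},M,v_j\mu)$ with $v_j:=w_0^{1-\theta_j}w_1^{\theta_j}$. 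When $0<\theta_j<1$ this is immediate: $\mathcal{H}_j=(H_0^\circ,H_1^\circ)_{\theta_j,2}$ is unitarily $L^2(v_j\mu)$ by Theorem~\ref{thm:L2case}, and $\Delta(\overline H)$ is dense in it by Theorem~\ref{thm:interp}(iv), forcing $\mathcal{H}_j^\circ=\mathcal{H}_j$. Granting the claim, Corollary~\ref{thm:choiceoftheta} applied to $(\mathcal{H}_0^\circ,\mathcal{H}_1^\circ)$ (with weights $v_0,v_1>0$ a.e.) gives $(\mathcal{H}_0^\circ,\mathcal{H}_1^\circ)_{\eta,2}\cong L^2(v_\eta\mu)$, $v_\eta=v_0^{1-\eta}v_1^\eta$, with equality of norms, and the single computation
$$
v_0^{1-\eta}v_1^\eta=\big(w_0^{1-\theta_0}w_1^{\theta_0}\big)^{1-\eta}\big(w_0^{1-\theta_1}w_1^{\theta_1}\big)^{\eta}=w_0^{1-\theta}w_1^{\theta}=w_\theta,
\qquad \theta=(1-\eta)\theta_0+\eta\theta_1,
$$
identifies the result with $L^2(w_\theta\mu)\cong(H_0,H_1)_{\theta,2}$.

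For part (ii) the density hypothesis gives $H_j=H_j^\circ\cong L^2(w_j\mu)$ outright. I would realise the dual pair as weighted $L^2$-spaces with \emph{reciprocal} weights: representing a functional $\psi$ by a function $G\in\mathcal{Y}$ through the canonical pairing $\langle\phi,\psi\rangle=\int_{\mathcal{X}}(U\phi)\,G\,\rd\mu$ for $\phi\in\Delta(\overline H)$, the Riesz representation theorem on $L^2(w_j\mu)$ shows $\|\psi\|_{H_j^*}=\|G\|_{L^2(w_j^{-1}\mu)}$, so that $\psi\mapsto G$ identifies $(H_0^*,H_1^*)$---a compatible pair by Theorem~\ref{JKdual}---with $(L^2(w_0^{-1}\mu),L^2(w_1^{-1}\mu))$ over the same $(\mathcal{X},M,\mu)$. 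Corollary~\ref{thm:choiceoftheta} then yields $(H_0^*,H_1^*)_{\theta,2}\cong L^2(w_\theta^{-1}\mu)$, since $(w_0^{-1})^{1-\theta}(w_1^{-1})^{\theta}=w_\theta^{-1}$. On the other side, since $\Delta(\overline H)$ is dense in $(H_0,H_1)_{\theta,2}\cong L^2(w_\theta\mu)$ (Theorem~\ref{thm:interp}(iv)), the \emph{same} function $G$ represents $\psi$ there, and the Riesz theorem on $L^2(w_\theta\mu)$ gives $\|\psi\|_{(H_0,H_1)_{\theta,2}^*}=\|G\|_{L^2(w_\theta^{-1}\mu)}$. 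Comparing the two expressions gives $(H_0,H_1)_{\theta,2}^*=(H_0^*,H_1^*)_{\theta,2}$ with equality of norms.

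I expect the main obstacle to be the endpoint cases $\theta_j\in\{0,1\}$ of part (i), where $\mathcal{H}_j=H_{\theta_j}$ is the \emph{full} space rather than its $\Delta$-closure, so the spectral model does not obviously cover it. The point to establish is that taking the closure of the outer $\Delta(\overline{\mathcal{H}})$ inside $H_{\theta_j}$ still lands in $H_{\theta_j}^\circ$; I would deduce this from the inclusion $H_{\theta_j}\cap\Sigma(\overline H^\circ)\subseteq H_{\theta_j}^\circ$, proved by writing an element of the intersection both in its $H_{\theta_j}$-orthogonal decomposition and as a sum from $H_0^\circ+H_1^\circ$, and checking that the two representations force the component orthogonal to $H_{\theta_j}^\circ$ to vanish (using $H_0\cap H_1^\circ\subseteq\Delta(\overline H)\subseteq H_0^\circ$, and symmetrically). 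A secondary, purely bookkeeping, difficulty is the conjugation-and-weight arithmetic in the complex case that must be arranged so that the reciprocal-weight map $\psi\mapsto G$ is a genuine (complex-linear) isometry for the chosen duality pairing.
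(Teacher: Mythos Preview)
For part~(i) your approach coincides with the paper's: both reduce via the spectral model of Theorem~\ref{thm:JequalK} to weighted $L^2$-spaces and then observe the single identity $v_0^{1-\eta}v_1^\eta=w_\theta$. The only difference is in how the endpoint cases $\theta_j\in\{0,1\}$ are handled. The paper simply invokes Theorem~\ref{thm:interp}(v) at the outset to assume $\Delta(\overline H)$ is dense in $H_0$ and $H_1$, after which $\mathcal{H}_j=H_{\theta_j}=H_{\theta_j}^\circ$ at the endpoints and no further work is needed. Your route---applying Theorem~\ref{thm:interp}(v) to the \emph{outer} pair and then arguing that the resulting closures $\mathcal{H}_j^\circ$ land inside $H_{\theta_j}^\circ$---is correct and is essentially what one must check to justify the paper's one-line reduction, but the paper's ordering makes the endpoint issue disappear more cleanly.

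For part~(ii) you take a genuinely different route from the paper. The paper's proof is a one-liner: Theorem~\ref{JKdual} (with $q=q^*=2$) gives the two inequalities
\[
\|\cdot\|_{K_{\theta,2}(\overline H)^*}\le\|\cdot\|_{J_{\theta,2}(\overline H^*)}
\quad\text{and}\quad
\|\cdot\|_{K_{\theta,2}(\overline H^*)}\le\|\cdot\|_{J_{\theta,2}(\overline H)^*},
\]
and Theorem~\ref{thm:JequalK} ($J_{\theta,2}=K_{\theta,2}$ with equal norms for Hilbert pairs) turns these into a chain that forces equality. Your argument instead realises both $(H_0,H_1)_{\theta,2}^*$ and $(H_0^*,H_1^*)_{\theta,2}$ directly as the same weighted space $L^2(w_\theta^{-1}\mu)$ via the reciprocal-weight Riesz identification. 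Both are valid; the paper's approach cashes in the work already done (the $J=K$ equality and the normalisation of the Banach-space duality theorem) and avoids the conjugation bookkeeping you flag, while yours is more self-contained and makes the mechanism---reciprocal weights under duality---completely explicit.
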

\begin{proof}
To prove (i) we note first that, by Theorem \ref{thm:interp} (v), we can assume $\Delta(\overline H)$ is dense in $H_0$ and $H_1$.

With this assumption it holds---see the proof of Theorem \ref{thm:JequalK}---that there exists a measure space $(\mathcal{X},M,\mu)$, a unitary operator $U:\Delta(\overline H)\to L^2(\mathcal{X},M,\mu)$, and functions $w_j:\mathcal{X}\to [0,\infty)$ that are $\mu$-measurable and non-zero $\mu$-almost everywhere, such that $U:H_j\to L^2(\mathcal{X},M,w_j\mu)$ is a unitary operator for $j=0,1$ and $U:(H_0,H_1)_{\theta,2}\to L^2(\mathcal{X},M,w_\theta \mu)$ is a unitary operator for $0<\theta<1$, where $w_\theta:=w_0^{1-\theta}w_1^\theta$. But this identical argument, repeated for $(\mathcal{H}_0,\mathcal{H}_1)_{\eta,2}$, gives that $U:(\mathcal{H}_0,\mathcal{H}_1)_{\eta,2}\to L^2(\mathcal{X},M,W_\eta\mu)$ is a unitary operator, where $W_\eta := W_0^{1-\eta}W_1^\eta$ and $W_j := w_{\theta_j}$. But this proves the result since $W_\eta=w_\theta$ if $\theta = (1-\eta)\theta_0 + \eta \theta_1$ and $0<\eta<1$.

That (ii) holds is immediate from Theorems \ref{JKdual} and \ref{thm:JequalK}.
\end{proof}

\begin{rem} \label{rem:scales} A useful concept, used extensively in Section~\ref{sec:sob} below, is that of an interpolation scale. Given a closed interval $\mathcal{I}\subset \R$ (e.g., $\mathcal{I} = [a,b]$, for some $a<b$, $\mathcal{I} = [0,\infty)$, $\mathcal{I} = \R$) we will say that a collection of Hilbert spaces $\{H_s:s \in \mathcal{I}\}$, indexed by $\mathcal{I}$, is an {\em interpolation scale} if, for all $s,t\in \mathcal{I}$ and $0<\eta<1$,
$$
(H_s,H_t)_{\eta,2} = H_\theta, \quad \mbox{for }  \theta = (1-\eta)s + \eta t.
$$
We will say that $\{H_s:s \in \mathcal{I}\}$  is an {\em exact} interpolation scale if, moreover, the norms of $(H_s,H_t)_{\eta,2}$ and $H_\theta$ are equal, for $s,t\in\mathcal{I}$ and $0<\eta<1$.

In this terminology part (i) of the above theorem is precisely a statement that, for every Hilbert space compatible pair $\overline H=(H_0,H_1)$, where $H_s:= (H_0,H_1)_{s,2}$, for $0<s<1$, $\{H_s:0\leq s\leq 1\}$ is an exact interpolation scale. If $\Delta(\overline H)$ is dense in $H_0$ and $H_1$, part (ii) implies that also $\{H^*_s:0\leq s\leq 1\}$ is an exact interpolation scale.
\end{rem}

\section{Interpolation of Sobolev spaces} \label{sec:sob}

In this section we study Hilbert space interpolation, analysed in Section~\ref{sec:interpHil}, applied to the classical Sobolev spaces $H^s(\Omega)$ and $\tH^s(\Omega)$, for $s\in \R$ and an open set $\Omega$. (Our notations here, which we make precise below, are those of \cite{McLean}.) This is a classical topic of study (see, e.g., notably \cite{LiMaI}). Our results below provide a more complete answer than hitherto available to the following questions:
\begin{enumerate}
\item[(i)] Let $H_s$, for $s\in \R$, denote $H^s(\Omega)$ or $\tH^s(\Omega)$. For which classes of $\Omega$ and what range of $s$ is $\{H_s\}$ an (exact) interpolation scale?
\item[(ii)] In cases where $\{H_s\}$ is an interpolation scale but not an exact interpolation scale, how different are the $H_s$ norm and the interpolation norm?
\end{enumerate}
Our answers to (i) and (ii) will consist mainly of examples and counterexamples. In particular, in the course of answering these questions we will write down, in certain cases of interest, explicit expressions for interpolation norms that may be of some independent interest.
Our investigations in this section are in very large part prompted and inspired by the results and discussion in \cite[Appendix B]{McLean}, though we will exhibit a counterexample to one of the results claimed in \cite{McLean}.

We talk a little vaguely in the above paragraph about ``Hilbert space interpolation''.
This vagueness is justified in Section~\ref{sec:unique} which makes clear that, for $0<\theta<1$, there is only one method of interpolation of a pair of compatible Hilbert spaces $\overline H=(H_0,H_1)$ which produces an interpolation space $\overline H_\theta$ that is a geometric interpolation space of exponent $\theta$ (in the terminology of \S\ref{sec:unique}).
Concretely this intermediate space is given both by the real interpolation methods, the $K$- and $J$-methods with $q=2$, and by the complex interpolation method: to emphasise, these methods give the identical interpolation space with identical norm (with the choice of normalisations we have made for the $K$- and $J$-methods).
We will, throughout this section, use  $\overline H_\theta$ and $(H_0,H_1)_\theta$ as our notations for this interpolation space and $\|\cdot\|_{\overline H_\theta}$ as our notation for the norm, so that $\overline H_\theta = (H_0,H_1)_\theta$ and $\|\cdot\|_{\overline H_\theta}$ are abbreviations for $(H_0,H_1)_{\theta,2} = K_{\theta,2}(\overline H)= J_{\theta,2}(\overline H)$ and
$\|\cdot\|_{K_{\theta,2}(\overline H)}=\|\cdot\|_{J_{\theta,2}(\overline H)}$, respectively, the latter defined with the normalisation \eqref{eq:Nchoice}.

\subsection{The spaces  \texorpdfstring{$H^s(\R^n)$}{Hs(Rn)}}

Our function space notations and definitions will be  those in \cite{McLean}. For $n\in\N$ let $\scrS(\R^n)$ denote the Schwartz space of smooth rapidly decreasing functions, and $\scrS^*(\R^n)$ its dual space, the space of tempered distributions. For $u\in \scrS(\R^n)$, $v\in \scrS^*(\R^n)$, we denote by $\langle u,v\rangle$ the action of $v$ on $u$, and we embed $L^2(\R^n)\supset \scrS(\R^n)$ in $\scrS^*(\R^n)$ in the usual way, i.e., $\langle u,v\rangle:= (u,v)$, where $(\cdot,\cdot)$ denotes the usual inner product on $L^2(\R^n)$, in the case that $u\in \scrS(\R^n)$, $v\in L^2(\R^n)$.  We define the Fourier transform $\hat{u}={\cal F} u\in \scrS(\R^n)$, for $u\in \scrS(\R^n)$, by
\begin{align*}
\hat{u}(\xi):= \frac{1}{(2\pi)^{n/2}}\int_{\R^n}\re^{-\ri \xi\cdot x}u(x)\,\rd x ,
\quad \mbox{for }\xi\in\R^n, 
\end{align*}
and then extend $\mathcal{F}$ to a mapping from $\scrS^*(\R^n)$ to $\scrS^*(\R^n)$ in the canonical way, e.g., \cite{McLean}. For $s\in \R$
we define the Sobolev space $H^s(\R^n)\subset \scrS^*(\R^n)$ by
\begin{equation} \label{eq:Hsdef}
H^s(\R^n):=\bigg\{u\in \scrS^*(\R^n): \|u\|_{H^s(\R^n)} :=
\bigg(\int_{\R^n}(1+|\xi|^2)^{s}|\hat{u}(\xi)|^2\,\rd \xi\bigg)^{1/2}<\infty\bigg\}.
\end{equation}
$\scrD(\R^n)\subset \scrS(\R^n)\subset H^s(\R^n)$ are dense in $H^s(\R^n)$ \cite{McLean} (for an open set $\Omega$, $\scrD(\Omega)$ denotes the space of those $u\in C^\infty(\Omega)$ that are compactly supported in $\Omega$). Hence and from \eqref{eq:Hsdef} it is clear that $H^t(\R^n)$ is continuously and densely embedded in $H^s(\R^n)$, for $s<t$, with $\|u\|_{H^s(\R^n)}\leq \|u\|_{H^t(\R^n)}$, for $u\in H^t(\R^n)$. By Plancherel's theorem, $H^0(\R^n)=L^2(\R^n)$ with equality of norms, so that $H^s(\R^n)\subset L^2(\R^n)$ for $s\geq 0$. Moreover, from the definition \eqref{eq:Hsdef},
\begin{equation} \label{eq:normint}
\|u\|^2_{H^{m}(\R^n)} = \sum_{|\alpha|\leq m}
\binom{m}{|\alpha|}\binom{|\alpha|}{\alpha} 
\|\partial^\alpha u\|_{L^2(\R^n)}^2, \quad \mbox{for } m\in \N_0:=\N\cup \{0\},
\end{equation}
where, for $\alpha = (\alpha_1,...\alpha_n)\in \N_0^n$,
$|\alpha| := \sum_{i=1}^n \alpha_i$,
$\binom{|\alpha|}{\alpha}:=|\alpha|!/(\alpha_1!\cdots\alpha_n!)$,
$\partial^\alpha:= \prod_{i=1}^n\partial^{\alpha_i}_i$, and $\partial_j := \partial /\partial x_j$ (the derivative in a distributional sense).

The following result (\!\cite[Theorem B.7]{McLean}) is all there is to say about $H^s(\R^n)$ and Hilbert space interpolation.
\begin{thm} \label{thm:wholespace} $\{H^s(\R^n):s\in \R\}$ is an exact interpolation scale, i.e., for $s_0,s_1\in \R$, $0<\theta<1$, $(H^{s_0}(\R^n),H^{s_1}(\R^n))_\theta = H^s(\R^n)$, with equality of norms, if $s = s_0(1-\theta)+s_1\theta$.
\end{thm}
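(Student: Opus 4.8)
The plan is to read the result straight off Corollary \ref{thm:choiceoftheta}, exploiting the fact that the Fourier transform simultaneously diagonalises the entire Sobolev scale as a family of weighted $L^2$ spaces over one fixed measure space. Concretely, I would take the ambient vector space to be $V=\scrS^*(\R^n)$, fix $s_0,s_1\in\R$, and set $\overline H=(H^{s_0}(\R^n),H^{s_1}(\R^n))$. For the data of Corollary \ref{thm:choiceoftheta} I would choose the measure space $(\R^n,\mathcal{B}(\R^n),\rd\xi)$ with Lebesgue measure, the map $\mathcal{A}:=\mathcal{F}$ (restricted to $\Sigma(\overline H)$), and the weights $w_j(\xi):=(1+|\xi|^2)^{s_j}$, which are precisely the weights appearing in the defining norm \eqref{eq:Hsdef}. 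The central observation, immediate from \eqref{eq:Hsdef}, is that $w_j>0$ everywhere and that $\mathcal{F}$ maps $H^{s_j}(\R^n)$ isometrically onto $L^2(\R^n,w_j\,\rd\xi)$; since $\mathcal{F}$ is a bijection of $\scrS^*(\R^n)$ whose image already contains the dense subspace $\mathcal{F}(\scrS(\R^n))$, this isometry is onto, hence a unitary isomorphism, exactly as the corollary demands.

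With these hypotheses in place, the step that finishes the proof is a single weight computation:
\[
w_\theta = w_0^{1-\theta}w_1^\theta = (1+|\xi|^2)^{(1-\theta)s_0+\theta s_1} = (1+|\xi|^2)^s, \qquad s:=(1-\theta)s_0+\theta s_1.
\]
Thus the intermediate space $H^\theta$ produced by Corollary \ref{thm:choiceoftheta} is $\mathcal{F}^{-1}L^2(\R^n,(1+|\xi|^2)^s\,\rd\xi)$, which by \eqref{eq:Hsdef} is $H^s(\R^n)$ with the identical norm. The corollary then gives $(H^{s_0}(\R^n),H^{s_1}(\R^n))_\theta = K_{\theta,2}(\overline H)=J_{\theta,2}(\overline H)=H^s(\R^n)$, with equality of norms, for every $0<\theta<1$; since $s_0,s_1$ are arbitrary this is exactly the claim that $\{H^s(\R^n):s\in\R\}$ is an exact interpolation scale.

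Before invoking the corollary I would record the routine checks it requires: that $\overline H$ is a genuine compatible pair (both are Hilbert spaces, continuously and nestedly embedded in $V=\scrS^*(\R^n)$, since $H^{\max(s_0,s_1)}(\R^n)\subset H^{\min(s_0,s_1)}(\R^n)$); that $\mathcal{A}=\mathcal{F}$ is a well-defined linear map from $\Sigma(\overline H)=H^{\min(s_0,s_1)}(\R^n)$ into the measurable functions on $\R^n$ (every element of any $H^s(\R^n)$ has a locally $L^2$, hence measurable, Fourier transform); and that the nesting yields $\Delta(\overline H)\subset H^\theta\subset\Sigma(\overline H)$, as needed for an intermediate space. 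I do not expect a substantive obstacle, because the full force of the theorem has been front-loaded into the Hilbert-space machinery of Section \ref{sec:interpHil}: the only genuinely content-bearing move is recognising that $\mathcal{F}$ supplies the required unitary diagonalisation, after which equality of norms is automatic. The point deserving the most care is the \emph{surjectivity} (as opposed to mere isometry) of $\mathcal{F}:H^{s_j}(\R^n)\to L^2(\R^n,w_j\,\rd\xi)$, which is what upgrades \eqref{eq:Hsdef} from an isometric embedding to the unitary isomorphism on which Corollary \ref{thm:choiceoftheta} rests.
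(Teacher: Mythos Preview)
Your proposal is correct and follows exactly the route taken in the paper: apply Corollary~\ref{thm:choiceoftheta} with $\mathcal{A}=\mathcal{F}$, $\mathcal{X}=\R^n$ equipped with Lebesgue measure, and the Sobolev weights, then observe that $w_0^{1-\theta}w_1^\theta$ is the weight defining $H^s(\R^n)$ for $s=(1-\theta)s_0+\theta s_1$. The only cosmetic discrepancy is that the paper writes $w_j(\xi)=(1+|\xi|^2)^{s_j/2}$ while you take $w_j(\xi)=(1+|\xi|^2)^{s_j}$; your choice is the one consistent with the hypothesis of Corollary~\ref{thm:choiceoftheta} (unitary isomorphism onto $L^2(\mathcal{X},M,w_j\mu)$, with the norm as in Theorem~\ref{thm:L2case}), so if anything you have silently corrected a typo.
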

\begin{proof} This follows from Corollary \ref{thm:choiceoftheta}, applied with $\mathcal{A}=\mathcal{F}$, $\mathcal X=\R^n$, and $w_j(\xi) = (1+|\xi|^2)^{s_j/2}$. 
\end{proof}


\subsection{The spaces  \texorpdfstring{$H^s(\Omega)$}{Hs(Omega)}}

For $\Omega\subset \R^n$ there are at least two definitions of $H^s(\Omega)$ in use (equivalent if $\Omega$ is sufficiently regular). Following \cite{McLean} (or see \cite[Section~4.2.1]{Triebel78ITFSDO}), we will define
$$
H^s(\Omega):= \big\{u\in \scrD^*(\Omega): u = U|_\Omega, \mbox{ for some } U\in H^s(\R^n)\big\},
$$
where $\scrD^*(\Omega)$ denotes the space of Schwartz distributions, the continuous linear functionals on $\scrD(\Omega)$ \cite[p.~65]{McLean}, and $U|_\Omega$ denotes the restriction of $U\in \scrD^*(\R^n)\supset \scrS^*(\R^n)$ to $\Omega$.  $H^s(\Omega)$ is endowed with the norm
$$
\|u\|_{H^s(\Omega)} := \inf \left\{\|U\|_{H^s(\R^n)}:U|_\Omega = u\right\}, \quad \mbox{for } u\in H^s(\Omega).
$$
With this norm, for $s\in \R$, $H^s(\Omega)$ is a Hilbert space, $\scrD(\overline \Omega):= \{U|_\Omega:U\in \scrD(\R^n)\}$ is dense in $H^s(\Omega)$, and  $H^t(\Omega)$ is continuously and densely embedded in $H^s(\Omega)$ with $\|u\|_{H^s(\Omega)}\leq \|u\|_{H^t(\Omega)}$,  for $s<t$ and $u\in H^t(\Omega)$ \cite{McLean}. Further $L^2(\Omega)=H^0(\Omega)$ with equality of norms, so that $H^s(\Omega)\subset L^2(\Omega)$ for $s>0$.

For $m\in\N_0$, let
$$
W^m_2(\Omega):=\big\{u\in L^2(\Omega):\partial^\alpha u \in L^2(\Omega)\mbox{ for } 
|\alpha|\leq m\big\}
$$
(our notation is that of \cite{McLean} and other authors, but the notation $H^m(\Omega)$ for this space is very common, e.g.~\cite{LiMaI}). $W^m_2(\Omega)$ is a Hilbert space with the norm
$$
\|u\|_{W_2^m(\Omega)} := \Bigg(\sum_{|\alpha|\leq m} a_{\alpha,m} \|\partial^\alpha u\|_{L^2(\Omega)}^2\Bigg)^{1/2},
$$
for any choice of positive coefficients $a_{\alpha,m}$. The usual choice is $a_{\alpha,m}=1$, but, comparing with \eqref{eq:normint}, we see that the choice
$
a_{\alpha,m} = \binom{m}{|\alpha|}\binom{|\alpha|}{\alpha}
$
 ensures that
$
\|u\|_{H^m(\Omega)} \geq \|u\|_{W_2^m(\Omega)}$, for $u\in H^m(\Omega),
$
 with equality when $\Omega = \R^n$. Clearly, $H^m(\Omega)$ is continuously embedded in $W^m_2(\Omega)$, for all $\Omega$.

Whether $H^s(\Omega)$ is an interpolation scale depends on the smoothness of $\Omega$. As usual (see, e.g., \cite[p.~90]{McLean}),  for $m\in \N_0$ we will say that $\Omega\subset \R^n$ is a $C^{m}$ open set if $\partial \Omega$ is bounded and if, in a neighbourhood of every point on $\partial \Omega$, $\partial \Omega$ is (in some rotated coordinate system) the graph of a $C^m$ function, and the part of $\Omega$ in that neighbourhood is part of the hypograph of the same function. Likewise, for $0<\beta\leq 1$, we will say that $\Omega$ is a $C^{0,\beta}$ open set, if it is a $C^0$ open set and $\partial \Omega$ is locally the graph of a function that is H\"older continuous of index $\beta$. In particular, a $C^{0,1}$ or {\em Lipschitz} open set has boundary that is locally the graph of a Lipschitz continuous function.
We say that $\{x\in\R^n:x_n<\ell(x_1,\ldots,x_{n-1})\}$ is a \emph{Lipschitz hypograph} if $\ell:\R^{n-1}\to\R$ is a Lipschitz function.

Let $\mathcal{R}:H^s(\R^n)\to H^s(\Omega)$ be the restriction operator, i.e., $\mathcal{R}U=U|_\Omega$, for $U\in H^s(\R^n)$: this is an operator with norm one for all $s\in \R$. It is clear that $W^m_2(\Omega) = H^m(\Omega)$, with equivalence of norms, if there exists a continuous {\em extension operator} $\mathcal{E}:W^m_2(\Omega)\to H^m(\R^n)$ that is a right inverse to $\mathcal{R}$, so that $\mathcal{RE}u = u$, for all $u\in W^m_2(\Omega)$. Such an extension operator is also a continuous extension operator $\mathcal{E}: H^m(\Omega) \to H^m(\R^n)$. An extension operator $\mathcal{E}_s: H^s(\Omega) \to H^s(\R^n)$ exists for all $\Omega$ and all $s\in \R$: for $u\in H^s(\Omega)$, set $U:= \mathcal{E}_s u$ to be the unique minimiser in $H^s(\R^n)$ of $\|U\|_{H^s(\R^n)}$ subject to $U|_\Omega = u$ (see \cite[p.~77]{McLean}). These operators $\mathcal{E}_s$ have norm one for all $s\in \R$ and all $\Omega$. But whether $H^s(\Omega)$ is an interpolation scale, for some range of $s$, depends on the existence of
an extension operator which, {\em simultaneously}, maps $H^{s}(\Omega)$ to $H^s(\R^n)$, for two distinct values of $s$. The following lemma is a quantitative version of standard arguments, e.g.~\cite[Section 4.3]{Triebel78ITFSDO}.

\begin{lem} \label{lem:ext} Suppose that $s_0\leq s_1$, $0<\theta<1$, and set $s = s_0(1-\theta) + s_1\theta$, $\overline H =(H^{s_0}(\Omega),H^{s_1}(\Omega))$. Then $H^s(\Omega)\subset \overline H_\theta = (H^{s_0}(\Omega),H^{s_1}(\Omega))_\theta$, with $\|u\|_{\overline H_\theta}\leq \|u\|_{H^s(\Omega)}$, for $u\in H^s(\Omega)$. If also, for some $\lambda_0,\lambda_1\geq 1$,
 $\mathcal{E}:H^{s_0}(\Omega)\to H^{s_0}(\R^n)$ is an extension operator, with $\|\mathcal{E}u\|_{H^{s_j}(\R^n)}\leq \lambda_j \|u\|_{H^{s_j}(\Omega)}$ for $u\in H^{s_1}(\Omega)$ and $j=0,1$, then $H^s(\Omega)=\overline H_\theta$ with equivalence of norms, precisely with
 $$
 \lambda_0^{\theta-1}\lambda_1^{-\theta}\|u\|_{H^s(\Omega)}\leq \|u\|_{\overline H_\theta} \leq   \|u\|_{H^s(\Omega)}, \quad \mbox{for } u\in H^s(\Omega).
 $$
 Further, $\{H^s(\Omega): s_0\leq s\leq s_1\}$ is an interpolation scale.
\end{lem}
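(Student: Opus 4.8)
The plan is to realise both the restriction operator $\mathcal{R}$ and the given extension operator $\mathcal{E}$ as couple maps between the whole-space pair $(H^{s_0}(\R^n),H^{s_1}(\R^n))$ and the $\Omega$-pair $\overline H=(H^{s_0}(\Omega),H^{s_1}(\Omega))$, and then to exploit the fact (Theorem \ref{thm:interp}(i)) that the $K$-method produces interpolation spaces that are exact of exponent $\theta$, together with the exact scale property of the whole-space spaces (Theorem \ref{thm:wholespace}), which identifies $(H^{s_0}(\R^n),H^{s_1}(\R^n))_\theta=H^s(\R^n)$ with equal norms.

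For the upper bound (the first assertion) I would note that, since $s_0\le s_1$, the space $\Sigma$ of each pair is the lower-index space, and that $\mathcal{R}$ restricts to a norm-one map $H^{s_j}(\R^n)\to H^{s_j}(\Omega)$ for $j=0,1$, hence is a couple map; exactness of exponent $\theta$ then gives $\|\mathcal{R}\|_{H^s(\R^n),\overline H_\theta}\le 1$. Applying this to the norm-minimising extension $\mathcal{E}_s u\in H^s(\R^n)$, which satisfies $\mathcal{R}\mathcal{E}_s u=u$ and $\|\mathcal{E}_s u\|_{H^s(\R^n)}=\|u\|_{H^s(\Omega)}$, would yield $u\in\overline H_\theta$ with $\|u\|_{\overline H_\theta}\le\|u\|_{H^s(\Omega)}$.

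For the lower bound I would use that the hypothesis says precisely that $\mathcal{E}$ is a couple map $\overline H\to(H^{s_0}(\R^n),H^{s_1}(\R^n))$ with $\|\mathcal{E}\|_{H^{s_j}(\Omega),H^{s_j}(\R^n)}\le\lambda_j$; exactness of exponent $\theta$ then gives $\|\mathcal{E}\|_{\overline H_\theta,H^s(\R^n)}\le\lambda_0^{1-\theta}\lambda_1^\theta$. Since $\mathcal{R}\mathcal{E}=\mathrm{id}$ and $\mathcal{R}$ is norm-one from $H^s(\R^n)$ to $H^s(\Omega)$, every $u\in\overline H_\theta$ then lies in $H^s(\Omega)$ with $\|u\|_{H^s(\Omega)}\le\lambda_0^{1-\theta}\lambda_1^\theta\|u\|_{\overline H_\theta}$. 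Rearranging and combining with the upper bound gives the stated two-sided estimate, and in particular $H^s(\Omega)=\overline H_\theta$ with equivalence of norms.

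For the interpolation-scale claim I would first observe that the two displayed bounds hold for every intermediate exponent $\sigma\in[s_0,s_1]$, so each $H^\sigma(\Omega)$ coincides, with equivalent norms, with the corresponding interpolation space of the endpoint pair. The key further observation is that the single operator $\mathcal{E}$, a priori only controlled at $s_0$ and $s_1$, is by the identity just proved (applied to $\mathcal{E}$ itself via exactness of exponent $\theta$) a bounded extension $H^\sigma(\Omega)\to H^\sigma(\R^n)$ for every $\sigma\in[s_0,s_1]$. Hence, for any $s,t\in[s_0,s_1]$ (say $s\le t$), $\mathcal{E}$ is a simultaneous extension for the sub-pair $(H^s(\Omega),H^t(\Omega))$, and re-running the two couple-map arguments above for this sub-pair would yield $(H^s(\Omega),H^t(\Omega))_\eta=H^\sigma(\Omega)$ with $\sigma=(1-\eta)s+\eta t$, which is exactly the interpolation-scale property; alternatively one could invoke the reiteration theorem, Theorem \ref{thm:interp}(vii). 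I expect the only point needing care to be this last step, namely feeding the just-proved identity back in to upgrade the endpoint control of $\mathcal{E}$ to control at all intermediate exponents, which is what makes the whole family, rather than just $H^s(\Omega)$ alone, an interpolation scale. Note the scale is in general not exact, since $\lambda_0,\lambda_1$ may exceed $1$.
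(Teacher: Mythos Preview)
Your proposal is correct and follows essentially the same route as the paper: the upper bound via the norm-one restriction $\mathcal{R}$ applied to the minimal extension $\mathcal{E}_s u$, and the lower bound via the couple map $\mathcal{E}$ together with $\mathcal{R}\mathcal{E}=\mathrm{id}$, both using exactness of exponent $\theta$ (Theorem~\ref{thm:interp}(i)) and the exact whole-space scale (Theorem~\ref{thm:wholespace}).

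The only place you diverge is the final ``interpolation scale'' claim. The paper dispatches this in one line by invoking the Hilbert-space reiteration result Theorem~\ref{simpler}(i): once each $H^\sigma(\Omega)$ is identified (with equivalent norms) with $(H^{s_0}(\Omega),H^{s_1}(\Omega))_{\theta_\sigma}$, that theorem immediately gives $(H^s(\Omega),H^t(\Omega))_\eta=(H^{s_0}(\Omega),H^{s_1}(\Omega))_{(1-\eta)\theta_s+\eta\theta_t}=H^\sigma(\Omega)$. Your bootstrap of $\mathcal{E}$ to all intermediate exponents and re-running the couple-map argument for sub-pairs is also valid, but it is doing by hand what reiteration already gives; your alternative of citing Theorem~\ref{thm:interp}(vii) works too, though the Hilbert-specific Theorem~\ref{simpler}(i) is the more natural reference here.
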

\begin{proof} By Theorem \ref{thm:wholespace}, $(H^{s_0}(\R^n),H^{s_1}(\R^n))_\theta = H^s(\R^n)$ with equality of norms. For all $t\in \R$, $\mathcal{R}:H^t(\R^n)\to H^t(\Omega)$ has norm one. Thus, by Theorem \ref{thm:interp} (i), $H^s(\Omega) = \mathcal{R}(H^s(\R^n)) \subset \overline H_\theta$ and $\mathcal{R}:H^s(\R^n) \to \overline H_\theta$ with norm one, so that, for $u\in H^s(\Omega)$, $\|u\|_{\overline H_\theta} = \|\mathcal{R}\mathcal{E}_s u\|_{\overline H_\theta} \leq \|\mathcal{E}_s u\|_{H^s(\R^n)}= \|u\|_{H^s(\Omega)}$, where $\mathcal{E}_su$ is the extension with minimal $H^s(\R^n)$ norm, described above.

If also the extension operator $\mathcal{E}$ has the properties claimed, then, by Theorem \ref{thm:interp} (i), $\mathcal{E}(\overline H_\theta) \subset H^s(\R^n)$ so that $\overline H_\theta = \mathcal{R}\mathcal{E}(\overline H_\theta) \subset \mathcal{R}(H^s(\R^n))=H^s(\Omega)$. Further, $\mathcal{E}:\overline H_\theta \to H^s(\R^n)$ with norm $\leq \lambda_0^{1-\theta}\lambda_1^\theta$, so that, for $u\in H^s(\Omega)=\overline H_\theta$, $\|u\|_{H^s(\Omega)}=\|\mathcal{R}\mathcal{E} u\|_{H^s(\Omega)} \leq \|\mathcal{E}u\|_{H^s(\R^n)} \leq \lambda_0^{1-\theta}\lambda_1^\theta \|u\|_{\overline H_\theta}$.

Hence, noting Theorem \ref{simpler} (i), $\{H^s(\Omega): s_0\leq s\leq s_1\}$ is an interpolation scale.
\end{proof}

\begin{example} \label{ex:hs} As an example, consider the case that $\Omega$ is the half-space $\Omega = \{x=(x_1,...,x_n):x_1>0\}$, $s_0=0$, and $s_1=1$. In this case a simple extension operator is just reflection: $\mathcal{E}u(x) := u(x)$, for $x_1\geq 0$, and $\mathcal{E}u(x) := u(x^\prime)$, for $x_1<0$, where $x^\prime = (-x_1, x_2,...,x_n)$. In this example $\mathcal{E}:H^s(\Omega)\to H^s(\R^n)$ has norm $2$ for $s=0,1$ and, applying Lemma \ref{lem:ext}, $H^s(\Omega) = \overline H_s := (L^2(\Omega),H^1(\Omega))_s$, for $0<s<1$, with
$$
\frac{1}{2}\|u\|_{H^s(\Omega)}\leq \|u\|_{\overline H_s} \leq   \|u\|_{H^s(\Omega)}, \quad \mbox{for } u\in H^s(\Omega).
 $$
\end{example}

The construction of a continuous extension operator $\mathcal{E}:W^m_2(\Omega)\to H^m(\R^n)$, for each $m\in \N_0$ in the case $\Omega$ Lipschitz, dates back to Calder\'on \cite{Calderon61}. Stein \cite[p.~181]{Stein}, in the case $\Omega$ Lipschitz, constructed an extension operator $\mathcal{E}:W^m_2(\Omega)\to H^m(\R^n)$, not depending on $m\in \N_0$, that is continuous for all $m$. It is well known that if an open set is merely $C^{0,\beta}$, for some $\beta<1$, rather than $C^{0,1}$, then, for each $m\in\N$, there may exist no extension operator $\mathcal{E}:W^m_2(\Omega)\to H^m(\R^n)$, so that $H^m(\Omega)\subsetneqq W^m_2(\Omega)$. This is the case, for example, for the cusp domain in Lemma \ref{lem:segment} below: see \cite[p.~88]{Maz'ya}. (Here, as usual, {\em domain} means connected open set.)
A strictly larger class than the class of $C^{0,1}$ domains for which continuous extension operators do exist is the class of {\em locally uniform} domains \cite{Jones}.

\begin{defn} \label{jones} A domain $\Omega\subset \R^n$ is said to be {\em $(\epsilon,\delta)$ locally uniform} if, between any pair of points $x$, $y\in \Omega$ such that
$|x-y| < \delta$, there is a rectifiable arc $\gamma \subset \Omega$ of length at most $|x-y|/\epsilon$ and having the property
that, for all $z \in \gamma$,
$$
\dist(z, \partial \Omega) \geq \frac{\epsilon|z-x|\,|z-y|}{|x-y|}\, .
$$
\end{defn}

All Lipschitz domains are locally uniform, but the class of locally uniform domains contains also sets $\Omega\subset \R^n$ with wilder, fractal boundaries, in fact with $\partial \Omega$ having any Hausdorff
 dimension in $[n-1,n)$ \cite[p.~73]{Jones}.
Jones \cite{Jones} proves existence of an extension operator $\mathcal{E}:W^m_2(\Omega)\to H^m(\R^n)$ for each $m\in \N$ when $\Omega$ is locally uniform. More recently the following uniform result is proved.

\begin{thm}[Rogers, \cite{Rogers}] \label{rogers} If $\Omega\subset \R^n$ is an $(\epsilon,\delta)$ locally uniform domain then there exists an extension operator $\mathcal{E}:W^m_2(\Omega)\to H^m(\R^n)$, not depending on $m\in \N_0$, that is continuous for all $m$.
\end{thm}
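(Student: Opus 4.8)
The plan is to follow the Whitney--reflection strategy of Jones \cite{Jones} for building $\mathcal{E}$ on $\R^n\setminus\overline\Omega$, but to replace the one $m$-dependent ingredient of that construction --- the local polynomial of best approximation of degree $m-1$ --- by a single averaging operator that makes sense for all orders at once. First I would take a Whitney decomposition of the open set $\R^n\setminus\overline\Omega$ into dyadic cubes $\{Q\}$ whose side lengths $\ell(Q)$ are comparable to $\dist(Q,\partial\Omega)$ and whose mild dilations have bounded overlap, together with a subordinate smooth partition of unity $\{\varphi_Q\}$ satisfying $|\partial^\alpha\varphi_Q|\lesssim \ell(Q)^{-|\alpha|}$ for every $\alpha$. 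This much is classical and independent of $m$.

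The $(\epsilon,\delta)$ hypothesis enters in the assignment of a reflected cube. For each Whitney cube $Q$ with $\ell(Q)\lesssim\delta$ I would invoke Definition \ref{jones}: the rectifiable arc joining a point near $Q$ to a point well inside $\Omega$, together with the lower bound on $\dist(\cdot,\partial\Omega)$ along that arc, produces an interior cube $Q^*\subset\Omega$ with $\ell(Q^*)\approx\ell(Q)$ and $\dist(Q,Q^*)\lesssim\ell(Q)$, the constants depending only on $\epsilon,\delta,n$. Cubes with $\ell(Q)\gtrsim\delta$ lie a fixed distance from the bounded set $\partial\Omega$ and are handled by a coarse fixed reflection (or cut off), contributing nothing to the high-frequency estimates. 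The heart of the degree-independence is then the choice of the local operator $A_Q$: rather than projecting $u|_{Q^*}$ onto polynomials of degree $m-1$, I would let $A_Qu$ be the transport to $Q$, under an affine map $Q\to Q^*$, of a single fixed smooth averaging kernel rescaled to the Whitney scale $\ell(Q)$, chosen so as to reproduce polynomials of \emph{every} degree in an averaged sense. This $A_Q$ involves no choice of $m$, and I then set
\[
\mathcal{E}u := u \ \text{ on } \Omega, \qquad \mathcal{E}u := \sum_Q \varphi_Q\,A_Qu \ \text{ on } \R^n\setminus\overline\Omega.
\]

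For each fixed $m$ the required bound $\|\mathcal{E}u\|_{H^m(\R^n)}\lesssim_m\|u\|_{W^m_2(\Omega)}$ would be assembled from: (a) a Bramble--Hilbert / Poincar\'e estimate $\|A_Qu-\pi\|_{L^2(Q)}\lesssim \ell(Q)^m|u|_{W^m_2(Q^{**})}$, where $\pi$ is a suitable reference polynomial of degree $<m$ and $Q^{**}\supset Q^*$ is a slightly enlarged interior region, valid for all $m$ with the \emph{same} universal kernel; (b) absorption of the factors $\ell(Q)^{-|\alpha|}$ coming from differentiating $\varphi_Q$: since $\sum_Q\varphi_Q\equiv1$ near $\partial\Omega$ forces $\sum_Q\partial^\alpha\varphi_Q\equiv0$ for $|\alpha|\geq1$, one may subtract a shared reference polynomial $\pi$ before summing, after which (a) gives $\ell(Q)^m$, which dominates $\ell(Q)^{-|\alpha|}$ for $|\alpha|\leq m$; (c) a counting argument showing each point of $\Omega$ lies in boundedly many enlarged reflected regions $Q^{**}$ at each dyadic scale, so that squaring and summing the local $L^2$ contributions returns a global bound by $\|u\|_{W^m_2(\Omega)}$; and (d) a matching argument across $\partial\Omega$, most cleanly obtained by first proving the estimate for $u\in C^\infty(\Omega)\cap W^m_2(\Omega)$ --- where $A_Qu\to u$ as $Q\to\partial\Omega$ guarantees that $\mathcal{E}u$ has no jump in its derivatives up to order $m-1$, hence lies in $H^m(\R^n)$ --- and then passing to general $u$ by the Meyers--Serrin density of such smooth functions in $W^m_2(\Omega)$, which holds for arbitrary open $\Omega$.

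I expect the main obstacle to be exactly the uniformity in $m$: proving that the \emph{single} kernel-based operator $A_Q$ satisfies the local approximation estimate (a) for all orders $m$ simultaneously, and that the bounded-overlap constant in (c) for the non-injective reflection $Q\mapsto Q^*$ can be taken independent of both $m$ and the scale $\ell(Q)$, uniformly down to the threshold $\delta$. In Jones' construction the polynomial degree is locked to $m$, which makes (a) and (c) comparatively routine; the entire content of the sharper statement of Rogers is to decouple the degree from $m$, and this forces both the careful design of a universal averaging operator and a correspondingly more delicate chaining of the local estimates across neighbouring Whitney cubes.
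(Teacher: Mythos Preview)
The paper does not prove this theorem at all: it is stated with attribution to Rogers \cite{Rogers} and used as a black box, so there is no proof in the paper to compare against. What you have written is a proof \emph{sketch} of Rogers' actual result, and as such it is broadly faithful to the strategy of \cite{Rogers}: Whitney decomposition of the complement, Jones-style reflection $Q\mapsto Q^*$ using the $(\epsilon,\delta)$ geometry, and---this is the point---replacement of Jones' degree-$(m-1)$ polynomial projection by a single scale-adapted averaging operator that reproduces polynomials of all degrees, so that the operator $\mathcal{E}$ no longer depends on $m$.

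That said, what you have is an outline, not a proof, and the places where you hedge are exactly where the work lies. In particular: (i) the phrase ``reproduce polynomials of every degree in an averaged sense'' needs to be made precise---in Rogers' paper this is done via a carefully chosen kernel with vanishing moments, and the construction of such a kernel is not trivial; (ii) your step (d), matching across $\partial\Omega$, glosses over the fact that for a locally uniform domain $\partial\Omega$ can have positive Lebesgue measure is false (actually $|\partial\Omega|=0$ for such domains, but this itself requires an argument), and the claim that $A_Qu\to u$ as $Q\to\partial\Omega$ guarantees no jump in derivatives needs a quantitative rate to be useful; (iii) the bounded-overlap assertion in (c) for the reflected cubes $Q^{**}$ is genuinely delicate in the locally uniform setting and is one of the main technical contributions of \cite{Rogers}. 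None of this is wrong as a plan, but turning it into a proof would essentially mean reproducing a substantial portion of Rogers' paper; for the purposes of \emph{this} paper, the citation is the proof.
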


The following uniform extension theorem for the spaces $H^s(\Omega)$ is a special case of a much more general uniform extension theorem for Besov spaces \cite{Rychkov}, and generalises Stein's classical result to negative~$s$. Rychkov's \cite{Rychkov} result is stated for Lipschitz hypographs and bounded Lipschitz domains, but his localisation arguments for bounded domains  \cite[p.~244]{Rychkov} apply equally to all Lipschitz open sets.

\begin{thm}[Rychkov, \cite{Rychkov}]\label{rychkov} If $\Omega\subset \R^n$ is a Lipschitz open set
or a Lipschitz hypograph,
then there exists an extension operator $\mathcal{E}:H^s(\Omega)\to H^s(\R^n)$, not depending on $s\in \R$, that is continuous for all~$s$.
\end{thm}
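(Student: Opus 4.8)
The plan is to follow the strategy behind Rychkov's theorem, which has three stages: reduce to a model half-space geometry; build an explicit, $s$-independent extension operator from a \emph{one-sided} Calder\'on reproducing formula; and verify boundedness uniformly in $s$ via a local-means (Littlewood--Paley) description of the norms. First I would reduce a Lipschitz open set to a Lipschitz hypograph $\Omega = \{x=(x',x_n)\in\R^n : x_n < \ell(x')\}$, with $\ell$ Lipschitz of constant $L$. Since $\partial\Omega$ is bounded it is covered by finitely many balls on each of which, after a rotation, $\Omega$ coincides with such a hypograph; a smooth partition of unity subordinate to this cover, together with the trivial extension of the interior piece, reduces the assertion to the hypograph model (which is itself already the second case of the statement). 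Multiplication by the smooth cut-offs and the rotations are bounded on $H^s(\R^n)$ with norms controlled uniformly on any bounded range of $s$, so localisation does not destroy the $s$-independence we are after.

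For the hypograph I would construct the operator from a discrete reproducing identity whose building blocks are supported in a cone adapted to $L$. The technical core is to choose $\phi_0,\phi,\psi_0,\psi\in\scrD(\R^n)$, all supported in a cone $\mathcal{C}_L$ so narrow and so oriented that convolution with them, evaluated at a point of $\Omega$, only ever samples values of a distribution \emph{inside} $\Omega$, with $\phi$ and $\psi$ additionally possessing vanishing moments up to an arbitrarily prescribed order, and such that, writing $\phi_j := 2^{jn}\phi(2^j\cdot)$ and $\psi_j := 2^{jn}\psi(2^j\cdot)$ for $j\geq 1$, the Calder\'on reproducing formula
\[
F = \sum_{j=0}^\infty \psi_j * \phi_j * F, \qquad F\in\scrS^*(\R^n),
\]
holds with convergence in $\scrS^*(\R^n)$. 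Constructing such a compactly supported, one-sided pair with many vanishing moments is the heart of the matter; it is carried out on the Fourier side by a Neumann-series perturbation of a standard Littlewood--Paley resolution, the cone support being the ingredient that makes the whole scheme ``see'' only the values of $F$ on $\Omega$.

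The extension is then the synthesis of the intrinsic analysis coefficients. Setting $\lambda_j := \phi_j * f$, the cone support of $\phi_j$ ensures that, at every point of $\Omega$, this convolution samples $f$ only within $\Omega$, so $\lambda_j$ is determined by $f$ alone wherever it is needed; one defines $\mathcal{E}f := \sum_{j\geq 0}\psi_j * \lambda_j$. Because $\psi_j$ has the same one-sided support, restricting this sum back to $\Omega$ invokes only the intrinsic part of each $\lambda_j$, and there the reproducing formula collapses the sum to $f$, giving $\mathcal{E}f|_\Omega = f$; the same series defines a tempered distribution on all of $\R^n$, which is the extension. By construction $\mathcal{E}$ is linear and, crucially, a single operator serving \emph{all} $s$ simultaneously, since nothing in its definition refers to the smoothness index.

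It remains to prove $\mathcal{E}:H^s(\Omega)\to H^s(\R^n)$ bounded, uniformly in $s$. Here I would use the local-means characterisation $\|F\|_{H^s(\R^n)}^2 \approx \sum_{j\geq 0} 2^{2js}\|\phi_j * F\|_{L^2(\R^n)}^2$ (the $F^s_{2,2}=H^s$ identity realised by compactly supported kernels with vanishing moments), and its counterpart for $\psi$. One shows that $\|f\|_{H^s(\Omega)}$ is equivalent to the $\ell^2(2^{js})$-weighted $L^2(\Omega)$ norm of the intrinsic coefficients $\lambda_j$, and then that the synthesis map $\{\lambda_j\}\mapsto\sum_j\psi_j * \lambda_j$ sends such sequences boundedly into $H^s(\R^n)$. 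The vanishing moments of $\phi$ and $\psi$ provide the cross-scale almost-orthogonality estimates $\|\phi_k * \psi_j\|_{L^1}\lesssim 2^{-|j-k|N}$ that make the synthesis bounded, with a constant depending on $s$ only through a fixed, locally uniform factor. The main obstacle throughout is precisely this twofold technical demand: constructing the one-sided, compactly supported reproducing system with sufficiently many vanishing moments, and then pushing the almost-orthogonality estimates through for \emph{negative} $s$, where $f$ is a genuine distribution and Stein's pointwise reflection is unavailable. It is exactly the distributional, convolution-based nature of the construction---never requiring a pointwise restriction of $f$---that extends Stein's result to all $s\in\R$.
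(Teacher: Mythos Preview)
The paper itself gives no proof of this statement: it is quoted as a result of Rychkov, with only the remark that his localisation arguments for bounded Lipschitz domains extend to all Lipschitz open sets. So there is nothing in the paper to compare your proposal against directly.

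That said, your outline is an accurate high-level summary of Rychkov's own argument: the reduction via partition of unity to a Lipschitz hypograph, the one-sided Calder\'on reproducing formula built from $\phi_j,\psi_j$ supported in a cone adapted to the Lipschitz constant, and the local-means characterisation of the $H^s$ norms for the boundedness. One point you gloss over deserves a sentence: as written, ``$\lambda_j := \phi_j * f$'' is intrinsically defined by $f$ only on $\Omega$, yet the synthesis $\sum_j \psi_j * \lambda_j$ evaluated at points of $\R^n\setminus\Omega$ requires $\lambda_j$ outside $\Omega$ as well. Rychkov handles this by extending each scalar function $\lambda_j$ from $\Omega$ to $\R^n$ by a fixed, $s$-independent device (a bi-Lipschitz reflection across $\partial\Omega$) that is bounded on $L^2$; the weighted $\ell^2$ control on $\{2^{js}\|\lambda_j\|_{L^2(\Omega)}\}$ then transfers to the extended sequence, and the synthesis bound follows. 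Without this step your operator is not yet well defined on $\R^n\setminus\Omega$, so you should make the extension of the $\lambda_j$ explicit.
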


Combining Theorems \ref{rogers} and \ref{rychkov} with Lemma \ref{lem:ext} and Theorem \ref{simpler} (i) we obtain the following interpolation result.

\begin{cor} \label{cor:int} If $\Omega\subset \R^n$ is a Lipschitz open set or a Lipschitz hypograph, then $\{H^s(\Omega):s\in \R\}$ is an interpolation scale.
If $\Omega\subset \R^n$ is an $(\epsilon,\delta)$ locally uniform domain then $\{H^s(\Omega):s\geq 0\}$ is an interpolation scale.
\end{cor}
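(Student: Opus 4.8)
The plan is to reduce both assertions to Lemma \ref{lem:ext}, which already yields an interpolation scale on any bounded parameter interval $[s_0,s_1]$ as soon as one has a \emph{single} extension operator that is simultaneously bounded at the two endpoints $s_0$ and $s_1$. The entire content of Theorems \ref{rogers} and \ref{rychkov} is precisely to supply such simultaneously-bounded extension operators, and the only remaining work is to pass from bounded intervals to the full ranges $s\in\R$ and $s\geq 0$. I note throughout that an extension operator has norm at least one (since $\mathcal{R}\mathcal{E}$ is the identity), so the constants $\lambda_0,\lambda_1$ demanded by Lemma \ref{lem:ext} are automatically $\geq 1$.

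For the Lipschitz case I would argue pair-by-pair. Given arbitrary $s,t\in\R$ and $0<\eta<1$, set $s_0=\min(s,t)$, $s_1=\max(s,t)$. Theorem \ref{rychkov} provides one operator $\mathcal{E}$ that is a bounded extension $H^\sigma(\Omega)\to H^\sigma(\R^n)$ for \emph{every} real $\sigma$, so in particular it satisfies the hypotheses of Lemma \ref{lem:ext} at the endpoints $s_0,s_1$ with finite constants. Lemma \ref{lem:ext} then gives that $\{H^\sigma(\Omega):s_0\leq\sigma\leq s_1\}$ is an interpolation scale; since the interpolation-scale property is quantified over pairs (Remark \ref{rem:scales}) and this interval contains both $s$ and $t$, we obtain $(H^s(\Omega),H^t(\Omega))_{\eta,2}=H^\theta(\Omega)$ with $\theta=(1-\eta)s+\eta t$. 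As $s,t$ are arbitrary, $\{H^s(\Omega):s\in\R\}$ is an interpolation scale.

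For the locally uniform case the subtlety is that Theorem \ref{rogers} only supplies an operator $\mathcal{E}$ that is bounded $W^m_2(\Omega)\to H^m(\R^n)$ at \emph{integer} orders $m\in\N_0$, whereas one wishes to interpolate at real orders. I would first record that the existence of this operator forces $W^m_2(\Omega)=H^m(\Omega)$ with equivalent norms for each $m\in\N_0$, so that $\mathcal{E}$ may equally be viewed as a bounded extension operator $H^m(\Omega)\to H^m(\R^n)$ (the finite equivalence constants being absorbed into $\lambda_0,\lambda_1$). Then, given $0\leq s\leq t$, I would choose an integer $m_1\geq t$ and apply Lemma \ref{lem:ext} with $s_0=0$, $s_1=m_1$, using $H^0(\Omega)=L^2(\Omega)=W^0_2(\Omega)$. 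This gives that $\{H^\sigma(\Omega):0\leq\sigma\leq m_1\}$ is an interpolation scale, and since $[0,m_1]$ contains both $s$ and $t$, the required identity follows; arbitrariness of $s,t\in[0,\infty)$ completes the proof.

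The main obstacle, accordingly, is exactly this mismatch in the locally uniform case between an integer-order extension theorem and the real-order interpolation one wants to perform. It is resolved by embedding the given real pair in an integer interval $[0,m_1]$ and relying on the reiteration/stability content of Theorem \ref{simpler}~(i) that is already packaged inside Lemma \ref{lem:ext}: establishing the scale at the two integer endpoints automatically produces the scale at all intermediate real orders, so that no direct extension operator at non-integer orders is ever needed.
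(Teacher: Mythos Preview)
Your proposal is correct and follows essentially the same route as the paper's one-line proof, which simply points to Lemma~\ref{lem:ext} combined with the extension Theorems~\ref{rogers} and~\ref{rychkov} (and Theorem~\ref{simpler}(i), already packaged into the last conclusion of Lemma~\ref{lem:ext}). Your explicit handling of the integer-versus-real order mismatch in the locally uniform case---embedding any $[s,t]\subset[0,\infty)$ in an integer interval $[0,m_1]$ and noting that Rogers' operator forces $W^{m_1}_2(\Omega)=H^{m_1}(\Omega)$---is precisely the detail the paper leaves to the reader.
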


Except in the case $\Omega=\R^n$,  it appears that $\{H^s(\Omega):s\in \R\}$ is not an exact interpolation scale.
In particular, Lemma \ref{lem:segment} below shows that, for $\Omega = (0,a)$ with $0<a<1$,  $\{H^s(\Omega):0\leq s\leq 2\}$ is not an exact interpolation scale, indeed that, for interpolation between $L^2(\Omega)$ and $H^2(\Omega)$, the ratio of the interpolation norm to the intrinsic norm on $H^1(\Omega)$ can be arbitrarily small for small $a$.
Example \ref{ex:1Dfrac} below is a bounded open set $\Omega\subset \R$ for which
\begin{equation} \label{subnot}
H^1(\Omega) \subsetneqq \big(L^2(\Omega),H^2(\Omega)\big)_{1/2},
\end{equation}
so that $\{H^s(\Omega):0\leq s\leq 2\}$ is not an interpolation scale.
The following lemma exhibits \eqref{subnot} for a $C^{0,\beta}$  domain in $\R^2$, for every $\beta\in (0,1)$. These results contradict \cite[Theorem B.8]{McLean} which claims that $\{H^s(\Omega):s\in\R\}$ is an exact interpolation scale for any non-empty open $\Omega\subset\R^n$.
(The error in McLean's proof lies in the wrong deduction of the bound $K(t,U;Y)\le K(t,u;X)$ (in his notation) from $K(t,U;Y)^2\le \|u_0\|_{X_0}^2+t^2\|u_1\|_{X_1}^2$.)

\begin{lem}\label{lem:CuspNotIntp} For some $p > 1$ let $\Omega := \{(x_1,x_2)\in \R^2: 0<x_1<1 \mbox{ and } |x_2|< x_1^p\}$.
Then $\Omega$ is a $C^{0,\beta}$ domain for $\beta = p^{-1}<1$ and \eqref{subnot} holds, so that $\{H^s(\Omega):0\leq s\leq 2\}$ is not an interpolation scale.
\end{lem}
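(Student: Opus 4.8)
The statement bundles two assertions: that $\Omega$ is $C^{0,\beta}$ with $\beta=p^{-1}$, and the strict inclusion \eqref{subnot}. The regularity is routine and I would dispose of it first. Away from the tip $O=(0,0)$ and the two corners $(1,\pm 1)$ the boundary is the union of the smooth arcs $x_2=\pm x_1^p$, and near each corner it is plainly Lipschitz; near $O$ I would rotate coordinates by $\pi/2$, so that $\partial\Omega$ becomes the graph $x_1=|x_2|^{1/p}$ of a function H\"older continuous of index $1/p$, with $\Omega$ locally the corresponding hypograph. As $p>1$ gives $1/p<1$, this exhibits $\Omega$ as a $C^{0,\beta}$ set with $\beta=p^{-1}<1$. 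For the inclusion itself, Lemma~\ref{lem:ext} (with $s_0=0$, $s_1=2$, $\theta=\frac12$) already gives $H^1(\Omega)\subset(L^2(\Omega),H^2(\Omega))_{1/2}$ with norm at most the intrinsic one, so all the work is in the strictness: I must produce $u\in(L^2(\Omega),H^2(\Omega))_{1/2}\setminus H^1(\Omega)$.

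The test function I would use depends on $x_1$ alone and is modelled on the one-dimensional counterexample of Example~\ref{ex:1Dfrac}. Fix $\Psi\in\scrD((0,1))$ with $\Psi\neq 0$, put $\psi_k(x_1):=\Psi(2^{k+1}x_1-1)$ (a bump of scale $\ell_k:=2^{-k}$ and unit amplitude with $\supp\psi_k\subset(2^{-k-1},2^{-k})$), and set $u(x_1,x_2):=v(x_1)$ with $v:=\sum_{k\ge 0}\psi_k$. The supports are disjoint and accumulate only at the tip. Since $\|u\|_{L^2(\Omega)}^2=\sum_k\|\psi_k(x_1)\|_{L^2(\Omega)}^2\approx\sum_k\ell_k^{p+1}<\infty$ (each bump sees the thin cross-section of height $\approx x_1^p\approx\ell_k^p$), we have $u\in L^2(\Omega)$.

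To see $u\notin H^1(\Omega)$ I would use a trace argument, which is the clean half. If some $U\in H^1(\R^2)$ restricted to $u$ on $\Omega$, then its trace on the line $\{x_2=0\}$ lies in $H^{1/2}$, and as the segment $I:=(0,1)\times\{0\}$ lies in $\Omega$ with $U=v(x_1)$ there, the restriction of that trace to $I$ equals $v$; hence $v\in H^{1/2}(I)$. But the Gagliardo seminorm of $v$ is infinite: restricting the double integral to pairs lying in a common $\supp\psi_k$ gives $[v]_{H^{1/2}(I)}^2\ge\sum_k[\psi_k]^2_{H^{1/2}(\supp\psi_k)}$, and because the one-dimensional $H^{1/2}$-seminorm is invariant under the translations and dilations relating $\psi_k$ to $\Psi$, each summand equals the fixed positive constant $[\Psi]^2_{H^{1/2}((0,1))}$, so the sum diverges. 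This contradiction shows no such $U$ exists.

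The membership $u\in(L^2(\Omega),H^2(\Omega))_{1/2}$ is the substantive step, and I expect it to be the main obstacle. I would bound the $K$-functional directly through a scale-threshold splitting: for each $t$ choose $m=m(t)$ and write $u=u_0^t+u_1^t$ with $u_1^t:=\sum_{k\le m}\psi_k(x_1)$ the coarse part, so that $\|u_0^t\|_{L^2(\Omega)}^2\approx\sum_{k>m}\ell_k^{p+1}\approx\ell_m^{p+1}$. Each coarse bump I would extend by the product $\psi_k(x_1)\,\eta(x_2/\ell_k)$, where $\eta\in\scrD(\R)$ with $\eta\equiv1$ on $[-1,1]$; since the cusp is thin ($x_1^p<\ell_k$ on the $k$th annulus) this equals $\psi_k(x_1)$ on $\Omega$, its $H^2(\R^2)$-norm squared is $\approx\ell_k^{-2}$, and these extensions have pairwise disjoint $x_1$-supports, whence $\|u_1^t\|_{H^2(\Omega)}^2\lesssim\sum_{k\le m}\ell_k^{-2}\approx\ell_m^{-2}$. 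Optimising $K(t,u)^2\lesssim\ell_m^{p+1}+t^2\ell_m^{-2}$ over $m$ (i.e.\ $\ell_m\approx t^{2/(p+3)}$) yields $K(t,u)^2\lesssim t^{2(p+1)/(p+3)}$ for small $t$, and then
\begin{equation*}
\|u\|_{(L^2(\Omega),H^2(\Omega))_{1/2}}^2\;\approx\;\int_0^\infty t^{-2}K(t,u)^2\,\rd t\;<\;\infty,
\end{equation*}
the small-$t$ convergence reducing exactly to $2(p+1)/(p+3)-2>-1$, i.e.\ to $p>1$, while large $t$ is trivial since $K(t,u)\le\|u\|_{L^2(\Omega)}$. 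The delicate points I would treat with care are the uniform per-bump $H^2$-extension estimate and the bookkeeping of the split; it is worth noting that the gain over $H^1$ comes not from a cheaper extension (the cost $\ell_k^{-2}$ is $p$-independent) but from the tiny $L^2$-mass $\approx\ell_m^{p+1}$ of the remainder, so that the threshold $p>1$ is precisely the imprint of the sub-Lipschitz cusp.
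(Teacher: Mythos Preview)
Your argument is correct, and it takes a genuinely different route from the paper's.

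The paper also builds the counterexample from bumps depending only on $x_1$, but it uses \emph{nested} cutoffs $\phi_h(x)=\chi(x_1/h)$ (with $\chi$ a fixed even cutoff) rather than your disjoint dyadic bumps. Instead of estimating the $K$-functional directly, it bounds each piece via Lemma~\ref{intnormub}(i), obtaining $\|\phi_h\|_{\overline H_{1/2}}\le\|\phi_h\|_{L^2(\Omega)}^{1/2}\|\phi_h\|_{H^2(\Omega)}^{1/2}=O(h^{(p-1)/4})$ (the $L^2$ bound again exploits the thin cross-section, and the $H^2$ bound uses essentially the same product extension $\chi(x_1/h)\chi(x_2/(2h))$ as yours), and then sums $\psi=\sum_n\phi_{h_n}$ absolutely in $\overline H_{1/2}$ with $h_n=n^{-q}$, $q>4/(p-1)$. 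For the failure of membership in $H^1(\Omega)$ the paper does not use traces: it observes that $H^1(\Omega)\subset L^r(\Omega)$ for every $r<\infty$ (embed $\Omega$ in a smooth bounded domain and apply the Sobolev embedding there), whereas $\psi\ge n$ on a set of measure $\approx h_n^{p+1}$, so $\psi\notin L^r(\Omega)$ for $r$ large enough.

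As for what each buys: the paper's per-piece use of Lemma~\ref{intnormub}(i) is shorter than your threshold-splitting estimate of $K(t,u)$, and the $L^r$ argument avoids trace machinery. On the other hand, your dyadic construction with disjoint supports keeps the bookkeeping clean, your direct $K$-functional bound is self-contained, and your trace argument via the scale-invariance of the one-dimensional $H^{1/2}$ Gagliardo seminorm is arguably the more natural obstruction. Both approaches make the threshold $p>1$ equally transparent --- in the paper as the exponent $(p-1)/4>0$, in yours as the convergence condition $\int_0^1 t^{-4/(p+3)}\,\rd t<\infty$.
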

\begin{proof}
Let $\overline H_\theta:= (H^0(\Omega),H^2(\Omega))_\theta$, for $0<\theta<1$.
Choose an even function $\chi\in C^\infty(\R)$ such that $0\leq \chi(t)\leq 1$ for $t\in\R$, with $\chi(t)=0$ if $|t|>1$, and $\chi(t)=1$ if $|t|<1/2$.
For $0<h\leq 1$ define $\phi_h\in H^2(\Omega)$ by
$\phi_h(x) = \chi(x_1/h)$, $x\in \Omega$.
We observe that $\phi_h(x) = 0$ for $x_1>h$ so that, where $\Omega_h := \{x\in \Omega:0<x_1<h\}$,
$$
\|\phi_h\|_{L^2(\Omega)}^2 \leq \int_{\Omega_h} \rd x = 2\int_0^h x_1^p \rd x_1 = \frac{2h^{p+1}}{p+1} \, .
$$
Further, defining $\phi_h^+(x) = \chi(x_1/h)\chi(x_2/(2h))$, for $x=(x_1,x_2)\in \R^2$ and $0<h\leq 1$, it is clear that $\phi_h= \phi_h^+|_\Omega$.
Moreover, $\|\partial ^\alpha \phi_h^+\|_{L^2(\R^2)} = h^{1-|\alpha|}\|\partial ^\alpha \phi_1^+\|_{L^2(\R^2)}$, for $\alpha\in \N_0^2$.
Thus, using the identity \eqref{eq:normint},
$$
\|\phi_h\|_{H^2(\Omega)} \leq \|\phi^+_h\|_{H^2(\R^2)} = O(h^{-1}), \quad \mbox{as } h\to 0,
$$
so that, applying Lemma \ref{intnormub}(i), $\|\phi_h\|_{\overline H_\theta} = O(h^\beta)$, as $h\to 0$,
where $\beta := (1-\theta)(p+1)/2 - \theta$.
Let $\theta=1/2$, so that $\beta = (p-1)/4>0$.
Put $h_n = n^{-q}$, for $n\in\N$, for some $q>\beta^{-1}$.
Then $\|\phi_{h_n}\|_{\overline H_{1/2}} = O(n^{-q\beta})$ as $n\to\infty$, so that
$\sum_{n=1}^\infty \phi_{h_n}$
is convergent in $\overline H_{1/2}$ to some $\psi\in \overline H_{1/2}$.
Let $\Omega_b$ be some $C^1$ bounded domain  containing $\Omega$.
Then, by the Sobolev embedding theorem (e.g.~\cite[p.~97]{Adams}), $H^1(\Omega_b)\subset L^r(\Omega_b)$ for all $1\leq r<\infty$, so that $H^1(\Omega)\subset L^r(\Omega)$. We will show \eqref{subnot} by showing $\psi\not\in L^r(\Omega)$ if $r$ is sufficiently large.

Clearly, $\psi = \sum_{n=1}^\infty \phi_{h_n}$ satisfies
$\psi(x)\geq n$, for $0<x_1< h_n/2 = n^{-q}/2$,
so that
\begin{align*}
\int_\Omega |\psi|^r \rd x & \geq2\sum_{n=1}^\infty n^r \int_{(n+1)^{-q}/2}^{n^{-q}/2} x_1^p \rd x_1\\
 &= \frac{1}{(p+1)2^{p}}\sum_{n=1}^\infty n^r \left(n^{-q(p+1)}-(n+1)^{-q(p+1)}\right)\geq \frac{q}{2^{p}}\sum_{n=1}^\infty n^r(n+1)^{-q(p+1)-1},
\end{align*}
where in the last step we use the mean value theorem, which gives that, for some $\xi\in (n,n+1)$,
$n^{-t}-(n+1)^{-t} = t\xi^{-t-1}\geq t (n+1)^{-t-1}$, where $t=q(p+1)>0$.
Thus $\psi\not\in L^r(\Omega)$ if
$r-q(p+1)-1 \geq -1$, i.e., if $r \geq q(p+1)$.
Since we can choose any $q>\beta^{-1}$, we see that, in fact, $\overline H_{1/2} \not\subset L^r(\Omega)$ for $r>4(p+1)/(p-1)$.
\end{proof}

\subsection{The spaces \texorpdfstring{$\tH^s(\Omega)$}{tildeHs(Omega)}}
For $s\in\R$ and $\Omega\subset\R^n$ we define $\tH^s(\Omega):= \overline{\scrD(\Omega)}^{H^s(\R^n)}$, the closure of $\scrD(\Omega)$ in $H^s(\R^n)$.
We remark that if $\Omega$ is $C^0$ then $\tH^s(\Omega)=\{u\in H^s(\R^n):\supp{u}\subset \overline{\Omega} \}$ \cite[Theorem 3.29]{McLean}, but that these two spaces are in general different if $\Omega$ is not $C^0$ \cite{ChaHewMoi:13}.
Also, for any $m\in\N_0$, $\tH^m(\Omega)$ is unitarily isomorphic (via the restriction operator $\mathcal{R}$) to $H^m_0(\Omega)$, the closure of $\scrD(\Omega)$ in $H^m(\Omega)$.
For any open $\Omega\subset\R^n$, $\tH^s(\Omega)$ is a natural unitary realisation of the dual space of $H^{-s}(\Omega)$, with duality paring (cf.\ \cite[Theorem 3.14]{McLean})
\[
\langle u,v \rangle_{\tH^{s}(\Omega)\times {H}^{-s}(\Omega)}:= \langle u,V \rangle_{-s},
\quad \mbox{ for } u\in\tH^s(\Omega), \,v\in H^{-s}(\Omega),
\]
where $V\in H^{-s}(\R^n)$ denotes any extension of $v$ with $V|_\Omega=v$, and $\langle \cdot,\cdot \rangle_{-s}$ is the standard duality pairing on $H^s(\R^n)\times H^{-s}(\R^n)$, the natural extension of the duality pairing $\langle\cdot, \cdot\rangle$ on $\scrS(\R^n)\times \scrS^*(\R^n)$.
This result is well known when $\Omega$ is $C^0$ \cite{McLean}; that it holds for arbitrary $\Omega$ is shown in~\cite{ChaHewMoi:13,CoercScreen}.

The following corollary follows from this duality result and Theorem \ref{simpler} (ii).

\begin{cor} \label{cor:duall} Suppose that $s_0\leq s_1$, $0<\theta<1$, and set $s= s_0(1-\theta)+s_1\theta$, $\overline H = (H^{s_0}(\Omega), H^{s_1}(\Omega))$, and $\overline H^* = (\tH^{-s_0}(\Omega), \tH^{-s_1}(\Omega))$. Then $\overline H^*_\theta = (\tH^{-s_0}(\Omega), \tH^{-s_1}(\Omega))_\theta \subset \tH^{-s}(\Omega)$, with $\|u\|_{\tH^{-s}(\Omega)}\leq \|u\|_{\overline H^*_\theta}$, for $u\in \overline H^*_\theta$. Further, $\overline H_\theta = H^s(\Omega)$ if and only if $\overline H^*_\theta = \tH^{-s}(\Omega)$ and, if both these statements are true, then, for $a>1$,
$$
a^{-1} \|u\|_{H^s(\Omega)} \leq \|u\|_{\overline H_\theta},\; \forall u\in H^s(\Omega) \quad
\mbox{ if and only if } \quad \|u\|_{\overline H^*_\theta} \leq a\|u\|_{\tilde H^{-s}(\Omega)},\; \forall u\in \tH^{-s}(\Omega).
$$
\end{cor}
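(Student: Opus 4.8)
The result is, in essence, a transcription by duality of the one-sided inclusion already supplied by Lemma \ref{lem:ext}. The engine is Theorem \ref{simpler}(ii). Since $s_0\le s_1$ we have $\Delta(\overline H) = H^{s_1}(\Omega)$, which is dense in $H^{s_0}(\Omega)$ (density of higher-index Sobolev spaces in lower-index ones) and trivially in $H^{s_1}(\Omega)$; so the hypotheses of Theorem \ref{simpler}(ii) are met and $(\overline H_\theta)^* = \overline H^*_\theta$ with equality of norms, the dual pair being $\overline H^* = \big((H^{s_0}(\Omega))^*,(H^{s_1}(\Omega))^*\big) = (\tH^{-s_0}(\Omega),\tH^{-s_1}(\Omega))$ by the duality result quoted just before the corollary. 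I would record alongside this the isometric identification $(H^s(\Omega))^* = \tH^{-s}(\Omega)$ coming from the same duality result.

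For the first assertion, I would dualise the inclusion $\iota: H^s(\Omega)\hookrightarrow \overline H_\theta$ furnished by Lemma \ref{lem:ext}, which has norm $\le 1$ and dense range (its range contains $\Delta(\overline H)=H^{s_1}(\Omega)$, dense in $\overline H_\theta$ by Theorem \ref{thm:interp}(iv), and $H^{s_1}(\Omega)\subset H^s(\Omega)$ since $s\le s_1$). The adjoint $\iota^*$ maps $(\overline H_\theta)^* = \overline H^*_\theta$ into $(H^s(\Omega))^* = \tH^{-s}(\Omega)$, is injective because $\iota$ has dense range, and has norm $\|\iota^*\|=\|\iota\|\le 1$; concretely it is the natural inclusion $\overline H^*_\theta\hookrightarrow \tH^{-s}(\Omega)$. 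This gives $\overline H^*_\theta\subset \tH^{-s}(\Omega)$ with $\|u\|_{\tH^{-s}(\Omega)}\le \|u\|_{\overline H^*_\theta}$.

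The remaining assertions are then pure adjoint bookkeeping applied to $\iota$. As $\iota$ is an injection with dense range between Banach spaces, it is surjective---hence a topological isomorphism by the bounded inverse theorem---if and only if $\iota^*$ is surjective; since $\iota^*$ is the natural inclusion $\overline H^*_\theta\hookrightarrow \tH^{-s}(\Omega)$, surjectivity of $\iota^*$ says exactly $\overline H^*_\theta=\tH^{-s}(\Omega)$. Equality of $\overline H_\theta$ and $H^s(\Omega)$ as sets is equivalent to $\iota$ being onto (the reverse inclusion being automatic from Lemma \ref{lem:ext}, with norm-equivalence then following from the closed graph theorem), which proves the second assertion. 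For the quantitative third assertion I would assume both spaces coincide, so that $\iota^{-1}$ exists and is bounded: the inequality $a^{-1}\|u\|_{H^s(\Omega)}\le \|u\|_{\overline H_\theta}$ is precisely $\|\iota^{-1}\|\le a$, while $\|u\|_{\overline H^*_\theta}\le a\|u\|_{\tH^{-s}(\Omega)}$ is precisely $\|(\iota^{-1})^*\|\le a$, and these two conditions coincide because $(\iota^{-1})^* = (\iota^*)^{-1}$ and $\|(\iota^{-1})^*\| = \|\iota^{-1}\|$.

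The one genuinely delicate point, which I expect to be the main obstacle, is the claim that $\iota^*$ is the \emph{natural} inclusion of dual Sobolev spaces, i.e.\ that the abstract interpolation duality of Theorem \ref{simpler}(ii) is implemented by the same concrete pairing as the Sobolev duality $\tH^{-s}(\Omega)=(H^s(\Omega))^*$. This must be checked by tracing the construction in Theorem \ref{JKdual}, where the dual is realised by restricting functionals to $\Delta(\overline H)=H^{s_1}(\Omega)$, and verifying that this restriction agrees with the distributional pairing $\langle\cdot,\cdot\rangle_{-s}$ on $H^{s_1}(\Omega)$. Once the two pairings are identified on this common dense subspace, continuity extends the identification to all of $\overline H^*_\theta$, and all the adjoint manipulations above become legitimate.
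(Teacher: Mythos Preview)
Your proof is correct and is precisely the argument the paper intends: the paper's own ``proof'' is merely the one-line remark that the corollary ``follows from this duality result and Theorem \ref{simpler}(ii)'', and you have accurately unpacked what that entails, including correctly flagging the compatibility of the abstract and concrete duality pairings as the one point requiring care.

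Two minor remarks. First, in the ``iff'' step you invoke the equivalence ``$\iota$ surjective $\Leftrightarrow$ $\iota^*$ surjective'' for an injective dense-range map; this is not automatic for general Banach spaces (injectivity of $\iota$ gives only weak-$*$ dense range of $\iota^*$), but is fine here since all spaces are Hilbert, hence reflexive---worth saying explicitly. Second, the first assertion $\overline H^*_\theta\subset \tH^{-s}(\Omega)$ with the norm bound can alternatively be obtained without duality, directly from Theorem \ref{thm:interp}(vi) applied to the closed subspaces $\tH^{-s_j}(\Omega)\subset H^{-s_j}(\R^n)$ together with Theorem \ref{thm:wholespace} and the density Theorem \ref{thm:interp}(iv); but your duality route is the one the paper signals, and is needed anyway for the remaining assertions.
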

Combining this with  Corollary \ref{cor:int}, we obtain the following result.
\begin{cor} \label{cor:tildeint}
If $\Omega\subset \R^n$ is a Lipschitz open set or a Lipschitz hypograph, then $\{\tH^s(\Omega):s\in \R\}$ is an interpolation scale.
If $\Omega\subset \R^n$ is an $(\epsilon,\delta)$ locally uniform domain then $\{\tH^s(\Omega):s\leq 0\}$ is an interpolation scale.
\end{cor}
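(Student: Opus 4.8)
The plan is to derive the $\tH^s$ statements entirely by duality from the corresponding $H^s$ statements, which are already in hand. Corollary \ref{cor:int} tells us that $\{H^s(\Omega):s\in\R\}$ (resp.\ $\{H^s(\Omega):s\geq 0\}$) is an interpolation scale in the Lipschitz (resp.\ locally uniform) case, and Corollary \ref{cor:duall} supplies exactly the device for transferring such a statement to the $\tH^s$ scale. Indeed, the biconditional in Corollary \ref{cor:duall}---that $\overline H_\theta = H^s(\Omega)$ if and only if $\overline H^*_\theta = \tH^{-s}(\Omega)$---is precisely a dictionary between membership of $H^s(\Omega)$ in the relevant interpolation space and membership of $\tH^{-s}(\Omega)$ in the dual interpolation space. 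So no new analysis is needed beyond combining these two corollaries.

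First I would fix $s_0\leq s_1$ in the index range of Corollary \ref{cor:int} and $0<\theta<1$, and set $s := (1-\theta)s_0 + \theta s_1$. Since $\{H^s(\Omega)\}$ is an interpolation scale on that range, we have $(H^{s_0}(\Omega),H^{s_1}(\Omega))_\theta = H^s(\Omega)$, i.e.\ $\overline H_\theta = H^s(\Omega)$ in the notation of Corollary \ref{cor:duall}. Applying Corollary \ref{cor:duall} then yields $\overline H^*_\theta = (\tH^{-s_0}(\Omega),\tH^{-s_1}(\Omega))_\theta = \tH^{-s}(\Omega)$, with equivalence of norms. Writing $t_j := -s_j$ and $t := -s$, and noting that $t = (1-\theta)t_0 + \theta t_1$, this reads $(\tH^{t_0}(\Omega),\tH^{t_1}(\Omega))_\theta = \tH^t(\Omega)$, which is the required interpolation identity for the $\tH$ scale.

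It then remains only to verify that, as $(s_0,s_1)$ ranges over the admissible pairs of Corollary \ref{cor:int} (with $s_0\leq s_1$), the pairs $(t_0,t_1) = (-s_0,-s_1)$ exhaust the index set claimed for the $\tH$ scale: $\R\mapsto\R$ in the Lipschitz case and $[0,\infty)\mapsto(-\infty,0]$ in the locally uniform case. Since the definition of interpolation scale in Remark \ref{rem:scales} quantifies over all ordered endpoint pairs, the reversal of ordering ($t_0\geq t_1$) induced by the sign change is immaterial; alternatively one may invoke $(X_0,X_1)_{\theta,2}=(X_1,X_0)_{1-\theta,2}$ from Theorem \ref{thm:interp}(ii). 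There is no genuine obstacle: the whole content lives in Corollaries \ref{cor:int} and \ref{cor:duall}, and the only points requiring care are the bookkeeping of the sign flip, the matching of the index ranges, and the observation that---exactly as for the $H^s$ scale---only equivalence, not equality, of norms is being asserted.
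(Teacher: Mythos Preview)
Your proposal is correct and is exactly the argument the paper intends: the text simply states that Corollary~\ref{cor:tildeint} follows by combining Corollary~\ref{cor:duall} with Corollary~\ref{cor:int}, and you have carried out precisely this combination, including the sign-flip bookkeeping on the index ranges.
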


Except in the case $\Omega=\R^n$, it appears that $\{\tH^s(\Omega):s\in \R\}$ is not an exact interpolation scale. Example \ref{ex:tildeinterval} below shows, for the simple one-dimensional case $\Omega=(0,1)$, that $\{\tH^s(\Omega): 0\leq s\leq 1\}$ is not an exact interpolation scale, using a representation for the norm for interpolation between $L^2(\Omega)=\tH^0(\Omega)$ and $\tH^1(\Omega)$ given in the following lemma that illustrates the abstract Theorem \ref{thm:compact} (cf., \cite[Chapter 8]{Kress}). For the cusp domain example of Lemma \ref{lem:CuspNotIntp}, by Lemma \ref{lem:CuspNotIntp} and Corollary \ref{cor:duall}, $\{\tH^s(\Omega):-2\leq s\leq 0\}$ is not an interpolation scale at all.
\begin{lem}
\label{ex:tildeLipschitz}
Let $\Omega$ be bounded and set $H_0:=\tH^0(\Omega)=L^2(\Omega)$, $H_1:=\tH^1(\Omega)=H^1_0(\Omega)$. Then $\overline{H}:=(H_0,H_1)$ satisfies the assumptions of Theorem \ref{thm:compact}, since the embedding of $H_0^1(\Omega)$ into $L^2(\Omega)$ is compact. The orthogonal basis for $H_1$, $\{\phi_j:j\in \N\}$, of eigenvectors of $T$ (with $\lambda_j$ the eigenvalue corresponding to $\phi_j$ and $\|\phi_j\|_{H_0} = 1$), is a basis of eigenfunctions of the Laplacian. [This follows since $T\phi=\lambda \phi$, for $\lambda>0$ and $\phi\in H_1=H_0^1(\Omega)$, if and only if
\begin{equation} \label{eq:weak}
\int_{\Omega} \left(\nabla \phi \cdot \nabla \bar \psi - \rho \phi\bar\psi\right)\, \rd x=0, \quad \mbox{for } \psi\in H_0^1(\Omega),
\end{equation}
with $\rho = \lambda^{-1}-1$. In turn, by local elliptic regularity, \eqref{eq:weak} holds if and only if $\phi\in H_0^1(\Omega)\cap C^2(\Omega)$ and $-\Delta \phi = \rho \phi$ in $\Omega$ (in a
classical sense).]
From Theorem \ref{thm:compact}, the interpolation norm on $\overline{H}_s$ is
\begin{equation} \label{th:rangeLap}
\|\phi\|_{\overline{H}_s}=\|\phi\|_{s}^*
:= \bigg(\sum_{j=1}^\infty \lambda_j^{-s} |a_j|^2\bigg)^{1/2}
=\bigg(\sum_{j=1}^\infty (1+\rho_j)^{s} |a_j|^2\bigg)^{1/2}, \quad \mbox{for } 0<s<1 \mbox{ and } \phi\in \overline H_s,
\end{equation}
where, for $j\in \N$, $\rho_j := \lambda_j^{-1}-1$ and $a_j:= \int_\Omega \phi \bar \phi_j \rd x$. Further, $\|\phi\|_{\tH^j(\Omega)} = \|\phi\|_j^*$ for $\phi\in H_j = \tH^j(\Omega)$ and $j=0,1$. Moreover, by Corollary \ref{cor:tildeint}, if $\Omega$ is Lipschitz,
$\overline H_s = \tH^s(\Omega)$ for $0<s<1$, with equivalence of norms.
\end{lem}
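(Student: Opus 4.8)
The plan is to verify that the pair $\overline H = (H_0,H_1) = (L^2(\Omega), H^1_0(\Omega))$ satisfies the hypotheses of Theorem \ref{thm:compact}, read off the eigenbasis and the norm formula directly from that theorem, identify the abstract operator $T$ with the weak Dirichlet Laplacian so as to rewrite the norm in the form \eqref{th:rangeLap}, and finally invoke Corollary \ref{cor:tildeint} for the last assertion. Since almost every ingredient is either quoted from an earlier result or is a textbook fact, the argument is mostly a matter of assembling the pieces around one short computation.

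First I would check the hypotheses of Theorem \ref{thm:compact}: that $\overline H$ is a compatible pair of Hilbert spaces with $H_1$ densely and compactly embedded in $H_0$. Density is immediate, since $\scrD(\Omega)$ is dense in $L^2(\Omega)$ and, by definition, in $H^1_0(\Omega)$; compactness of $H^1_0(\Omega)\hookrightarrow L^2(\Omega)$ is the Rellich--Kondrachov theorem, valid because $\Omega$ is bounded. I would also record that, transporting the $\tH^1(\Omega)$ inner product to $H^1_0(\Omega)$ via the unitary restriction operator $\mathcal{R}$ and using \eqref{eq:normint} with $m=1$, the inner product $(\cdot,\cdot)_{H_1}$ is precisely $(u,v)_{H_1}=\int_\Omega(u\bar v + \nabla u\cdot\nabla\bar v)\,\rd x$, while $(\cdot,\cdot)_{H_0}$ is the $L^2(\Omega)$ inner product. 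This identification is the point on which the whole computation turns.

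With the hypotheses in hand, Theorem \ref{thm:compact} supplies the orthogonal eigenbasis $\{\phi_j\}$ of $T$, the eigenvalues $\lambda_j$, the norm identities $\|\phi\|_j^*=\|\phi\|_{H_j}$ for $j=0,1$, and the interpolation norm $\|\phi\|_{\overline H_s} = (\sum_j \lambda_j^{-s}|a_j|^2)^{1/2}$ with $a_j=(\phi,\phi_j)_{H_0}=\int_\Omega\phi\bar\phi_j\,\rd x$. To obtain the second form in \eqref{th:rangeLap} and the identification with the Laplacian, I would substitute the two inner products from the previous step into the defining relation $(T\phi,\psi)_{H_1}=(\phi,\psi)_{H_0}$: a one-line rearrangement shows that $T\phi=\lambda\phi$ is equivalent to \eqref{eq:weak} with $\rho=\lambda^{-1}-1$, both amounting to $\int_\Omega\nabla\phi\cdot\nabla\bar\psi\,\rd x=\rho\int_\Omega\phi\bar\psi\,\rd x$ for all $\psi\in H^1_0(\Omega)$. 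Then $\lambda_j^{-s}=(1+\rho_j)^s$ converts the first expression in \eqref{th:rangeLap} into the second. Passing from the weak equation \eqref{eq:weak} to a genuine eigenfunction of $-\Delta$ is the one analytic step, and there I would simply invoke interior elliptic regularity to conclude $\phi\in H^1_0(\Omega)\cap C^2(\Omega)$ with $-\Delta\phi=\rho\phi$ pointwise in $\Omega$. The closing claim, that $\overline H_s=\tH^s(\Omega)$ with equivalent norms when $\Omega$ is Lipschitz, is then immediate from Corollary \ref{cor:tildeint}.

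The main obstacle here is conceptual rather than technical: one must be careful that the inner product carried by $H_1=\tH^1(\Omega)$ (inherited from $H^1(\R^n)$) really does reduce, under $\mathcal{R}$, to the standard Dirichlet form on $H^1_0(\Omega)$, since it is exactly this identification that makes $T$ the resolvent-type operator associated with $-\Delta$ and hence forces the $\phi_j$ to be Dirichlet eigenfunctions. Everything else is bookkeeping around Theorem \ref{thm:compact} together with a routine appeal to elliptic regularity.
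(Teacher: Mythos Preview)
Your proposal is correct and follows essentially the same route as the paper: the paper's argument is itself embedded in the statement (the bracketed remarks), verifying compactness of the embedding, computing that $T\phi=\lambda\phi$ is equivalent to \eqref{eq:weak} with $\rho=\lambda^{-1}-1$, invoking local elliptic regularity, and then reading off the norm formula from Theorem~\ref{thm:compact} and the final clause from Corollary~\ref{cor:tildeint}. Your explicit identification of the $H_1$ inner product via $\mathcal{R}$ and \eqref{eq:normint} is a welcome clarification of a step the paper leaves implicit.
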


\subsection{One-dimensional examples and counterexamples}
Our first example, Lemma~\ref{lem:segment}, which illustrates that $\{H^s(\Omega):0\leq s\leq 2\}$ needs not be an exact interpolation scale, requires explicit values for the $H^1(\Omega)$ and $H^2(\Omega)$ norms, for $\Omega = (0,a)$ with $a>0$.
These norms are computed using the minimal extension operator $\mathcal E_s:H^s(\Omega)\to H^s(\R)$ for $s=1,2$.
\begin{lem}\label{lem:H1H2}
For $\Omega=(0,a)\subset\R$, with $a>0$, the $H^1(\Omega)$ and $H^2(\Omega)$ norms are given by
\begin{align} \label{eq:H1norm0a}
\|\phi\|_{H^1(\Omega)}^2 =&
|\phi(0)|^2 + |\phi(a)|^2 + \int_{0}^a\left(|\phi|^2+ |\phi^\prime|^2\right) \,\rd x,\\
\|\phi\|_{H^2(\Omega)}^2=
&|\phi(0)|^2+|\phi'(0)|^2+|\phi(0)-\phi'(0)|^2+
|\phi(a)|^2+|\phi'(a)|^2+|\phi(a)-\phi'(a)|^2
\nonumber\\
&+\int_0^a (|\phi|^2+2|\phi'|^2+|\phi''|^2) \,\rd x.
\label{eq:H2norm0a}
\end{align}
\end{lem}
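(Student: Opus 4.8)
The plan is to compute each norm directly from its definition as the energy of the minimal extension. Recall from the excerpt that $\|\phi\|_{H^m(\Omega)} = \inf\{\|U\|_{H^m(\R)} : U|_\Omega = \phi\}$ and that this infimum is attained by the minimal extension $U=\mathcal{E}_m\phi$. Specialising the identity \eqref{eq:normint} to $n=1$ (where $\binom{|\alpha|}{\alpha}=1$), the relevant functional is $\|U\|_{H^m(\R)}^2 = \sum_{k=0}^m \binom{m}{k}\|U^{(k)}\|_{L^2(\R)}^2$, for $m=1,2$. First I would split $\R$ into $(-\infty,0)$, $\Omega=(0,a)$ and $(a,\infty)$. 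On $\Omega$ the integrand is fixed, contributing exactly $\int_0^a \sum_{k=0}^m \binom{m}{k}|\phi^{(k)}|^2\,\rd x$, which is the interior term in \eqref{eq:H1norm0a} and \eqref{eq:H2norm0a}. Since any admissible $U$ lies in $H^m(\R)\subset C^{m-1}(\R)$, its derivatives of order $<m$ are continuous across the endpoints, so the two exterior integrals decouple and each is minimised subject only to the prescribed Cauchy data $U^{(j)}(0)=\phi^{(j)}(0)$ and $U^{(j)}(a)=\phi^{(j)}(a)$, $0\le j\le m-1$ (these traces being well defined since $H^m((0,a))\hookrightarrow C^{m-1}([0,a])$ in one dimension).

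Next I would solve the two exterior minimisation problems. Each is a strictly convex quadratic functional, so its unique minimiser is the decaying solution of the Euler--Lagrange equation $\sum_{k=0}^m(-1)^k\binom{m}{k}U^{(2k)}=0$, i.e.\ $(1-\rd^2/\rd x^2)^m U=0$, whose characteristic polynomial $(1-\lambda^2)^m$ has roots $\lambda=\pm 1$ of multiplicity $m$. On $(-\infty,0)$ the decaying solution is therefore $U(x)=p(x)\re^{x}$ with $p$ a polynomial of degree $\le m-1$, its $m$ coefficients determined by the Cauchy data at $0$; on $(a,\infty)$ it is the mirror image. For $m=1$ this gives $U(x)=\phi(0)\re^{x}$ on the left, whose energy $\int_{-\infty}^0(|U|^2+|U'|^2)\,\rd x$ evaluates to $|\phi(0)|^2$, the right endpoint contributing $|\phi(a)|^2$, yielding \eqref{eq:H1norm0a}. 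For $m=2$ the left solution is $U(x)=(\phi(0)+(\phi'(0)-\phi(0))x)\re^{x}$, and a short computation of $\int_{-\infty}^0(|U|^2+2|U'|^2+|U''|^2)\,\rd x$ gives $|\phi(0)|^2+|\phi'(0)|^2+|\phi(0)-\phi'(0)|^2$, with the analogous term at $a$; combined with the interior integral this is \eqref{eq:H2norm0a}.

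A convenient shortcut for the energy evaluation, which I would use in place of grinding out the elementary integrals $\int_{-\infty}^0 x^k\re^{2x}\,\rd x$, is integration by parts: repeated integration by parts turns $\int_{-\infty}^0\sum_k\binom{m}{k}|U^{(k)}|^2\,\rd x$ into $\int_{-\infty}^0 \overline{U}\,\sum_k(-1)^k\binom{m}{k}U^{(2k)}\,\rd x$ plus boundary terms at $0$, and the volume integral vanishes because $U$ solves the Euler--Lagrange equation. Hence the exterior energy is a Hermitian quadratic form in the Cauchy data $(\phi^{(j)}(0))_{j<m}$, read off from the boundary terms; this also makes the complex case transparent, with no need to treat real and imaginary parts separately. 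The main point requiring care is the decoupling step: one must justify both that an admissible competitor is pinned down at the endpoints to the traces of $\phi$ (so that the interior term is genuinely fixed and the two sides are independent) and that the constructed piecewise function, which matches derivatives up to order $m-1$ but in general jumps in the $m$-th derivative, really lies in $H^m(\R)$ and realises the infimum; both follow from one-dimensional trace theory together with the strict convexity of the functional.
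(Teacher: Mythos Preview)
Your proposal is correct and follows essentially the same approach as the paper: both compute the minimal $H^m(\R)$ extension by solving the Euler--Lagrange equation $(1-\rd^2/\rd x^2)^m U=0$ on each half-line with the Cauchy data inherited from the $C^{m-1}$ embedding, obtaining the same explicit extensions $\phi(0)\re^x$ (for $m=1$) and $(\phi(0)+(\phi'(0)-\phi(0))x)\re^x$ (for $m=2$) on $(-\infty,0)$ and their mirror images on $(a,\infty)$. Your integration-by-parts shortcut for reading off the exterior energy as a boundary form is a mild streamlining of the direct integral evaluation the paper performs, but the argument is otherwise identical.
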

\begin{proof}
By the definitions \eqref{eq:Hsdef} and \eqref{eq:normint},
$\|\phi\|^2_{H^1(\Omega)}=\|\mathcal E_1\phi\|^2_{H^1(\R)}
=\int_\R (|\mathcal E_1\phi|^2+|(\mathcal E_1\phi)'|^2)\rd x$, where the extension $\mathcal E_1\phi$ of $\phi\in H^1(\Omega)$ with minimal $H^1(\R)$ norm is computed as an easy exercise in the calculus of variations, recalling that $H^1(\R)\subset C^0(\R)$, to be
\begin{align*}
\mathcal E_1 \phi(x)=\begin{cases}
\phi(0)\,\re^x, & x\le0,\\
\phi(x), & 0<x<a,\\
\phi(a)\,\re^{a-x}, &x\ge a.
\end{cases}\end{align*}
The assertion~\eqref{eq:H1norm0a} follows by computing $\int_\R (|\mathcal E_1\phi|^2+|(\mathcal E_1\phi)'|^2)\rd x$.

Similarly, for $\phi\in H^2(\Omega)$, $\|\phi\|_{H^2(\Omega)}= \|\mathcal{E}_2\phi\|_{H^2(\R)}$ and $\mathcal E_2\phi$ is computed by minimizing the functional
$$\mathcal J_2(\psi)=\|\psi\|^2_{H^2(\R)}=\int_\R(1+\xi^2)^2|\hat \psi|^2\rd\xi
= \int_\R (|\psi|^2+2|\psi'|^2+|\psi''|^2)\rd x$$
under the constraint $\psi|_\Omega=\phi$.
By computing the first variation of the functional $\mathcal J_2$ and integrating by parts, we see that $\psi$ solves the differential equation $\psi''''-2\psi''+\psi=0$ (whose solutions are $e^x,e^{-x},xe^x,xe^{-x}$) in the complement of $\Omega$, and, recalling that $H^2(\R)\subset C^1(\R)$, we obtain
\begin{align*}\mathcal E_2\phi(x) =
\begin{cases}
xe^x\phi'(0)+(1-x)e^x\phi(0), & x\le 0,\\
\phi(x), & 0<x<a,\\
(x-a)e^{a-x}\phi'(a)+(1-a+x)e^{a-x}\phi(a), & x\ge a.
\end{cases}
\end{align*}
The assertion \eqref{eq:H2norm0a} is obtained by computing  $\int_\R (|\mathcal E_2\phi|^2 + 2|(\mathcal E_2\phi )'|^2+|(\mathcal E_2\phi )''|^2)\rd x$.
\end{proof}

\begin{lem} \label{lem:segment} If $\Omega=(0,a)$, with $a>0$, then $\{H^s(\Omega):0\leq s\leq 2\}$ is not an exact interpolation scale. In particular, where $\overline H_\theta := (L^2(\Omega),H^2(\Omega)_\theta$, it holds that $\overline H_\theta = H^{2\theta}(\Omega)$, for $0<\theta<1$, but
$\|1\|_{\overline H_{1/2}} \neq \|1\|_{H^{1}(\Omega)}$. Precisely,
\begin{equation} \label{ratio}
\frac{\|1\|_{\overline H_{1/2}}}{\|1\|_{H^{1}(\Omega)}} \leq \left(\frac{a^2+4a}{a^2+4a+4}\right)^{1/4} < \min(a^{1/4},1).
\end{equation}
\end{lem}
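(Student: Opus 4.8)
The plan is to first identify the interpolation space explicitly and then establish non-exactness by exhibiting a single function whose interpolation norm differs from its intrinsic norm, the natural candidate being the constant function $\phi\equiv 1$.

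Since $\Omega=(0,a)$ is a bounded open interval it is a Lipschitz open set, so Corollary~\ref{cor:int} tells us that $\{H^s(\Omega):s\in\R\}$ is an interpolation scale. Applying this with endpoints $s_0=0$ and $s_1=2$ (in the sense of Remark~\ref{rem:scales}) gives $\overline H_\theta=(L^2(\Omega),H^2(\Omega))_\theta=H^{2\theta}(\Omega)$ for $0<\theta<1$, which is the first assertion; in particular $\overline H_{1/2}$ and $H^1(\Omega)$ coincide as sets with equivalent norms, so $1\in\overline H_{1/2}$ and it makes sense to compare the two norms of $1$.

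The key observation is that one need not compute $\|1\|_{\overline H_{1/2}}$ exactly: the one-sided upper bound of Lemma~\ref{intnormub}(i), applied with $\theta=1/2$, $X_0=L^2(\Omega)$ and $X_1=H^2(\Omega)$ (noting $1\in\Delta(\overline H)=H^2(\Omega)$), already suffices and yields
$$
\|1\|_{\overline H_{1/2}}\le \|1\|_{L^2(\Omega)}^{1/2}\,\|1\|_{H^2(\Omega)}^{1/2}.
$$
I would then evaluate the three intrinsic norms of $\phi\equiv 1$ directly from the explicit formulas of Lemma~\ref{lem:H1H2}: substituting $\phi'=\phi''=0$ and the boundary data $\phi(0)=\phi(a)=1$, $\phi'(0)=\phi'(a)=0$ gives $\|1\|_{L^2(\Omega)}^2=a$, and $\|1\|_{H^2(\Omega)}^2=a+4$ from \eqref{eq:H2norm0a}, while $\|1\|_{H^1(\Omega)}^2=a+2$ from \eqref{eq:H1norm0a}.

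Combining, $\|1\|_{\overline H_{1/2}}\le (a(a+4))^{1/4}=(a^2+4a)^{1/4}$, whereas $\|1\|_{H^1(\Omega)}=(a+2)^{1/2}=(a^2+4a+4)^{1/4}$, and dividing produces exactly the claimed ratio
$$
\frac{\|1\|_{\overline H_{1/2}}}{\|1\|_{H^1(\Omega)}}\le\left(\frac{a^2+4a}{a^2+4a+4}\right)^{1/4}.
$$
It then remains only to verify two elementary inequalities for the fraction: it is $<1$ because $4>0$, and $<a$ because $a(a^2+4a+4)-(a^2+4a)=a^2(a+3)>0$ for $a>0$; together these give the bound by $\min(a^{1/4},1)$. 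Finally, since this right-hand side is strictly below $1$, the upper bound forces $\|1\|_{\overline H_{1/2}}<\|1\|_{H^1(\Omega)}$, so the two norms are unequal and $\{H^s(\Omega):0\le s\le 2\}$ is not an exact interpolation scale. I do not anticipate a genuine obstacle here; the only real subtlety is recognising that the cheap estimate of Lemma~\ref{intnormub}(i) is strong enough to separate the two norms, so that no exact evaluation of the interpolation norm (e.g.\ via Theorem~\ref{thm:compact}) is needed.
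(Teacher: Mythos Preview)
Your proposal is correct and follows essentially the same approach as the paper: compute $\|1\|_{L^2(\Omega)}^2=a$, $\|1\|_{H^1(\Omega)}^2=a+2$, $\|1\|_{H^2(\Omega)}^2=a+4$ via Lemma~\ref{lem:H1H2}, then bound $\|1\|_{\overline H_{1/2}}$ by Lemma~\ref{intnormub}(i). You add a little more detail than the paper (explicitly invoking Corollary~\ref{cor:int} for the identification $\overline H_\theta=H^{2\theta}(\Omega)$ and verifying the elementary inequalities for the final $\min$ bound), but the argument is the same.
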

\begin{proof}
The inequality \eqref{ratio} follows from Lemma~\ref{lem:H1H2} and Lemma~\ref{intnormub}(i), which give that
\begin{align*}
\|1\|^2_{L^2(\Omega)}= a,\;\;
\|1\|^2_{H^1(\Omega)}= 2+a,\;\;
\|1\|^2_{H^2(\Omega)}= 4+a,\;\;
\|1\|^2_{\overline H_{1/2}(\Omega)}
\le \|1\|_{L^2(\Omega)}\|1\|_{H^2(\Omega)}
=\sqrt{a^2+4a}.
\end{align*}

\vspace*{-3ex}
\end{proof}

Lemma \ref{lem:segment} shows that, for the regular domain $\Omega=(0,a)$, the spaces $H^s(\Omega)$ are not an \emph{exact} interpolation scale, and that the ratio \eqref{ratio} between the interpolation norm and the $H^s(\Omega)$ norm can be arbitarily small. (However, the two norms are equivalent:
Corollary \ref{cor:int} shows that $H^s(\Omega)$ constitutes an interpolation scale in this case.)
The next example provides an irregular open set for which $\{H^s(\Omega):0\le s\le2\}$ is {\em not} an interpolation scale so that, by Corollary \ref{cor:duall}, also $\{\tH^s(\Omega):-2\le s\le0\}$ is not an interpolation scale

\begin{example} \label{ex:1Dfrac}
Let $\underline a:=(a_1, a_2, \ldots)$ be a real sequence satisfying
$a_1 := 1$, $0< a_{n+1} < a_n/4$, $n\in\N$,
and let
$\Omega := \bigcup_{n=1}^\infty (a_n/2,a_n)\subset(0,1)$.
Let $H_0:=L^2(\Omega)$, $H_1:=H^2(\Omega)$, $\overline H:=(H_0,H_1)$, and $\overline H_{1/2}:=(H_0,H_1)_{1/2}$.
We note first that if $u\in H^1(\R)$ then, by standard Sobolev embedding results \cite[p.~97]{Adams}, $u\in C^0(\R)$, so $u|_\Omega\in L^\infty(\Omega)$ and $H^1(\Omega)\subset L^\infty(\Omega)$.
We will see that there is a choice of the sequence $\underline a=(a_1,a_2,...)$ such that $\overline H_{1/2}\not\subset L^\infty(\Omega)$ so that $\overline H_{1/2}\neq H^1(\Omega)$.

To see this, choose an even function $\chi\in C^\infty(\R)$ such that $\chi(t) = 0$ for $|t|>1$ and $\chi(0)=1$,
and consider the sequence of functions in $H_1 \subset \overline H_{1/2}\cap H^1(\Omega)$ defined by
$$
\phi_n(t) = \left\{\begin{array}{cc}
1, & t\in [0,a_n]\cap \Omega, \\
0, & t\in (a_n,\infty) \cap \Omega,\end{array}\right.
$$
for $n\in\N$. Clearly
$$
\|\phi_n\|_{H_0} \leq a_n^{1/2}\qquad \text{and}\qquad
\|\phi_n\|_{H_1} = \inf_{\psi\in H^2(\R), \, \psi|_\Omega = \phi_n} \|\psi\|_{H^2(\R)} \leq \|\psi_n\|_{H^2(\R)},
$$
where $\psi_n\in C^1(\R)\cap H^2(\R)$ is defined by
$$
\psi_n(t) = \left\{\begin{array}{cc}
\chi(t), & t<0, \\
1, & 0\leq t\leq a_n, \\
\chi((t-a_n)/b_n), & t>a_n,   \end{array}\right.
$$
with $b_1:=1$ and $b_n := a_{n-1}/2-a_{n}$, for $n\geq 2$.
Further, where $\alpha:=\|\chi\|_{H^2(\R)}$,
$$
\|\psi_n\|^2_{H^2(\R)} = \int_{-\infty}^\infty \left(|\psi_n|^2 + 2|\psi^\prime_n|^2 + |\psi^{\prime\prime}_n|^2\right) \, \rd t
= \frac{\alpha}{2} + a_n + \int_{0}^\infty \left(b_n|\chi(r)|^2 + 2b_n^{-1}|\chi^\prime(r)|^2 + b_n^{-3}|\chi^{\prime\prime}(r)|^2\right) \, \rd r,
$$
and, for $n\geq 2$, $a_n\leq 1/2$, $1/2\geq b_{n} \geq a_{n-1}/4$, so that
$$
\|\psi_n\|^2_{H^2(\R)} \leq \frac{1}{2} \left(1 + \left(1+b_n^{-3}\right)\alpha\right)
\leq \frac{1}{2} \left(1 + \left(1+64 a_{n-1}^{-3}\right)\alpha\right).
$$
Applying Lemma \ref{intnormub}(i) we see that, for $n\geq 2$,
$$
\|\phi_n\|_{\overline H_{1/2}} \leq \|\phi_n\|_{H_0}^{1/2}\|\phi_n\|_{H_1}^{1/2} \leq 2^{-1/4}a_n^{1/4} \left(1 + \left(1+64 a_{n-1}^{-3}\right)\alpha\right)^{1/4}.
$$
Now choosing $a_n$ according to the rule
$$
a_1 = 1, \quad a_n = \frac{a_{n-1}}{4}\left(1 + \left(1+64 a_{n-1}^{-3}\right)\alpha\right)^{-1} <  \frac{a_{n-1}}{4}, \quad n = 2,3,\ldots,
$$
it follows that $a_n \leq 4^{-n}$ and that $\|\phi_n\|_{\overline H_{1/2}} \leq 2^{-1/4}4^{-n/4}
\le (\sqrt{2})^{-n}\to 0$ as $n\to\infty$.
In fact $\phi_n\to 0$ so rapidly that
$\sum_{n=1}^\infty \phi_n$
is convergent in $\overline H_{1/2}$ 
to a limit $\Phi\in \overline H_{1/2}$.
This limit is not in $H^1(\Omega)$ as $\Phi\not\in L^\infty(\Omega)$:
explicitly, $\Phi(t) = n$, for $a_n/2<t<a_n$.
\end{example}

Our last example uses the results of Lemma \ref{ex:tildeLipschitz}, and shows that $\{\tH^s(0,1):0\leq s\leq 1\}$ is not an exact interpolation scale by computing values of the Sobolev and interpolation norms for specific functions.
This example also demonstrates that no normalisation 
of the interpolation norm can make the two norms equal.
\begin{example}
\label{ex:tildeinterval}
Let $\Omega=(0,1)$, $H_0=\tH^0(\Omega)=L^2(\Omega)$ and $H_1=\tH^1(\Omega)=H^1_0(\Omega)$.
The eigenfunctions and eigenvalues in Lemma \ref{ex:tildeLipschitz} are $\phi_j(x)=\sqrt{2}\sin(j\pi  x)$ and $\rho_j = j^2\pi^2$, so that, for $0<\theta<1$, the interpolation norm on $\overline H_\theta=\tH^\theta (\Omega)$ is given by \eqref{th:rangeLap}. In particular,
\begin{align*}\label{}
\|\phi_j\|^*_\theta = (1+ j^2\pi^2)^{\theta/2}, \quad \mbox{for }j\in \N.
\end{align*}
Noting that
\begin{align*}
\label{}
\hat{\phi_j}(\xi) = \frac{1}{\sqrt{\pi}}\int_{0}^1 \sin(j\pi x) \re^{-\ri \xi x} \,\rd x
= \frac{j\sqrt{\pi}\left(1-(-1)^j\re^{-\ri\xi} \right)}{j^2\pi^2-\xi^2}
= \frac{2j\sqrt{\pi}\re^{-\ri \xi/2}}{j^2\pi^2-\xi^2}
\begin{cases}
\cos\xi/2, & j \textrm{ odd}, \\
\ri\sin\xi/2, & j \textrm{ even},
\end{cases}
\end{align*}
it holds that
\begin{align*}\label{}
\|\phi_j\|_{\tH^\theta(\Omega)}
=\left(\int_{-\infty}^\infty (1+\xi^2)^\theta|\hat{\phi_j}(\xi)|^2 \,\rd \xi\right)^{1/2}
=2j\sqrt{2\pi}\left(\int_{0}^\infty \frac{(1+\xi^2)^\theta}{(j^2\pi^2-\xi^2)^2}\left\lbrace \begin{array}{ll}
\cos^2(\xi/2)\\
\sin^2(\xi/2)
\end{array}\right\rbrace \,\rd \xi\right)^{1/2}.
\end{align*}
A comparison of $\|\phi_j\|^*_{\theta}$ and $\|\phi_j\|_{\tH^\theta(\Omega)}$ for $j=1,2$ and $\theta\in(0,1)$ is shown in Figure \ref{fig:norms}(a). It is clear from Figure \ref{fig:norms}(a) that the interpolation and Sobolev norms do not coincide in this case. In particular, for $\theta=1/2$ we have
\begin{align*}\label{}
\|\phi_1\|^*_{1/2}\approx 1.816, \quad
\|\phi_1\|_{\tH^{1/2}(\Omega)}\approx 1.656, \qquad
\|\phi_2\|^*_{1/2} \approx 2.522, \quad
\|\phi_2\|_{\tH^{1/2}(\Omega)} \approx   2.404.
\end{align*}

\begin{figure}[!t] 
\subfigure{\includegraphics[width=80mm]{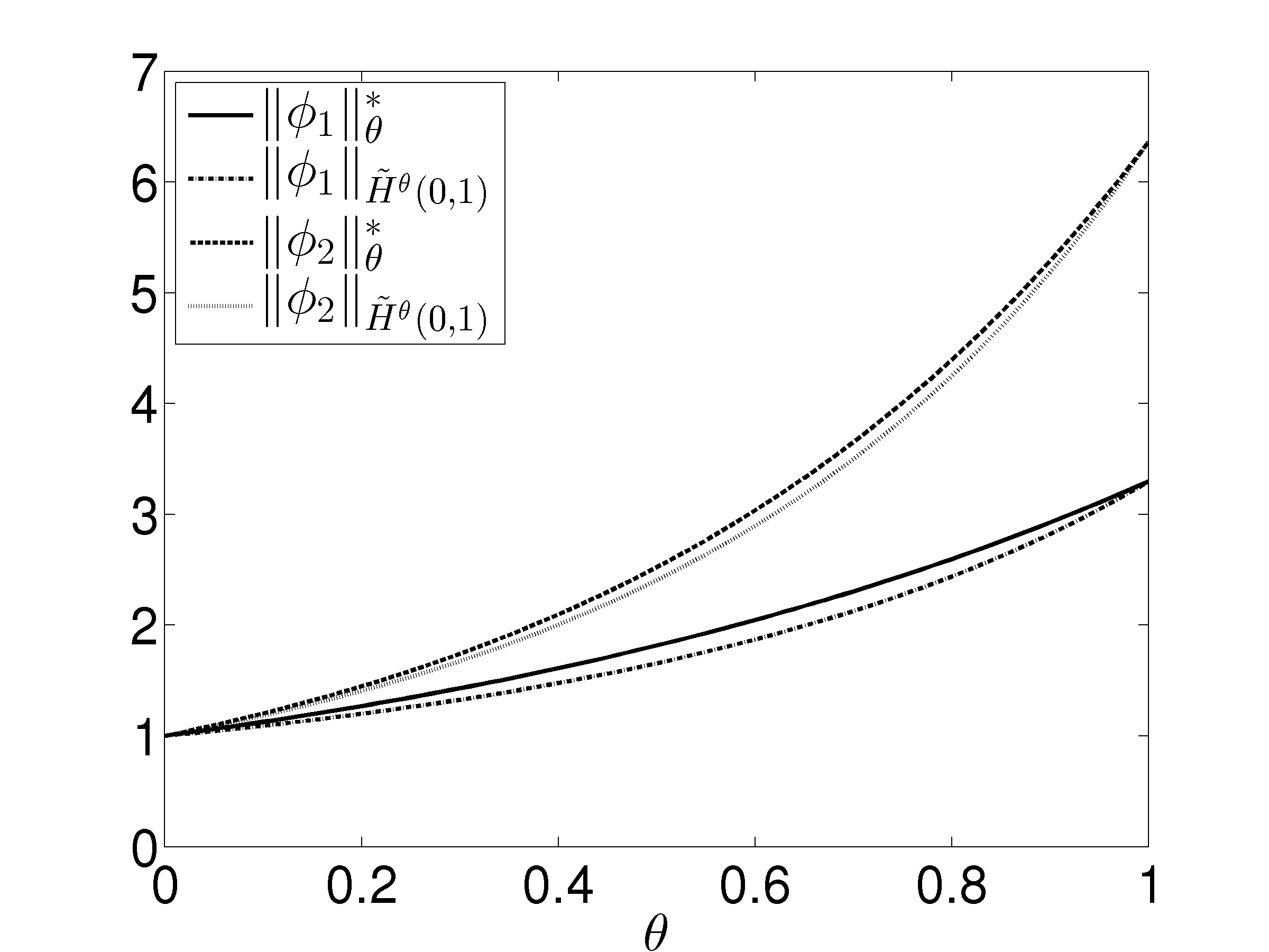}} \hs{-10}
\subfigure{\includegraphics[width=80mm]{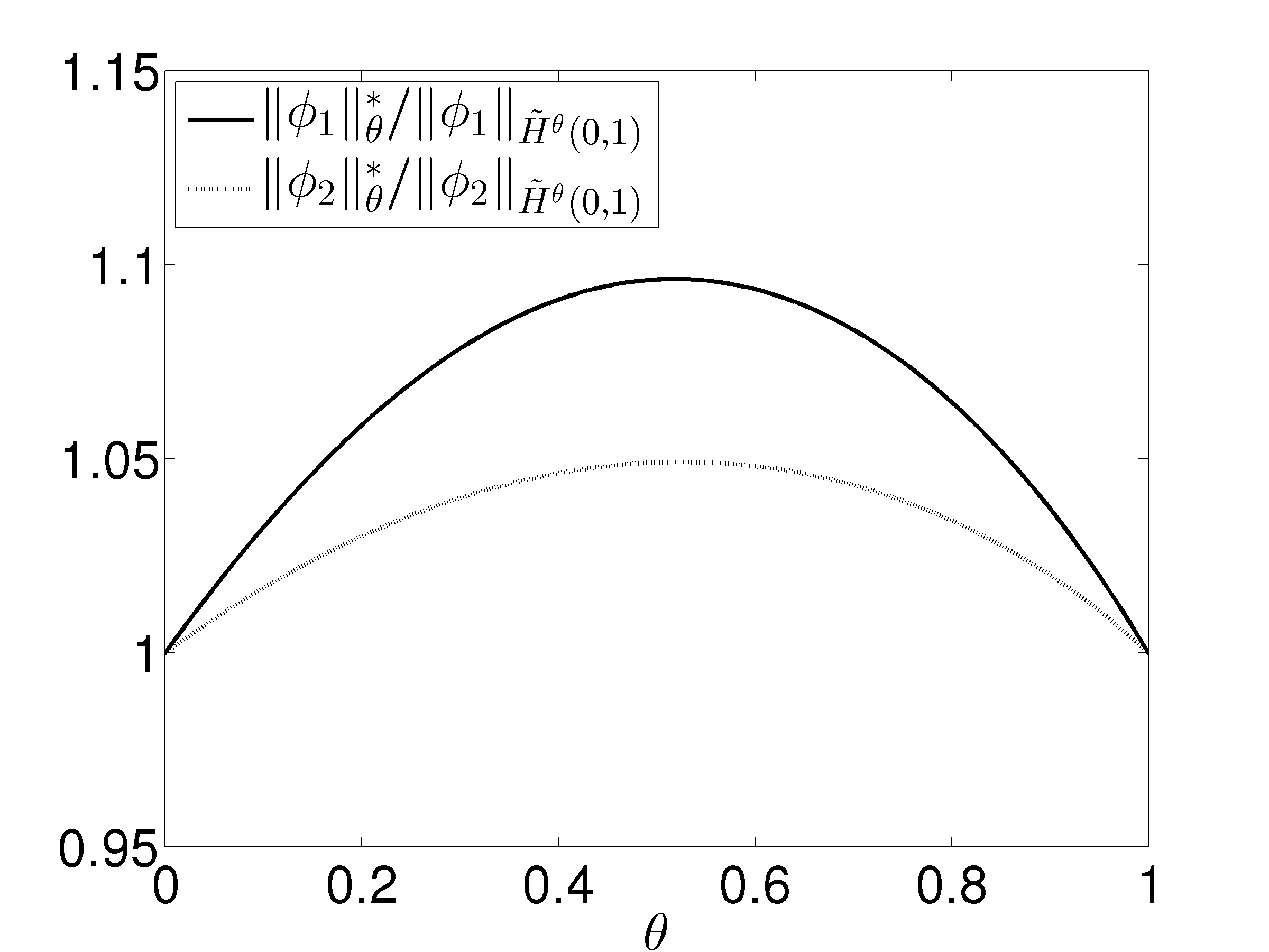}}
\caption{Comparison of Sobolev and interpolation norms in $\tH^\theta (\Omega)$, 
 for the functions $\phi_1$ and $\phi_2$ of Example \ref{ex:tildeinterval}.}
\label{fig:norms}
\end{figure}

The ratio between the two norms is plotted for both $\phi_1$ and $\phi_2$ in Figure \ref{fig:norms}(b). In particular,
\begin{align*}\label{}
\|\phi_1\|^*_{1/2}/\|\phi_1\|_{\tH^{1/2}(\Omega)}\approx 1.096, \qquad
\|\phi_2\|^*_{1/2}/\|\phi_2\|_{\tH^{1/2}(\Omega)} \approx 1.049.
\end{align*}
 As the values of these two ratios are different, not only are the two norms not equal with the normalisation \eqref{eq:Nchoice} we have chosen, it is clear that there is no possible choice of normalisation factor in the definition \eqref{eq:normK} that could make the interpolation and Sobolev norms equal.
\end{example}


\end{document}